\newcommand{\GL}{\ensuremath \mathrm{GL}}
\newcommand{\GO}{\ensuremath \mathrm{GO}}
\newcommand{\SO}{\ensuremath \mathrm{SO}}
\newcommand{\Sp}{\ensuremath \mathrm{Sp}}
\newcommand{\SL}{\ensuremath \mathrm{SL}}
\newcommand{\St}{\ensuremath \mathrm{St}}
\newcommand{\tr}{\ensuremath \mathrm{tr}}
\newcommand{\F}{\ensuremath \mathbb{F}}
\newcommand{\Irr}{\ensuremath \mathrm{Irr}}
\newcommand{\maop}[1]{%
\ensuremath{\mathop{\operator@font #1}\nolimits}}
\newcommand{\maopl}[1]{%
\ensuremath{\mathop{\operator@font #1}\limits}}
\newcommand{\infl}{\maop{Infl}}
\newcommand{\R}[2]{\maop{R}^{#1}_{#2}}
\newcommand{\downa}[2]{\smash{\maop{\downarrow}^{#1}_{#2}}}
\newcommand{\upa}[2]{\smash{\maop{\uparrow}^{#1}_{#2}}}
\newcommand{\mbv}{\ensuremath \mathbf{v}}
\newcommand{\mbw}{\ensuremath \mathbf{w}}
\newcommand{\mbu}{\ensuremath \mathbf{u}}
\newcommand{\mbs}{\ensuremath \mathbf{s}}
\newcommand{\mbx}{\ensuremath \mathbf{x}}
\newcommand{\mbz}{\ensuremath \mathbf{z}}
\theoremstyle{plain}
\newtheorem{theorem}{Theorem}[section]
\newtheorem{proposition}[theorem]{Proposition}
\newtheorem{lemma}[theorem]{Lemma}
\newtheorem{corollary}[theorem]{Corollary}
\theoremstyle{definition}
\theoremstyle{remark}
\newtheorem{remark}[theorem]{Remark}
\date{\today}
\author{Frank Himstedt and Felix Noeske}
\title{Restricting unipotent characters in special orthogonal groups}
\begin{document}
\maketitle
\begin{abstract}
For all prime powers $q$ we restrict the unipotent characters of the
special orthogonal groups $\SO_5(q)$ and $\SO_7(q)$ to a maximal
parabolic subgroup. We determine all irreducible constituents of these
restrictions for $\SO_5(q)$ and a large part of the irreducible
constituents for $\SO_7(q)$. 
\end{abstract}

\section{Introduction}

Among the ordinary irreducible characters of a finite group $G$ of Lie
type the unipotent characters possess some remarkable
properties. For example, the Jordan decomposition of characters
gives a connection between the ordinary irreducible characters of $G$
and the unipotent characters of certain subgroups of the dual group.
Furthermore, if $\ell$ is a prime different from the defining
characteristic and not too small then the reductions modulo $\ell$ of
the unipotent irreducible characters form a so-called basic set for
the unipotent $\ell$-blocks of $G$, so that knowledge of the 
decomposition numbers of the unipotent irreducible characters can be
used to derive all decomposition numbers of the unipotent blocks. 

The analysis of the restriction of representations to maximal
parabolic subgroups is an important tool in the representation theory
of finite groups of Lie type. One reason for this is that maximal
parabolic subgroups are large subgroups and another one is that the
Levi decomposition in conjunction with Clifford theory often allows
for a reduction of representation theoretical questions to groups of
smaller rank. 

Let $q$ be a prime power. In this paper we study the restriction of
the unipotent irreducible characters of the special orthogonal groups 
$G = \SO_5(q)$ and $G=\SO_7(q)$ to the maximal parabolic subgroup $P$
which is defined as the stabilizer in $G$ of a one-dimensional
subspace of the natural module. The irreducible characters of the
parabolic subgroup $P$ are partitioned into Types $1$, $0$, $+$, $-$ via
Clifford theory. For $G=\SO_5(q)$ we determine all irreducible
constituents of the restrictions of the unipotent irreducible
characters of $G$ to $P$. For $G=\SO_7(q)$ we obtain complete
information on the irreducible constituents of Types $1$, $0$ and
partial information on the constituents of Types $\pm$. Our motivation
lies in the computation of the decomposition numbers of $\SO_7(q)$ in
non-defining characteristic. The results we obtain in this paper
contribute to solving this task in \cite{HimNoeDecnosSp6So7}.
For even $q$ the special orthogonal groups are
isomorphic to symplectic groups and in this case 
restrictions of representations in non-defining characteristic to
maximal subgroups were previously investigated in~\cite{GuralnickTiep} and
\cite{Schaeffer}, for example.

Based on our motivation to compute and compare the decomposition
numbers of $\Sp_6(q)$ and $\SO_7(q)$, this paper and the paper 
\cite{AnHissUnipotent} are kindred spirits. In fact, large parts of
our paper are guided by the work of An and Hiss. However, there are
some remarkable differences: For the orthogonal groups the unipotent
radical of $P$ is abelian, while it is non-abelian for the parabolic
subgroup considered in~\cite{AnHissUnipotent}. Furthermore, there are
structural differences between the inertia subgroups in the parabolic
subgroup of the orthogonal and the symplectic groups, respectively. 
And finally, we also consider even prime powers $q$. 

The paper is structured as follows: We fix notation for the orthogonal
groups in Section~\ref{sec:SO} and introduce the maximal parabolic
subgroup $P$. In Section~\ref{sec:irrP} we describe the construction
and parameterization of the irreducible characters of~$P$ via Clifford
theory. Section~\ref{sec:preind} is the technical heart of this
paper. In Theorem~\ref{thm:indnu} we obtain a description of the
restriction of those characters of $G$ which are Harish-Chandra
induced from the standard Levi subgroup of $P$. In most cases this
reduces the decomposition of the Harish-Chandra induced character to a
similar problem for smaller subgroups. In Section~\ref{sec:restrictionSt} 
we collect some general results on the restriction of the Steinberg 
character. Section~\ref{sec:charvalU} deals with the values of the
unipotent irreducible characters of the special orthogonal groups on
certain unipotent conjugacy classes. We show that these values already
determine the degrees of the components of Types~$1$, $0$, $+$ and $-$
of the restrictions of the unipotent characters to~$P$.
The main results of this paper are contained in
Sections~\ref{sec:resuniSO5} and \ref{sec:resuniSO7} where we consider
the restrictions of the unipotent irreducible characters of
$G=\SO_5(q)$ and $G=\SO_7(q)$ to the maximal parabolic subgroup
$P$. Since $\SO_7(q)$ does not have any cuspidal unipotent character
the crucial tool is the description of the restriction of Harish-Chandra
induced modules in Theorem~\ref{thm:indnu}.

\section{Special orthogonal groups}
\label{sec:SO}

In this section we collect some information on the special orthogonal
groups $\SO_n(q)$ for odd $n$ and on a maximal parabolic subgroup
$P_n$ of $\SO_n(q)$. For more information on orthogonal groups we
refer to~\cite[Chapter~11]{Taylor}. 

\subsection{Special orthogonal groups}
\label{subsec:SO}

Let $q$ be a power of a prime $p$ and $\F_q$ a finite field 
with $|\F_q|=q$. We fix a positive integer $m$. Let
$I_m \in \F_q^{m \times m}$ be the identity matrix and
$J_m \in \F_q^{m \times m}$ the matrix with ones on the 
anti-diagonal and zeros elsewhere.
We write $\mbv \in \F_q^{2m+1}$, $\mbw \in \F_q^{2m}$ as
\[
\mbv =\begin{bmatrix}v_m, \dots, v_1, v_0, v_1', \dots,
v_m'\end{bmatrix}^\tr , \quad 
\mbw =\begin{bmatrix}w_m, \dots, w_1, w_1', \dots,
w_m'\end{bmatrix}^\tr.
\]
Fix an element $\nu \in \F_q$ such that the polynomial 
$X^2+X+\nu \in \F_q[X]$ is irreducible. We define quadratic forms
$Q_{2m+1}$ on $\F_q^{2m+1}$ and $Q_{2m}^\pm$ on $\F_q^{2m}$ by
\begin{eqnarray*}
Q_{2m+1}(\mbv) & := & v_0^2 + \sum_{j=1}^m v_j v_j', \quad
Q_{2m}^+(\mbw) := \sum_{j=1}^m w_j w_j' \quad \text{and}\\
Q_{2m}^-(\mbw) & := & w_1^2 + w_1 w_1' + \nu w_1'^2 + \sum_{j=2}^m w_j w_j'.
\end{eqnarray*}
Let $\{e_m, \dots, e_1,e_0,e_1', \dots, e_m'\}$ be the standard basis
of the vector space $\F_q^{2m+1}$ and let $\{f_m, \dots, f_1, f_1', \dots, 
f_m'\}$ be the standard basis of $\F_q^{2m}$. We set
\[
J_{2m+1}' := \begin{bmatrix}. & . & J_m\\. & 2 & .\\J_m & . &
  .\end{bmatrix}, \quad J_{2m}^+ := J_{2m}, \quad 
J_{2m}^- := \begin{bmatrix}. & . & . & J_{m-1}\\. & 2 & 1 & .\\. & 1
 & 2\nu & .\\J_{m-1} & . & . & .\end{bmatrix}.
\]
Throughout the paper we usually write dots for zeros as matrix entries, or
omit them to increase legibility.
For all $\mbv, \mbv' \in \F_q^{2m+1}$ and $\mbw, \mbw' \in \F_q^{2m}$
we have 
\begin{eqnarray*}
\mbv^\tr J_{2m+1}' \mbv'  & = & Q_{2m+1}(\mbv+\mbv') - Q_{2m+1}(\mbv) - Q_{2m+1}(\mbv') ,\\
\mbw^\tr J_{2m}^\pm \mbw' & = & Q^\pm_{2m}(\mbw+\mbw') - Q^\pm_{2m}(\mbw) - Q^\pm_{2m}(\mbw').
\end{eqnarray*}
Hence, $J_{2m+1}'$ is the Gram matrix of the polar form of $Q_{2m+1}$
with respect to the standard basis of $\F_q^{2m+1}$, and $J^\pm_{2m}$
is the Gram matrix of the polar form of $Q^\pm_{2m}$ with respect to
the standard basis of $\F_q^{2m}$; see~\cite[Chapter~11]{Taylor}. 
We define
\begin{eqnarray*}
\GO_{2m+1}(q) & := & \{ \mbx \in \GL_{2m+1}(q) \mid Q_{2m+1}(\mbx\mbv)
= Q_{2m+1}(\mbv) \,\, \text{for all} \,\, \mbv \in \F_q^{2m+1} \},\\
\GO^\pm_{2m}(q) & := & \{ \mbx \in \GL_{2m}(q) \mid Q^\pm_{2m}(\mbx\mbw)
= Q^\pm_{2m}(\mbw) \,\, \text{for all} \,\, \mbw \in \F_q^{2m} \},\\
\SO_{2m+1}(q) & := & \GO_{2m+1}(q) \cap \SL_{2m+1}(q) , \quad
\SO^\pm_{2m}(q) := \GO^\pm_{2m}(q) \cap \SL_{2m}(q),
\end{eqnarray*}
and use the convention $\GO^\pm_0(q) := \SO^\pm_0(q) := \{1\}$. 
The orders of these groups are:
\begin{eqnarray*}
|\GO_{2m+1}(q)|   & = & d \cdot q^{m^2}(q^{2m}-1)(q^{2m-2}-1) \cdots (q^2-1),\\
|\SO_{2m+1}(q)|   & = & q^{m^2}(q^{2m}-1)(q^{2m-2}-1) \cdots (q^2-1),\\
|\GO^\pm_{2m}(q)| & = & 2 q^{m(m-1)} (q^m \mp 1)(q^{2m-2}-1)(q^{2m-4}-1) \cdots (q^2-1),\\
|\SO^\pm_{2m}(q)| & = & {\textstyle \frac2e} \cdot q^{m(m-1)} 
(q^m \mp 1)(q^{2m-2}-1)(q^{2m-4}-1) \cdots (q^2-1),
\end{eqnarray*}
where $d := \gcd(2, q-1)$ and $e := \gcd(2, q^m \mp 1)$. Note that for even $q$ we have
$\GO_{2m+1}(q) = \SO_{2m+1}(q)$ and $\GO^\pm_{2m}(q) = \SO^\pm_{2m}(q)$. 
This agrees with the definition in the ATLAS~\cite{Atlas}, but it
differs from the one in~\cite{Taylor}. 

\begin{remark} \label{rmk:even_iso_Sp}
For even $q$ there is a natural isomorphism 
\[
\SO_{2m+1}(q) \to \Sp_{2m}(q) := \{ \mbx \in \GL_{2m}(q) \mid \mbx^\tr
J_{2m} \mbx = J_{2m} \}
\]
mapping each matrix $A$ to the matrix which is obtained from $A$ by
removing the middle row and the middle column.
\end{remark}

\subsection{The Weyl group}
\label{subsec:weyl}

From now on we fix an odd integer $n=2m+1$ ($m\ge 1$) and set
$G:=G_n:=\SO_n(q)$. The Weyl group $W$ of $G$ is of type $B_m$ and 
we number the simple roots such that the Dynkin diagram is:
\setlength{\unitlength}{0.9mm}
\begin{center}
\begin{picture}(85,13)
\thinlines
\put(-10,5){\circle*{1.5}}
\put(15,5){\circle*{1.5}}
\put(40,5){\circle*{1.5}}
\put(75,5){\circle*{1.5}}
\put(100,5){\circle*{1.5}}
\put(-8,5.5){\line(1,0){21}}
\put(-8,4.5){\line(1,0){21}}
\put(17,5){\line(1,0){21}}
\put(42,5){\line(1,0){10}}
\put(63,5){\line(1,0){10}}
\put(55,3.85){$\cdots$}
\put(77,5){\line(1,0){21}}
\put(5,7){\line(-2,-1){4}}
\put(5,3){\line(-2,1){4}}
\put(-11.2,8){$1$}
\put(14,8){$2$}
\put(39,8){$3$}
\put(69,8){$m-1$}
\put(98,8){$m$}
\end{picture}
\end{center}
So the first simple root is short, the others are long. Let 
$s_j \in W$ be the reflection at the $j$-th simple root,
$j=1,2,\dots,m$. An inverse image (which we also denote by $s_j$) in
the normalizer~$N_G(T)$ of the maximally split torus 
\begin{equation} \label{eq:torus}
T := \{
\mathrm{diag}(t_m^{},\ldots,t_1^{},1,t_1^{-1},\ldots,t_m^{-1})
\mid t_1, \dots, t_m \in \F_q^\times\} 
\end{equation}
of $G$ is given by
\begin{eqnarray}
s_1 & := & \begin{bmatrix} I_{m-1} &  &  &  &  \\
 &.&.&1&\\&.&-1&.&\\&1&.&.&\\&&&&I_{m-1}\end{bmatrix}
\quad \text{and} \label{eq:s_1} \\
s_j & := & \begin{bmatrix}I_{m-j} &   &   &   &  \\  & J_2 &   &   &
   \\  &   & I_{2j-3} &   &  \\  &   &   & J_2 &  \\  &   &   &   &
  I_{m-j}\end{bmatrix} \quad \text{for} \ j \in \{2,3,\dots,m\}.
\label{eq:s_j}
\end{eqnarray}
For $m \ge 2$ we set $s := s_m$ and 
$t := s_m s_{m-1} \cdots s_2 s_1 s_2 \cdots s_{m-1} s_m$. It follows
from (\ref{eq:torus})-(\ref{eq:s_j}) that we can choose the following
inverse images of $s$ and $t$ in $N_G(T)$:
\begin{equation}\label{eq:defst}
s = \begin{bmatrix} J_2 &   &  \\  & I_{n-4} &  \\   &   &J_2\end{bmatrix}
\quad \text{and} \quad
t = \begin{bmatrix} &  &  &  & 1\\  & I_{m-1} & . & . & \\
 & . & -1 & . & \\  & . & . & I_{m-1} & \\ 
1 &  &  &  & \end{bmatrix}.
\end{equation}

\subsection{A parabolic subgroup}
\label{subsec:Pn}

Let $P_n$ denote the stabilizer in $G_n$ of the $1$-dimensional
subspace generated by the vector 
$e_m = \begin{bmatrix}1,0,0,\dots,0\end{bmatrix}^\tr$ in the natural
module for $G_n$. Then $P_n$ is a maximal parabolic 
subgroup of order $|P_n| = q^{m^2}(q-1)(q^{2m-2}-1)(q^{2m-4}-1) \cdots
(q^2-1)$ with Levi decomposition $P_n= U_n \rtimes L_n$, where 
\[
L_n  =  \{\mathbf{s}_n(\mathbf{x}, a) \mid \mathbf{x} \in G_{n-2},
 a\in\F_q^\times\}, \quad
U_n  = \{\mathbf{u}_n(\mathbf{v}) \mid \mathbf{v} \in \F_q^{n-2}\}
\]
and
\begin{equation}
 \mathbf{s}_n(\mathbf{x}, a) := \begin{bmatrix}a &   &  \\   &
   \mathbf{x} &  \\   &   & a^{-1} \end{bmatrix}, \quad 
\mathbf{u}_n(\mathbf{v}) := \begin{bmatrix} 1 & -\mathbf{v}^\tr
  J_{n-2}' & -Q_{n-2}(\mbv)\\   & 1 & \mathbf{v}\\   &   & 1 \end{bmatrix} 
\end{equation}
The Levi subgroup $L_n$ is a direct product
$L_n = L_n' \times A \cong \SO_{n-2}(q) \times \F_q^\times$ of 
\[
L_n' := \{\mathbf{s}_n(\mathbf{x}, 1) \mid \mathbf{x} \in \SO_{n-2}(q)\}
\ \text{and} \ A := \{\mathbf{s}_n(I_{n-2}, a) \mid a \in \F_q^\times \}.
\]
The unipotent radical $U_n$ of $P_n$ is an elementary abelian group of
order $q^{n-2}$. Often we consider $G_{n-2}$ as a subgroup of $G_n$
and $P_{n-2}$ as a subgroup of $L_n'$ via the identification 
$\mbx \mapsto \mbs_n(\mbx, 1)$.

\section{The irreducible characters of $P_n$}
\label{sec:irrP}

In this section we fix notation and classify the irreducible
characters of the maximal parabolic subgroup $P_n$ via Clifford
theory. A similar classification was also obtained by Schm\"olzer
in~\cite[Chapter~2]{Schmoelzer}.

\subsection{General character theoretic notation}

Let $K$ be a subgroup of a finite group $H$. We write $\Irr(H)$ for
the set of complex irreducible characters of $H$ and $1_H$ for the
trivial character. Let $\langle \cdot,\cdot \rangle_H$ be the usual
scalar product on the space of class functions of $H$. 
If $\chi$ is a character of $H$ we write 
$\chi\smash\downarrow^H_K$ for the restriction of $\chi$ 
to~$K$, and if~$\varphi$ is a character of $K$ we write
$\varphi\smash\uparrow_K^H$ for the character of $H$ which is 
induced by~$\varphi$. If~$K \unlhd H$ and $\psi$ is a
character of the factor group $H/K$ then we denote its inflation
to~$H$ by $\infl_{H/K}^H \psi$. For two characters $\chi, \psi$ of $H$ we
say that $\psi$ is a subcharacter of $\chi$ if $\chi-\psi$ is a character. 

\subsection{Inertia subgroups of $P_n$} 
\label{subsec:inertiasubgrps}

Let $n \ge 5$. The conjugation action of $L_n$ on $U_n$ is
given by ${}^{\mathbf{s}_n(\mathbf{x}, a)} \mathbf{u}_n(\mathbf{v}) =
\mathbf{u}_n(a\mathbf{xv})$. As before, we write 
$\mathbf{v} \in \F_q^{n-2}$ as $\mathbf{v} = \begin{bmatrix}v_{m-1},
\dots, v_1, v_0, v_1', \dots, v_{m-1}'\end{bmatrix}^\tr$, and fix a
non-trivial irreducible complex character $\xi$ of $(\F_q,+)$. The
corresponding action of $L_n$ on $\Irr(U_n)$ has four orbits. We
choose as representatives $1_U$ (the trivial character), and
non-trivial $\lambda^0$, $\lambda^+$, $\lambda^-$. For 
$\varepsilon \in \{0, +, -\}$ we denote the inertia subgroup of to
$\lambda^\varepsilon$ by $I^\varepsilon$. We define $\lambda^0$ by
$\lambda^0(\mathbf{u}_n(\mathbf{v})) := \xi(v_{m-1}')$ 
and get $I^0 = U_n\tilde{P}_{n-2}$ with $\tilde{P}_{n-2} :=
  \tilde{U}_{n-2} \tilde{L}_{n-2}$ where 
\begin{equation}
\tilde{L}_{n-2} := \left\{
\begin{bmatrix}a & & & & \\ & a &   &   &\\ &   &
  \mathbf{x} &   & \\ &   &   & a^{-1} &\\ & & & & a^{-1}\end{bmatrix} 
\mid \mathbf{x} \in \SO_{n-4}(q), a \in \F_q^\times \right\} 
\end{equation}
and 
\begin{equation}
\tilde{U}_{n-2} := \left\{\begin{bmatrix} 1 & & \\ 
  & \mathbf{u}_{n-2}(\mathbf{v}) & \\ & & 1
 \end{bmatrix} \mid \mathbf{v} \in \F_q^{n-4} \right\}. 
\end{equation}
Furthermore $|\tilde{P}_{n-2}| = q^{(m-1)^2}(q-1)(q^{2m-4}-1)(q^{2m-6}-1) \cdots (q^2-1)$.
We define~$\lambda^+$ by $\lambda^+(\mathbf{u}_n(\mathbf{v})) :=
\xi(v_0)$ and so $I^+ = U_n L_n^+$ where
\begin{equation}
L_n^+ := \left\{\begin{bmatrix}a & & & & \\ & A & . & B &\\ & . &
  a & . & \\ & C & . & D &\\ & & & & a\end{bmatrix} \mid
\begin{bmatrix} A & B\\ C & D\end{bmatrix}   \in \GO^+_{n-3}(q),
  a=\det\begin{bmatrix} A & B\\ C & D\end{bmatrix} \right\}.
\end{equation}
Note that $a = a^{-1} = \pm 1$. In particular, $L_n^+ \cong \GO_{n-3}^+(q)$ 
and 
\begin{equation}
|L_n^+|=2q^{(m-1)(m-2)}(q^{m-1}-1)(q^{2m-4}-1)(q^{2m-6}-1) \cdots (q^2-1).
\end{equation}
The construction of $\lambda^{-}$ is more complicated and less
explicit. We define a quadratic form $Q_n'$ on $\F_q^n$ by
\[
Q_n'(\mbv) := v_0^2 + v_1^2 + v_1 v_1' + \nu v_1'^2 +
\sum_{j=2}^m v_j v_j'. 
\]
It follows from~\cite[p.~139,~II.]{Taylor} that there is 
$\nu' \in \F_q^\times$ and 
\[
\mathbf{b}_n = \begin{bmatrix}\nu' I_{m-1} &   &  \\  & \mathbf{b}_3' &  \\
  &   & I_{m-1}\end{bmatrix} \in \GL_n(q)
\]
such that $Q_n'(\mathbf{b}_n\mbv) = \nu' \cdot Q_n(\mbv)$ for all
$\mbv \in \F_q^n$. Note that the matrix $\mathbf{b}_3' \in \GL_3(q)$
depends on $q$ but not on $n$. For odd $q$ we can choose 
$\nu'$ to be~$1$ or a  non-square in~$\F_q^\times$; for even $q$ we can
always choose $\nu'=1$. We define
$\lambda^{-}(\mathbf{u}_n(\mathbf{v})) := \xi((\mathbf{b}_{n-2}\mathbf{v})_0)$. 
A~straightforward calculation shows that $\mbs_n(\mathbf{y}, a) \in
I^{-}$ if and only if 
\[
\mathbf{y} \in \left\{\mathbf{b}_{n-2}^{-1}\begin{bmatrix} A & . & B \\ . &
  a & . \\ C & . & D \end{bmatrix}\mathbf{b}_{n-2} \mid
\begin{bmatrix} A & B\\ C & D\end{bmatrix} \in \GO^-_{n-3}(q),
  a=\det\begin{bmatrix} A & B\\ C & D\end{bmatrix} \right\}.
\]
Again $a = a^{-1} = \pm 1$. Thus $I^{-} = U_n L_n^-$ where 
$L_n^- \cong \GO_{n-3}^-(q)$. Furthermore
\begin{equation}
|L_n^-|=2q^{(m-1)(m-2)}(q^{m-1}+1)(q^{2m-4}-1)(q^{2m-6}-1) \cdots (q^2-1).
\end{equation}

Since the characters $\lambda^0$, $\lambda^+$, and 
$\lambda^-$ are non-trivial, the orders of their inertia subgroups 
are pairwise distinct, and the sizes of the four orbits
add up to $q^{n-2}$, the set  
$\{1_U, \lambda^0, \lambda^{+}, \lambda^{-}\}$ is a
set of representatives for the orbits of $L_n$ on $\Irr(U_n)$.

When $n=3$ the orbits of $\lambda^0$, $\lambda^-$ do not exist and
we have $L_3^+ := \GO_0^+(q) = \{1\}$ by definition.

\subsection{Irreducible characters of $P_n$}
\label{subsec:charsPn}

We still assume that $q$ is an arbitrary prime power and that 
$n=2m+1 \ge 3$. For simplicity we set $P := P_n$, $L := L_n$, 
$U := U_n$. We obtain four types of irreducible characters of $P$ 
according to which character of $1_U$, $\lambda^0$, and
$\lambda^{\pm}$ they cover:

\medskip

\begin{tabular}{ll}
Type $1$: & the characters with $U$ in their kernel,\\
Type $0$: & the characters covering $\lambda^0$,\\
Type $+$: & the characters covering $\lambda^{+}$, \\
Type $-$: & the characters covering $\lambda^{-}$.
\end{tabular}

\medskip

Note that for $n=3$ there are no characters of Type $0$ or Type~$-$. 
The characters of Type 1 are parameterized by the irreducible
characters of $L$ via inflation and we write 
${^1}\psi_\sigma := \infl_L^P \sigma$ for $\sigma \in \Irr(L)$. We
have ${^1}\psi_\sigma(1) = \sigma(1)$. 

Since $\lambda^0$, $\lambda^{\pm}$ are linear and 
$I^0 = U \rtimes \tilde{P}_{n-2}$, 
$I^{\pm} = U \rtimes L_n^\pm$ the characters $\lambda^0$,
$\lambda^{\pm}$ can be extended trivially to characters
$\hat{\lambda}^0 \in \Irr(I^0)$ and 
$\hat{\lambda}^{\pm} \in \Irr(I^{\pm})$, respectively.
We have a bijection between $\Irr(P_{n-2})$ and
$\Irr(\tilde{P}_{n-2})$ sending $\mu \in \Irr(P_{n-2})$ to the
character $\tilde{\mu} := (\mu \boxtimes 1_A)\smash\downarrow^{P_{n-2} 
  \times A}_{\tilde{P}_{n-2}}$. The irreducible characters of $P$ of
Type 0 are parameterized by $\Irr(P_{n-2})$ and the character of $P$
corresponding to $\mu \in \Irr(P_{n-2})$ is 
${^0}\psi_\mu:=( \hat{\lambda}^0 \cdot \infl_{\tilde{P}_{n-2}}^{I^0}
\tilde{\mu})\smash\uparrow_{I^0}^P$. We have 
${^0}\psi_\mu(1) = (q^{2m-2}-1) \mu(1)$. The irreducible characters of
$P$ of Type $\pm$ are parameterized by $\Irr(\GO_{n-3}^\pm(q))$ and the
character of $P$ corresponding to $\vartheta \in \Irr(\GO_{n-3}^\pm(q))$ 
is ${^\pm}\psi_\vartheta := (\hat{\lambda}^\pm \cdot
\infl_{L_n^\pm}^{I^\pm}\vartheta)\smash\uparrow_{I^\pm}^P$.
For the degrees of these characters, we have ${^\pm}\psi_\vartheta(1)
=  \frac12 q^{m-1} (q^{m-1}\pm1)(q-1) \vartheta(1)$. 

Let $\varepsilon \in \{1,0,+,-\}$. In the same way
as~\cite[Section~2]{AnHissUnipotent}, we use additive extension to 
expand the notation ${}^\varepsilon\psi_\sigma$ to non-irreducible
characters $\sigma$. For example, if $\sigma = \sum m_j \sigma_j$ 
with $\sigma_j \in \Irr(L)$ we set 
${}^1\psi_\sigma := \sum m_j {}^1\psi_{\sigma_j}$.

\section{Preliminary results on induced characters}
\label{sec:preind}

In this section we provide some information on certain characters of
$P$ which are induced from various subgroups. These results will be
used in subsequent sections when we study of the restriction of
Harish-Chandra induced characters via Mackey's Theorem.

\subsection{Group theoretical lemmas}

We use the setting and notation from Sections~\ref{sec:SO} and
\ref{sec:irrP}. In particular, we fix an odd integer $n=2m+1$ where 
$m \ge 1$. We choose simple reflections $s_1, s_2, \dots, s_m$
generating the Weyl group $W$ of $G_n$ and define elements 
$s, t \in W$ as in Section~\ref{subsec:weyl}. 
The following results are similar to~\cite[Lemmas~3.1, 3.2]{AnHissUnipotent}.  

\begin{lemma} \label{la:intersec_par}
Assume that $m \ge 2$. Let $J := \{s_1, s_2, \dots, s_{m-1}\}$ and 
$s, t \in G_n$ as in Section~\ref{subsec:weyl}. We write $W_J$ for the
subgroup of $W$ generated by $J$ and $D_{J,J}$ for the set of
distinguished representatives for $W_J \backslash W / W_J$ as in
\cite[Section~2.7]{Carter} and define $K := {}^sJ \cap J$. Then:
\begin{enumerate}
\item[(a)] $D_{J,J} = \{1,s,t\}$.

\item[(b)] ${}^tP \cap P = L$.

\item[(c)] $K = \{s_1, \dots, s_{m-2}\}$ and
${}^sP \cap P = ({}^sU \cap U)({}^sL \cap U)({}^sU \cap L)L_K$.
Moreover, $Q_K := ({}^sU \cap L)L_K$ is the standard parabolic
subgroup of $L=L_J$ corresponding to $K \subseteq J$ with Levi
decomposition $Q_K = ({}^sU \cap L) \rtimes L_K$ where
{\allowdisplaybreaks
\begin{eqnarray} \label{eq:QK}
{}^sU \cap L & = & \{\begin{bmatrix}1 &   &  \\  & \mbu_{n-2}(\mbv) &
   \\  &   & 1\end{bmatrix} \mid \mbv \in \F_q^{n-4}\} = U_{n-2} \ \text{and}\\
L_K & = & \{ \begin{bmatrix}a &   &   &   &  \\  & b &   &   &  \\
  &   & \mbx &   &  \\  &   &   & b^{-1} &  \\  &   &   &   &
a^{-1}\end{bmatrix} \mid a, b \in \F_q^\times, \mbx \in \SO_{n-4}(q)\}.
\end{eqnarray}}
Thus
\begin{equation} \label{eq:QK_expl}
Q_K = \{ \begin{bmatrix}a &   &   &   &  \\  & b & * & * &  \\
  &   & \mbx & * &  \\  &   &   & b^{-1} &  \\  &   &   &   &
a^{-1}\end{bmatrix} \mid a, b \in \F_q^\times, \mbx \in \SO_{n-4}(q)\} = A
\times P_{n-2}.
\end{equation}
Setting 
\begin{eqnarray} \label{eq:L_K}
A_{n-2} & := & {}^sA = \begin{bmatrix}1 &   &   &   &  \\  & b &   &   &  \\
  &   & I_{n-4} &   &  \\  &   &   & b^{-1} &  \\  &   &   &   &
1\end{bmatrix} \mid b \in \F_q^\times\} \cong \F_q^\times \ \text{and}\\
\tilde{L}_{n-2}' & := & \{\begin{bmatrix}I_2 &   &  \\  & \mbx &  \\  &   &
I_2\end{bmatrix} \mid \mbx \in \SO_{n-4}(q)\} \cong \SO_{n-4}(q)
\end{eqnarray}
we have $L_K = A \times A_{n-2} \times L_{n-2}'$.

\item[(d)] We set $R := ({}^sU \cap U)({}^sL \cap U)$. We have:
\begin{enumerate}
\item[(i)] ${}^sL \cap U = \{\mbu_n(\mbv) \mid \mbv \in \F_q^{n-2},
  v_{m-1}=v_{m-1}'=0\}$,

\item[(ii)] ${}^sL \cap L =L_K$,

\item[(iii)] ${^s}U\cap U = \{\mbu_n(\begin{bmatrix} v_{m-1} & 0 & \dots &
  0 \end{bmatrix}^\tr) \mid v_{m-1} \in \F_q\}$ and $|{^s}U\cap U|=q$,

\item[(iv)] ${}^sP \cap P = RQ_K$,

\item[(v)] $R = \{\mbu_n(\mbv) \mid \mbv \in \F_q^{n-2}, v_{m-1}'=0\}$ and
  $[U:R] = q$,

\item[(vi)] $[P_K:RQ_K] = q$ where $P_K=UQ_K$ is the parabolic subgroup of
  $G=G_n$ corresponding to $K \subseteq \{s_1, \dots, s_m\}$,

\item[(vii)] $Q_K = A \times P_{n-2}$.
\end{enumerate}
\end{enumerate}
\end{lemma}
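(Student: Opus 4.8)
The plan is to prove each part of Lemma~\ref{la:intersec_par} by direct computation with the explicit matrices fixed in Section~\ref{subsec:weyl} and Section~\ref{subsec:Pn}, using that $P = P_n = P_J$ is the standard maximal parabolic corresponding to $J = \{s_1,\dots,s_{m-1}\}$ (its Levi $L = L_n$ being generated by $T$ and $J$, with unipotent radical $U = U_n$). First I would establish part~(a): since $P_J$ is maximal, the double cosets $W_J\backslash W/W_J$ are few, and one computes $\ell(s) = 1$ only for the short simple reflection... but here $s = s_m$, so we must identify $D_{J,J}$ directly. Using the standard theory of distinguished double-coset representatives (\cite[Section~2.7]{Carter}), a representative $d \in D_{J,J}$ is characterized by $d$ being the minimal-length element in $W_J d W_J$; I would check that $1$, $s = s_m$, and $t = s_m s_{m-1}\cdots s_1 \cdots s_{m-1} s_m$ are of this form and that their double cosets exhaust $W$ — the latter by a counting argument comparing $|W_J|\cdot|W_J d W_J / W_J|$ against $|W| = 2^m m!$, or by the geometry of the $B_m$ root system (the orbits of $W_J = W(B_{m-1})$ on the "extra" coordinate).

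Next, parts (b) and (c) follow from the general formula for ${}^dP_J \cap P_J$ in terms of $W_J \cap {}^dW_J$ (see e.g.\ \cite[Theorem~2.8.7]{Carter}): one has ${}^dP_J \cap P_J = ({}^dU_J \cap U_J)({}^dU_J \cap L_J)({}^dL_J \cap U_J)L_{J \cap {}^dJ}$, the Levi of the intersection being the standard parabolic of $L_J$ attached to $J \cap {}^dJ$. For $d = t$ the known fact (to be verified from the explicit matrix for $t$ in~(\ref{eq:defst})) is that $t$ sends every simple root in $J$ to a negative root or interchanges the two ends so that $J \cap {}^tJ = \emptyset$, whence $L_{J \cap {}^tJ} = T$ and moreover all the $U$-parts collapse, giving ${}^tP \cap P = L$; concretely, conjugating $U_n$ by $t$ lands in the opposite unipotent radical. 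For $d = s = s_m$, I would compute ${}^{s_m}J \cap J$ by tracking the action of $s_m$ on simple roots $\alpha_1,\dots,\alpha_{m-1}$: $s_m$ fixes $\alpha_1,\dots,\alpha_{m-2}$ and moves $\alpha_{m-1}$, giving $K = \{s_1,\dots,s_{m-2}\}$. The explicit block form~(\ref{eq:QK})--(\ref{eq:QK_expl}) for $Q_K = ({}^sU\cap L)L_K$ is then read off the matrix for $s$ in~(\ref{eq:defst}): conjugating the block form of elements of $L_n$ by $s = \mathrm{diag}(J_2, I_{n-4}, J_2)$ swaps the outermost $2\times 2$ blocks, and intersecting with $L$ and with $U$ produces exactly the stated groups; the identification $Q_K = A \times P_{n-2}$ and $L_K = A \times A_{n-2} \times L_{n-2}'$ is matching up diagonal and block entries.

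For part (d), all seven items are explicit matrix bookkeeping once the positions of the relevant root subgroups are identified. The key observation is that $U = U_n$ corresponds to the row of root subgroups $\{\mbu_n(\mbv) \mid \mbv \in \F_q^{n-2}\}$, and conjugation by $s = s_m$ permutes the entries of $\mbv$ according to the swap of the first coordinate $v_{m-1}$ with (up to sign) the last coordinate $v_{m-1}'$, fixing the middle entries. Thus ${}^sL \cap U$ is the set of $\mbu_n(\mbv)$ with $v_{m-1} = v_{m-1}' = 0$ (item~(i)); ${}^sU \cap U$ is the one-parameter subgroup in the $v_{m-1}$ coordinate (item~(iii), with $|{}^sU\cap U| = q$); $R = ({}^sU\cap U)({}^sL\cap U)$ collects all $\mbu_n(\mbv)$ with $v_{m-1}' = 0$, of index $q$ in $U$ (item~(v)); item~(ii) is immediate from the definition $L_K = {}^sL\cap L$; items~(iv), (vi), (vii) are then formal consequences combining (c) with the index computations and the Levi decomposition $P_K = U Q_K$ of the standard parabolic of $G_n$ attached to $K$. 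The main obstacle I anticipate is not conceptual but notational: keeping the indexing of the coordinates of $\mbv$ and the block sizes consistent under conjugation by $s$ and $t$, and correctly handling the sign $-1$ appearing in the middle of $s_1$ and $t$ (which affects whether, e.g., $v_0 \mapsto -v_0$) — these signs must be tracked carefully so that the quadratic form $Q_{n-2}$ entering $\mbu_n(\mbv)$ is preserved, but they do not change any of the stated set-theoretic identities. A secondary check is that all the claimed isomorphisms ($A_{n-2} \cong \F_q^\times$, $\tilde L_{n-2}' \cong \SO_{n-4}(q)$, $Q_K \cong A \times P_{n-2}$) respect the group structure and not merely the underlying sets, which again is routine from the block-diagonal shapes.
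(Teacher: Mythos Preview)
Your overall strategy matches the paper's: part~(a) via distinguished double-coset combinatorics (the paper simply cites the analogous result for type $C_m$ in \cite{AnHissUnipotent}), part~(c) via Carter's \cite[Theorem~2.8.7 and Proposition~2.8.9]{Carter}, and parts~(b) and~(d) via explicit matrix computation. So the approach is essentially the same.

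There is, however, a genuine error in your treatment of part~(b). You claim that $t$ sends every simple root in $J$ to a negative root, so that $J \cap {}^tJ = \emptyset$ and hence $L_{J\cap{}^tJ} = T$. This is false. In the $B_m$ root system with the paper's labelling, $t = s_m s_{m-1}\cdots s_1 \cdots s_{m-1} s_m$ acts as the sign change on the $m$-th coordinate only, and the simple roots $\alpha_1,\dots,\alpha_{m-1}$ do not involve~$e_m$; hence $t$ \emph{fixes} each of them, and ${}^tJ = J$, so $J \cap {}^tJ = J$. Your own conclusion ``${}^tP \cap P = L$'' is then inconsistent with your stated reasoning: if the Levi piece were $T$ and all the $U$-pieces collapsed, you would get $T$, not $L$. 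The correct argument via Carter's formula is that $L_{J\cap{}^tJ} = L_J = L$, while ${}^tU \cap U = {}^tU \cap L = {}^tL \cap U = \{1\}$ because $t$ normalises $L$ and sends $U$ to the opposite unipotent radical. The paper avoids this altogether and simply does the direct matrix calculation from the explicit form of $t$ in~(\ref{eq:defst}), which is the safest route here.

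A smaller point: in (d)(ii) you write ``immediate from the definition $L_K = {}^sL \cap L$''. That is not the definition of $L_K$; $L_K$ is defined as the standard Levi attached to $K$, and the equality ${}^sL \cap L = L_K$ is part of what Carter's structure theorem gives you (or what you verify by hand). Otherwise your plan for (d) is exactly what the paper does, namely elementary matrix bookkeeping.
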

\begin{proof}
(a)~follows from \cite[Lemma~3.1 (a)]{AnHissUnipotent} since the Weyl
groups of type $B_m$ and $C_m$ are canonically isomorphic.

\medskip

\noindent (b)~follows from a straightforward calculation.

\medskip

\noindent (c)~It follows from~(\ref{eq:s_1}), (\ref{eq:s_j}) that $s$
commutes with $s_1, \dots, s_{m-2}$ and that $s s_{m-1} s \not\in J$.
Thus $K = \{s_1, \dots, s_{m-2}\}$. Now \cite[Theorem~2.8.7 (a) and
Proposition~2.8.9]{Carter} imply the remaining statements in (c). 

\medskip

\noindent (d)~follows from elementary matrix calculations.
\end{proof}

For $m\ge3$ we set $\tilde{P}_{n-4} := U_{n-4} \tilde{L}_{n-4}$ where 
\begin{eqnarray*}
\tilde{L}_{n-4} & := & \left\{\begin{bmatrix}a & & & & & & \\ & a & & & &
&\\ & & a & & & & \\ & & &
  \mathbf{x} & & & \\ & & & & a^{-1} && \\ & & & & & a^{-1}
  &\\&&&&&&a^{-1} \end{bmatrix} \mid \mathbf{x} \in \SO_{n-6}(q), a \in
\F_q^\times \right\} \ \text{and}
\end{eqnarray*}
\begin{eqnarray*}
U_{n-4} & := & \left\{\begin{bmatrix} I_2 & & \\ 
  & \mathbf{u}_{n-4}(\mathbf{v}) & \\ & & I_2
 \end{bmatrix} \mid \mathbf{v} \in \F_q^{n-6} \right\}.
\end{eqnarray*}
Furthermore we set $r := s_{m-1}$,
\begin{eqnarray*}
A_{n,n-2} & := & \left\{\begin{bmatrix}aI_2 &   &  \\  & I_{n-4} &
 \\  &   & aI_2\end{bmatrix} \mid a \in \F_q^\times \right\},\\
A_{n-4} & := & {}^rA_{n-2} = \left\{\begin{bmatrix}1 & & & & & & \\ & 1 & & & &
&\\ & & a & & & & \\ & & &
  I_{n-6} & & & \\ & & & & a^{-1} && \\ & & & & & 1
  &\\&&&&&&1 \end{bmatrix} \mid a \in \F_q^\times \right\}
\end{eqnarray*}
and additionally
{\allowdisplaybreaks
\begin{eqnarray*}
L_{n-2}' & := & \left\{\begin{bmatrix} 1 & & & &
\\ & 1 & & & \\ & & \mathbf{x} & & \\ & & & 1 & \\ & & & & 1
  \end{bmatrix} \mid \mathbf{x} \in \SO_{n-4}(q) \right\},\\
L_{n-4}' & := & \left\{\begin{bmatrix}1 & & & & & & \\ & 1 & & & &
&\\ & & 1 & & & & \\ & & &
  \mathbf{x} & & & \\ & & & & 1 && \\ & & & & & 1
  &\\&&&&&&1 \end{bmatrix} \mid \mathbf{x} \in \SO_{n-6}(q) \right\},
\end{eqnarray*}
\begin{eqnarray*}
Q_K' & := & A \times U_{n-2} \tilde{P}_{n-4}\\
& = & 
\left\{\begin{bmatrix}a &   &   &   &   &   &  \\ & b & * & * & * & * & \\
 & & b & * & * & * &  \\ & & & \mathbf{x} & * & * &  \\ & & & & b^{-1}
& * & \\ & & & & & b^{-1} & \\&&&&&&a^{-1} \end{bmatrix} \mid \mathbf{x} \in
\SO_{n-6}(q), a, b \in  \F_q^\times \right\} \le Q_K.
\end{eqnarray*}}

\begin{lemma} \label{la:intersec_RQK}
Let $m\ge3$. With the above notation we have 
\[
{}^{rs}(RQ_K') \cap U\tilde{P}_{n-2} = ({}^rR)Y
\]
where
\[
Y = ({}^rU_{n-2} \cap U_{n-2}) ({}^rL_{n-2} \cap U_{n-2}) ({}^rU_{n-2}
\cap L_{n-2}) (A_{n,n-2} \times A_{n-4} \times L_{n-4}').
\]
Furthermore $A \times Y = A \times ({}^rP_{n-2} \cap P_{n-2})$.
\end{lemma}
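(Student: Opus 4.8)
The plan is to mimic the structure of Lemma~\ref{la:intersec_par}: the element $rs$ plays the role of a ``double coset representative'' for the relevant pair of parabolic subgroups of $L_{n-2}$, and the identity ${}^{rs}(RQ_K') \cap U\tilde P_{n-2} = ({}^rR)Y$ should follow from an application of the general decomposition for $wP'\cap P''$ (as in \cite[Theorem~2.8.7, Proposition~2.8.9]{Carter}) once the various pieces are identified explicitly. First I would recall that, by Lemma~\ref{la:intersec_par}(d), $RQ_K'$ sits inside $P_K = UQ_K$, and that $R = \{\mbu_n(\mbv)\mid v_{m-1}'=0\}$, $Q_K' = A\times U_{n-2}\tilde P_{n-4}$. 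Conjugating first by $s$ and then by $r=s_{m-1}$, I would track the image of each factor: since $s$ commutes with $s_1,\dots,s_{m-2}$ and $A$, ${}^sA = A_{n-2}$ and ${}^s(U_{n-2}\tilde P_{n-4})$ can be read off from \eqref{eq:QK}, \eqref{eq:L_K}; then applying $r$ moves $A_{n-2}$ to $A_{n-4}$ (this is exactly the definition of $A_{n-4} = {}^rA_{n-2}$) and $A$ is fixed by $r$, so ${}^{rs}(RQ_K')$ is generated by ${}^rR$ together with $A$, $A_{n,n-2}$ (coming from the interaction of ${}^rA_{n-2}$ with the ambient torus), $A_{n-4}$, $L_{n-4}'$, and the appropriate unipotent pieces.

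The heart of the argument is then the intersection with $U\tilde P_{n-2}$. I would argue factor by factor. The abelian unipotent part: $({}^rR)\cap U\tilde P_{n-2}$ — here one checks by explicit matrix computation that all of ${}^rR$ already lies in $U\tilde P_{n-2}$, since $\tilde P_{n-2}$ contains $U$ and $r$ only rearranges coordinates among those indexed by $m-1$. For the Levi-type directions, the key observation is that inside $L_{n-2}\cong\SO_{n-4}(q)$, the element $r$ is a distinguished double-coset representative for $W_{K'}\backslash W_{n-2}/W_{K'}$ where $K'$ corresponds to the parabolic $\tilde P_{n-4}$; thus ${}^rP_{n-2}\cap P_{n-2}$ decomposes, by \cite[Theorem~2.8.7]{Carter} applied inside $\SO_{n-4}(q)$, as a product of the four unipotent intersections $({}^rU_{n-2}\cap U_{n-2})$, $({}^rL_{n-2}\cap U_{n-2})$, $({}^rU_{n-2}\cap L_{n-2})$, and the Levi $L_{n-4}$ of the standard parabolic attached to $K'$. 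Matching this Levi against the explicit description, $L_{n-4}\cap(\text{relevant subgroup}) = A_{n,n-2}\times A_{n-4}\times L_{n-4}'$, which is precisely the definition of $Y$. The factor $A$ is central in $L$ and contributes nothing to the constraint ``$\in U\tilde P_{n-2}$'' beyond itself, whence the ``Furthermore'' clause $A\times Y = A\times({}^rP_{n-2}\cap P_{n-2})$.

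I would organize the write-up as: (1) compute ${}^{rs}(RQ_K')$ explicitly as a product of named subgroups using \eqref{eq:s_1}--\eqref{eq:defst} and the definitions of $A_{n-4}$, $A_{n,n-2}$; (2) intersect with $U\tilde P_{n-2}$ one factor at a time, noting that the unipotent radical direction $v_{m-1}'$ that was killed to form $R$ is exactly the direction along which $U$ exceeds $R$ (Lemma~\ref{la:intersec_par}(d)(v)), so that conjugating by $r$ and intersecting with the larger group $U\tilde P_{n-2}\supseteq RQ_K$ recovers the full ${}^rR$; (3) identify the surviving Levi part with $Y$ via the Carter decomposition inside $\SO_{n-4}(q)$; (4) deduce the ``Furthermore'' statement by observing $A$ commutes with everything in sight and $A\cap U\tilde P_{n-2}\tilde{} = A$. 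The main obstacle I anticipate is purely bookkeeping: keeping the seven block-coordinates straight under the successive conjugations by $s$ and $r$, and verifying that no ``cross terms'' (off-diagonal unipotent entries linking the $A_{n,n-2}$-block to the $\SO_{n-6}$-block) survive the intersection with $U\tilde P_{n-2}$ — this is where a careful, fully explicit matrix computation, analogous to the proof of Lemma~\ref{la:intersec_par}(d), will be needed rather than an appeal to abstract BN-pair combinatorics.
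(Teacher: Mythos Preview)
Your plan is essentially the paper's own proof: rewrite ${}^{rs}(RQ_K')$ as a product of explicit subgroups, observe that ${}^rR$ and $A_{n,n-2}$ already lie in $U\tilde P_{n-2}$, and reduce the remaining intersection to the decomposition of ${}^rP_{n-2}\cap P_{n-2}$ obtained by applying Lemma~\ref{la:intersec_par}(c) one rank down. Two small bookkeeping slips to watch for: (i)~$A$ is \emph{not} a factor of ${}^{rs}(RQ_K')$ --- under ${}^{rs}$ the factor $A$ of $Q_K'$ becomes $A_{n-4}$, so the torus pieces present are $A_{n,n-2}$ and $A_{n-4}$ only; (ii)~the Carter decomposition for ${}^rP_{n-2}\cap P_{n-2}$ takes place inside $G_{n-2}\cong\SO_{n-2}(q)$, not inside $L_{n-2}$. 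For the ``Furthermore'' clause, the content is the torus identity $A\cdot A_{n,n-2}=A\cdot A_{n-2}$ combined with $Y=A_{n,n-2}Y'$ and ${}^rP_{n-2}\cap P_{n-2}=A_{n-2}Y'$; ``$A$ commutes with everything'' alone does not give it.
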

\begin{proof}
We have $\tilde{P}_{n-2} = A_{n,n-2}U_{n-2}L_{n-2}'$ and 
$Q_K' = AU_{n-2} \tilde{P}_{n-4}$. Furthermore 
\[
{}^{sr}A_{n,n-2} = \left\{\begin{bmatrix}1 & & & & & & \\ & a & & & &
&\\ & & a & & & & \\ & & &
  I_{n-6} & & & \\ & & & & a^{-1} && \\ & & & & & a^{-1}
  &\\&&&&&&1 \end{bmatrix} \mid a \in \F_q^\times \right\}
\]
and thus $A\tilde{P}_{n-4} = A ({}^{sr}A_{n,n-2}) U_{n-4} L_{n-4}'$.
Since $A$ centralizes $U_{n-2}$ we have
\[
Q_K' = A U_{n-2} A ({}^{sr}A_{n,n-2}) U_{n-4} L_{n-4}'
= A U_{n-2} ({}^{sr}A_{n,n-2}) U_{n-4} L_{n-4}'.
\]
Using that $A$ centralizes $U_{n-2}$, that $s$ normalizes $R U_{n-2}$,
that $A_{n-2} = {}^sA$, that $s$ centralizes $U_{n-4} L_{n-4}'$ and
that $A_{n-2}$ and ${}^rA_{n,n-2}$ centralize each other and normalize
$U_{n-2}$, we get
\begin{eqnarray*}
{}^{rs}(RQ_K') & = & {}^{rs}(R A U_{n-2} ({}^{sr}A_{n,n-2}) U_{n-4} L_{n-4}')\\
& = & {}^{rs}(R U_{n-2}) \ ({}^{rs}A) \ A_{n,n-2} \ {}^{rs}(U_{n-4} L_{n-4}')\\
& = & {}^r(R U_{n-2}) \ ({}^rA_{n-2}) \ {}^r({}^rA_{n,n-2}) \ {}^r(U_{n-4} L_{n-4}')\\
& = & ({}^rR) \ A_{n,n-2} \ {}^r(A_{n-2} U_{n-2} U_{n-4} L_{n-4}').
\end{eqnarray*}
An elementary matrix calculation shows that
\[
{}^rR, A_{n,n-2} \subseteq U\tilde{P}_{n-2} \quad \text{and} \quad
{}^r(A_{n-2} U_{n-2} U_{n-4} L_{n-4}')
\subseteq L_n'.
\]
Hence
\begin{eqnarray*}
{}^{rs}(RQ_K') \cap U\tilde{P}_{n-2} & = & ({}^rR) \ A_{n,n-2}
[{}^r(A_{n-2} U_{n-2} U_{n-4} L_{n-4}') \cap L_n' \cap U\tilde{P}_{n-2}]\\
& = & ({}^rR) \ A_{n,n-2}
[{}^r(A_{n-2} U_{n-2} U_{n-4} L_{n-4}') \cap (L_n' \cap U\tilde{P}_{n-2})]\\
& = & ({}^rR) \ A_{n,n-2}
[{}^r(A_{n-2} U_{n-2} U_{n-4} L_{n-4}') \cap U_{n-2} L_{n-2}']\\
& = & ({}^rR) Y,
\end{eqnarray*}
where $Y := A_{n,n-2}[U_{n-2} L_{n-2}' \cap {}^r(A_{n-2} U_{n-2} U_{n-4} L_{n-4}')]$.
By Lemma~\ref{la:intersec_par}, (c)~applied to $G_{n-2}$ we have
\[
{}^rP_{n-2} \cap P_{n-2} = 
({}^rU_{n-2} \cap U_{n-2}) 
({}^rL_{n-2} \cap U_{n-2}) 
({}^rU_{n-2} \cap L_{n-2})  
(A_{n-2} \times A_{n-4} \times L_{n-4}')
\]
where
\begin{gather*}
({}^rU_{n-2} \cap U_{n-2}) ({}^rL_{n-2} \cap U_{n-2}) = \{
\begin{bmatrix}1 &   &   &   &  &   &   &  \\
 & 1 & . & * & \dots & * & * & \\
 &  &&&&& * & \\
 &  &   && I_{n-4} && \vdots & \\
 &  &&&&& * & \\
 &  &&&&& . & \\
 &  &  &  &  &  & 1 & \\
 &  &  &  &  &  &  & 1
\end{bmatrix}\}
\end{gather*}
and
\begin{gather*}
({}^rU_{n-2} \cap L_{n-2}) (A_{n-2} \times A_{n-4}
  \times L_{n-4}') = \hspace{5cm}\\
\hspace*{3cm} \{\begin{bmatrix} 1 &   &   &   &   &   &  \\
  & a &   &   &   &   &  \\
  &   & b & * & * &   &  \\
  &   &   & \mbx & * &   &  \\
  &   &   &   & b^{-1} &   &  \\
  &   &   &   &   & a^{-1} &  \\
  &   &   &   &   &   & 1
\end{bmatrix} \mid \mbx \in \SO_{n-6}(q), a,b \in \F_q^\times\}.
\end{gather*}
We set 
\begin{eqnarray} \label{eq:Y'}
Y' & := & ({}^rU_{n-2} \cap U_{n-2}) \nonumber
({}^rL_{n-2} \cap U_{n-2}) ({}^rU_{n-2} \cap L_{n-2})  
(A_{n-4} \times L_{n-4}')\\
& = & \{\begin{bmatrix} 1 &   &   &   &   &   &  \\
  & 1 & . & * & * & * &  \\
  &   & b & * & * & * &  \\
  &   &   & \mbx & * & * &  \\
  &   &   &   & b^{-1} & . &  \\
  &   &   &   &   & 1 &  \\
  &   &   &   &   &   & 1
\end{bmatrix} \mid \mbx \in \SO_{n-6}(q), b \in \F_q^\times\}
\end{eqnarray}
so that ${}^rP_{n-2} \cap P_{n-2} = A_{n-2} Y'$. We can see from
(\ref{eq:Y'}) that $Y' \subseteq U_{n-2}L_{n-2}'$. An elementary matrix
calculation shows that ${}^rL_{n-2} \cap U_{n-2} = {}^rU_{n-4}$.
Using that $r$ centralizes $L_{n-4}'$, that $U_{n-4}$
normalizes~$U_{n-2}$ and that $A_{n-2}$ normalizes the
subgroup~$U_{n-2}$ and centralizes $U_{n-4}$, we get
\begin{eqnarray*}
Y' & \subseteq & {}^rU_{n-2} {}^rU_{n-4} {}^rU_{n-2} {}^rA_{n-2} {}^rL_{n-4}'
= {}^r(U_{n-2} U_{n-4} U_{n-2} A_{n-2} L_{n-4}')\\
& = & {}^r(A_{n-2} U_{n-2} U_{n-4} L_{n-4}')
\end{eqnarray*}
and thus 
\begin{equation} \label{eq:Y'subset}
Y' \le U_{n-2}L_{n-2}' \cap {}^r(A_{n-2} U_{n-2} U_{n-4} L_{n-4}'). 
\end{equation}
Because $P_{n-2}' := U_{n-2} L_{n-2}' \subseteq U_{n-2} L_{n-2} = P_{n-2}$,
we get
\begin{eqnarray*}
U_{n-2}L_{n-2}' \cap {}^r(A_{n-2} U_{n-2} U_{n-4} L_{n-4}') & \le &
P_{n-2}' \cap {}^rP_{n-2}\\
& = & (P_{n-2}' \cap P_{n-2}) \cap {}^rP_{n-2}\\
& = & P_{n-2}' \cap ({}^rP_{n-2}\cap P_{n-2})\\
& = & P_{n-2}' \cap A_{n-2} Y' = Y'.
\end{eqnarray*}
Hence $Y' = U_{n-2}L_{n-2}' \cap {}^r(A_{n-2} U_{n-2} U_{n-4} L_{n-4}')$ 
and $Y = A_{n,n-2} Y'$. Now the claim follows.
\end{proof}

\subsection{Restriction of Harish-Chandra induced characters}
\label{subsec:resHC}

Let $m\ge 1$ and let $\sigma$ be an irreducible character of $L$ 
with $A \le \ker(\sigma)$. In this section we study 
$R_L^G(\sigma) := (\infl_L^P \sigma)\smash\uparrow_P^G$, the character
of $G$ which is obtained from $\sigma$ by Harish-Chandra
induction. We proceed along the lines
of~\cite[Section~3]{AnHissUnipotent}. Suppressing notation for
inflation, Mackey's Theorem and Lemma~\ref{la:intersec_par} give us: 
\begin{eqnarray} \label{eq:LGP}
R_L^G(\sigma)\smash\downarrow^G_P & = & \sigma \ + \ \nonumber
{}^s\sigma\smash\downarrow^{{}^sP}_{{}^sP \cap P}\smash\uparrow_{{}^sP \cap P}^P
\ + \ {}^t\sigma\smash\downarrow^{{}^tP}_{{}^tP \cap P}\smash\uparrow_{{}^tP \cap P}^P\\
& = &\sigma \ + \
{}^s\sigma\smash\downarrow^{{}^sP}_{RQ_K}\smash\uparrow_{RQ_K}^P
\ + \ {}^t\sigma\smash\downarrow^{{}^tP}_{L}\smash\uparrow_{L}^P.
\end{eqnarray}
Since $RQ_K = {}^sP \cap P$ is $s$-invariant we have
${}^s\sigma\smash\downarrow^{{}^sP}_{RQ_K} =
{}^s\sigma\smash\downarrow^{{}^sP}_{{}^sRQ_K} =
{}^s(\sigma\smash\downarrow^P_{RQ_K})$. And furthermore,
$\sigma\smash\downarrow^P_{RQ_K} =
\infl_{P_{n-2}}^{RQ_K}(\sigma\smash\downarrow^{L'}_{P_{n-2}})$. 
Hence
${}^s\sigma\smash\downarrow^{{}^sP}_{RQ_K}\smash\uparrow_{RQ_K}^P$ is
a sum of characters of the form $({}^s\nu)\smash\uparrow_{RQ_K}^P$
where $\nu \in \Irr(P_{n-2})$ is considered as a character of 
$RQ_K = R(A \times P_{n-2})$ via inflation. Thus we have to determine 
$({}^s\nu)\smash\uparrow_{RQ_K}^P$ for $\nu \in \Irr(P_{n-2})$ of Type
1,0,$\pm$. In parts (d) and (e)~of the following theorem we need 
subgroups $P_{n-3}^\pm$ of $L_n^\pm$ which are defined as follows: 
We set 
\[
P_{n-3}^+ \hspace{-0.1cm} := \hspace{-0.1cm} 
\left\{\begin{bmatrix}a &   &   &   &   &   &  \\
  & b & * & .  & * & * &  \\
  &   & A & .  & B & * &  \\
  &   & .  & a & .  & .  &  \\
  &   & C & .  & D & * &  \\
  &   &   &   &   & \hspace{- .1cm} b^{-1} \hspace{- .1cm} &  \\
  &   &   &   &   &   & a
\end{bmatrix} \hspace{-0.1cm} | \hspace{-0.1cm} 
\begin{bmatrix} A & \hspace{-0.2cm} B\\ C & \hspace{-0.2cm} D\end{bmatrix}   \in \GO^+_{n-5}(q),
  a=\det\begin{bmatrix} A & \hspace{-0.2cm} B\\ C & \hspace{-0.2cm} D\end{bmatrix}, b \in \F_q^\times \right\}
\]
for $m>2$ and
\[
P_2^+ := \{
\mathrm{diag}(1,b,1,b^{-1},1)
\mid b \in \F_q^\times\} \cong \F_q^\times
\]
for $m=2$. For $m>2$ we define $P_{n-3}^-$ to be the subgroup of
$L_n^-$ consisting of all matrices $\mathbf{s}_n(\mathbf{y}, a)$ 
with  
\[
\mathbf{y} \in 
\left\{\mathbf{b}_{n-2}^{-1} \hspace{-0.1cm} 
\begin{bmatrix}b \hspace{-0.1cm} & * & . & * & *\\
 \hspace{-0.1cm} & A & . & B & *\\
 \hspace{-0.1cm} & . & a & . & .\\
 \hspace{-0.1cm} & C & . & D & *\\
 \hspace{-0.1cm} &  &  &  & b^{-1}
\end{bmatrix} \hspace{-0.1cm} \mathbf{b}_{n-2} \hspace{-0.1cm} \mid \hspace{-0.1cm} 
\begin{bmatrix} A \hspace{-0.1cm} & \hspace{-0.1cm}
  B\\ C \hspace{-0.1cm} & \hspace{-0.1cm} D\end{bmatrix}   \in
  \GO_{n-5}^-(q), a=\det\begin{bmatrix} A \hspace{-0.1cm} & \hspace{-0.1cm}
  B\\ C \hspace{-0.1cm} & \hspace{-0.1cm} D\end{bmatrix} \hspace{-0.1cm} , b \in \F_q^\times \right\}
\]
where the matrix $\mathbf{b}_{n-2}$ is defined in
Section~\ref{subsec:inertiasubgrps}. The following theorem and parts 
of its proof are analogous to~\cite[Theorem~3.3]{AnHissUnipotent}.

\begin{theorem} \label{thm:indnu}
Assume that $m \ge 2$. Let $\sigma \in \Irr(L)$ with 
$A \le \ker(\sigma)$. Then the following statements hold:
\begin{enumerate}
\item[(a)]  ${}^t\sigma\smash\downarrow^{{}^tP}_{L} =
{}^t\sigma$. In particular, ${}^t\sigma\smash\downarrow^{{}^tP}_{{}^tP
  \cap P}\smash\uparrow_{{}^tP \cap P}^P = ({}^t\sigma)\smash\uparrow_{L}^P$.

\item[(b)] Let $\nu \in \Irr(P_{n-2})$ be of Type 1. We
  view $\nu$ as an irreducible character of the groups 
  $L_K = A \times L_{n-2} = A \times A_{n-2} \times L_{n-2}'$ and 
  $RQ_K$ via inflation. Then
  \[
  ({}^s\nu)\smash\uparrow_{RQ_K}^P = {}^0\psi_{\nu} + {}^1\psi_\Sigma
  \]
  where $\Sigma := R_{L_K}^L({}^s\nu)$.

\item[(c)] Suppose that $m \ge 3$. Let $r := s_{m-1}$ and let
$\nu \in \Irr(P_{n-2})$ be of Type 0, i.e., there is 
$\nu_0 \in \Irr(P_{n-4})$ such that $\nu = {}^0\psi_{\nu_0}$. By
inflation we consider $\nu_0$ as an irreducible character of
\[
{}^rP_{n-2} \cap P_{n-2} = \{\begin{bmatrix}1&&&&&&\\& a & . & * & * & *
  &\\&& b & * & * & * &\\&&& \mbx & * & * &\\&&&&b^{-1} & .
&\\&&&&&a^{-1}&\\&&&&&&1\end{bmatrix} \mid \mbx \in \SO_{n-6}(q), a,b \in \F_q^\times\}.
\]
Since ${}^rP_{n-2} \cap P_{n-2}$ is $r$-invariant we have ${}^r\nu_0
\in \Irr({}^rP_{n-2} \cap P_{n-2})$. Then
\[
({}^s\nu)\smash\uparrow_{RQ_K}^P = {}^0\psi_\Sigma
\]
where $\Sigma := ({}^r\nu_0)\smash\uparrow_{{}^rP_{n-2} \cap P_{n-2}}^{P_{n-2}}$.

\item[(d)] Let $\nu \in \Irr(P_{n-2})$ be of Type $+$, say 
$\nu = {}^+\psi_{\vartheta_0}$ for some 
$\vartheta_0 \in \Irr(L_{n-2}^+)$, and let 
$\vartheta := R_{L_{n-2}^+}^{L_n^+}(\vartheta_0) :=
  \tilde{\vartheta}_0\smash\uparrow_{P_{n-3}^+}^{L_n^+}$ where
$\tilde{\vartheta}_0 \in \Irr(P_{n-3}^+)$ is the inflation of $\vartheta_0$
given by 
\[
\tilde{\vartheta}_0(\begin{bmatrix}
a &   &   &   &   &   &  \\
  & b & * & . & * & * &  \\
  &   & A & . & B & * &  \\
  &   & . & a & . & . &  \\
  &   & C & . & D & * &  \\
  &   &   &   &   & b^{-1} &  \\
  &   &   &   &   &   & a
\end{bmatrix}) := 
\vartheta_ (\begin{bmatrix}a &   &   &   &   \\
  & A & . & B &  \\
  & . & a & . &  \\
  & C & . & D &  \\
  &   &   &   & a\end{bmatrix}).
\]
Then $({}^s\nu)\smash\uparrow_{RQ_K}^P = {}^+\psi_\vartheta$.

\item[(e)] Let $\nu \in \Irr(P_{n-2})$ be of Type $-$, say  
$\nu = {}^-\psi_{\vartheta_0}$ for some 
$\vartheta_0 \in \Irr(L_{n-2}^-)$, and let 
$\vartheta := R_{L_{n-2}^-}^{L_n^-}(\vartheta_0) :=
  \tilde{\vartheta}_0\smash\uparrow_{P_{n-3}^-}^{L_n^-}$ where
$\tilde{\vartheta}_0 \in \Irr(P_{n-3}^-)$ is the inflation of $\vartheta_0$
given by 
\[
\tilde{\vartheta}_0(
\mathbf{s}_n(
\mathbf{b}_{n-2}^{-1} \hspace{-0,1cm} \begin{bmatrix}
b & * & . & * & \hspace{- ,1cm} *\\
  & A & . & B & \hspace{- ,1cm} *\\
  & . & a & . & \hspace{- ,1cm} .\\
  & C & . & D & \hspace{- ,1cm} *\\
  &   &   &   & \hspace{- ,1cm} b^{-1}
\end{bmatrix} \hspace{-0,1cm} \mathbf{b}_{n-2}, a) := 
\vartheta_0(\mathbf{s}_{n-2}(
\mathbf{b}_{n-4}^{-1} \hspace{-0,1cm} 
\begin{bmatrix}A & . & B\\. & a & .\\C & . & D
\end{bmatrix} \hspace{-0,1cm} \mathbf{b}_{n-4}, a)).
\]
Then $({}^s\nu)\smash\uparrow_{RQ_K}^P = {}^-\psi_\vartheta$.

\end{enumerate}
\end{theorem}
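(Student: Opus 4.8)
The plan is to compute $({}^s\nu)\smash\uparrow_{RQ_K}^P$ for $\nu$ of Type $-$ by the same strategy used for Type $+$ in part~(d), with the extra complication that everything must be conjugated by the matrix $\mathbf{b}_{n-2}$ which intertwines the forms $Q_n$ and $Q_n'$. First I would observe that $\nu = {}^-\psi_{\vartheta_0}$ is induced from the inertia subgroup $\tilde{I}^- = \tilde{U}_{n-2} L_{n-2}^-$ inside $P_{n-2}$, where $\vartheta_0$ has been extended by the linear character $\hat{\lambda}^-$ defined via $\xi((\mathbf{b}_{n-4}\mathbf{v})_0)$. Inducing in stages through $RQ_K = R\cdot(A\times P_{n-2})$, and using transitivity of induction, $({}^s\nu)\smash\uparrow_{RQ_K}^P$ becomes $({}^s(\hat{\lambda}^-\cdot\infl\,\vartheta_0))\smash\uparrow_{R\,{}^s\tilde{I}^-}^{P}$ after identifying the relevant subgroups. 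The key point — exactly as in~(d) — is to identify $R\cdot{}^s(\text{stuff})$ with the correct inertia subgroup $I^-\subseteq P$; concretely, I would check that the linear character $\hat{\lambda}^-$ on $\tilde{U}_{n-2}$, once transported by $s$ and combined with the nontrivial character of $R/({}^sL\cap U)\cong\F_q$ coming from the $v_0$-slot, assembles into $\hat{\lambda}^-\in\Irr(I^-)$ for the \emph{ambient} group $G_n$. This is where the matrix $\mathbf{b}_n$ (and the relation $\mathbf{b}_n = \mathrm{diag}(\nu'I_{m-1},\mathbf{b}_3',I_{m-1})$ with $\mathbf{b}_3'$ independent of $n$) does the bookkeeping: the block $\mathbf{b}_3'$ sits in the same three middle coordinates whether we work in $G_n$ or $G_{n-2}$, so the definition $\lambda^-(\mathbf{u}_n(\mathbf{v})) = \xi((\mathbf{b}_{n-2}\mathbf{v})_0)$ is compatible with its $G_{n-2}$-analogue under the embedding.

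Concretely the steps are: (1) rewrite $({}^s\nu)\smash\uparrow_{RQ_K}^P$ using Lemma~\ref{la:intersec_par}(d) to describe $R$, $Q_K = A\times P_{n-2}$, and the $s$-action, reducing to an induction from $R\cdot{}^s\tilde{P}_{n-2}^-$ where $\tilde{P}_{n-2}^-$ is the inertia-type subgroup of $P_{n-2}$ underlying $\nu$; (2) verify the subgroup identity ${}^s\bigl(R\cdot\tilde{U}_{n-2}L_{n-2}^-\bigr) = U_n L_n^-$ up to the needed conjugacy — i.e., that $R\cdot{}^s(\cdots)$ is conjugate in $P$ to $I^- = U_nL_n^-$, using that ${}^sL\cap U$ together with the image of $\tilde{U}_{n-2}$ under $s$ fills out $U_n$, and that ${}^sL_{n-2}^-$ inside $G_n$ is precisely $P_{n-3}^-$ as defined just before the theorem; (3) track the linear character: show that ${}^s(\hat{\lambda}^-)$ restricted to ${}^s\tilde{U}_{n-2}$, multiplied by the character of ${}^sL\cap U\subseteq R$ determined by the $v_0$-coordinate, equals $\hat{\lambda}^-\downarrow$ on the relevant part of $U_n$ — this is the calculation where $\mathbf{b}_{n-2}$ versus $\mathbf{b}_{n-4}$ must be reconciled via the block structure of $\mathbf{b}_n$; (4) on the Levi part, check that the inflation $\tilde{\vartheta}_0$ defined in the statement is exactly $\infl_{L_{n-2}^-}^{P_{n-3}^-}(\vartheta_0)$ under these identifications, so that $({}^s\nu)\smash\uparrow_{RQ_K}^P = (\hat{\lambda}^-\cdot\infl\,\tilde{\vartheta}_0)\smash\uparrow_{U_nP_{n-3}^-}^{P}$; (5) recognize the right-hand side as ${}^-\psi_\vartheta$ with $\vartheta = \tilde{\vartheta}_0\smash\uparrow_{P_{n-3}^-}^{L_n^-} = R_{L_{n-2}^-}^{L_n^-}(\vartheta_0)$ by transitivity of induction and the definition of Type $-$ characters in Section~\ref{subsec:charsPn}. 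A degree check ($({}^s\nu)\smash\uparrow_{RQ_K}^P(1) = q\cdot\nu(1) = q\cdot\tfrac12 q^{m-2}(q^{m-2}-1)(q-1)\vartheta_0(1)$ versus ${}^-\psi_\vartheta(1) = \tfrac12 q^{m-1}(q^{m-1}-1)(q-1)\vartheta(1)$ with $\vartheta(1) = [L_n^-:P_{n-3}^-]\vartheta_0(1)$) provides a useful consistency check at the end.

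The main obstacle I anticipate is step~(3): the linear-character bookkeeping for Type $-$. Unlike Types $0$ and $+$, where $\lambda^\varepsilon$ is given by a coordinate of $\mathbf{v}$ in the \emph{standard} basis and the conjugation by $s$ acts transparently, here $\lambda^-$ is defined through the auxiliary matrix $\mathbf{b}_{n-2}$, so one must carefully verify that the restriction of $\xi((\mathbf{b}_{n-2}\mathbf{v})_0)$ to ${}^sL\cap U$ and to ${}^s\tilde{U}_{n-2}$ matches the intended character of $U_n$ and that the ``twist'' data in $L_n^-$ (the $a=\det$ condition, the conjugation by $\mathbf{b}_{n-2}$ in the description of $L_n^-$) is preserved. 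The fact that $\mathbf{b}_n$ is block-diagonal with the $n$-independent central block $\mathbf{b}_3'$ is precisely what makes this work, and I would isolate this as a small lemma (or an explicit matrix identity) before feeding it into the Mackey/Clifford computation. Once that compatibility is nailed down, the rest of the argument is formally identical to part~(d), mutatis mutandis replacing $+$ by $-$ and $\GO^+$ by $\GO^-$ throughout.
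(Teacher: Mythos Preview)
Your overall instinct is right: part~(e) is proved exactly as part~(d), and the genuine new difficulty is indeed the $\mathbf{b}_{n-2}$ bookkeeping in tracking $\lambda^-$. But your step~(2) rests on subgroup identities that are false, and this is a real gap rather than just imprecision.

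First, ${}^s\tilde{U}_{n-2}$ does \emph{not} combine with ${}^sL\cap U$ to ``fill out $U_n$'': since $U_{n-2}={}^sU\cap L$ we have ${}^s U_{n-2}=U\cap{}^sL$, which is the \emph{same} group as ${}^sL\cap U$, of order $q^{n-4}$; together with ${}^sU\cap U$ you only get $R$, of index $q$ in $U$. Second, ${}^sL_{n-2}^-$ is \emph{not} $P_{n-3}^-$: the latter has an extra torus factor $\F_q^\times$ and a unipotent radical, so $|P_{n-3}^-|=q^{n-5}(q-1)\,|L_{n-2}^-|$. Consequently the hoped-for conjugacy ${}^s(R\cdot \tilde U_{n-2}L_{n-2}^-)\sim U_nL_n^-$ fails already on orders, and the equality in your step~(4) cannot be obtained as a direct identification of inducing subgroups. (Your degree check is also off: $[P:RQ_K]=q\cdot(q^{2m-2}-1)/(q-1)$, not $q$.)

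The paper --- for both~(d) and~(e) --- avoids any such conjugacy claim. Instead it works with the common subgroup $RP_{n-3}^-$, which sits inside both $({}^sR)(A_{n-2}\times{}^s(U_{n-2}L_{n-2}^-))$ and $UL_n^-$. One checks by the explicit matrix calculation (this is where your $\mathbf{b}_{n-2}$/$\mathbf{b}_{n-4}$ compatibility is used, and your observation about the $n$-independent central block $\mathbf{b}_3'$ is exactly the point) that the restriction of ${}^s\vartheta_0\cdot{}^s\hat\lambda_{n-2}^-$ to $RP_{n-3}^-$ coincides with $(\tilde\vartheta_0\cdot\hat\lambda^-)\!\downarrow_{RP_{n-3}^-}$. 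Then Mackey (via $RQ_K\cap UL_n^-=RP_{n-3}^-$) and Frobenius reciprocity show that ${}^-\psi_\vartheta$ is a \emph{subcharacter} of $({}^s\nu)\!\uparrow_{RQ_K}^P$, and the degree computation promotes this to equality. So keep your step~(3) analysis of the linear character, but replace the direct subgroup identification in step~(2) by this ``restrict to a common subgroup, then Mackey, then compare degrees'' argument.
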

\begin{proof}
(a)~A straightforward computation shows that $t$ normalizes $L'$ and
$L$. It follows that ${}^t\sigma\smash\downarrow^{{}^tP}_{L} =
{}^t\sigma$ proving (a).

\medskip

(b)~Note that $s$ normalizes $L_K$ so that ${}^s\nu \in \Irr(L_K)$. We
first consider the special case that $A_{n-2} \le \ker(\nu)$. Then
${}^s\nu = \nu$ since $s$ centralizes $L_{n-2}'$. Let $\tilde{\nu}$ be
the irreducible character of $\tilde{P}_{n-2}$ corresponding to $\nu$
as in Section~\ref{subsec:charsPn}. By Mackey's Theorem
we have 
\begin{eqnarray*}
\langle (\hat{\lambda}^0 \cdot
\tilde{\nu})\smash\uparrow_{U\tilde{P}_{n-2}}^{UQ_K},
\nu\smash\uparrow_{RQ_K}^{UQ_K} \rangle_{UQ_K} & = & 
\langle \hat{\lambda}^0 \cdot \tilde{\nu},
\nu\smash\uparrow_{RQ_K}^{UQ_K}\smash\downarrow^{UQ_K}_{U\tilde{P}_{n-2}}
\rangle_{U\tilde{P}_{n-2}} \\
& = & \langle \hat{\lambda}^0 \cdot \tilde{\nu},
\nu\smash\downarrow^{RQ_K}_{R\tilde{P}_{n-2}}\smash\uparrow_{R\tilde{P}_{n-2}}^{U\tilde{P}_{n-2}} 
\rangle_{U\tilde{P}_{n-2}}\\
& = & \langle \hat{\lambda}^0\smash\downarrow^{U\tilde{P}_{n-2}}_{R\tilde{P}_{n-2}} \cdot \tilde{\nu},
\tilde{\nu} \rangle_{R\tilde{P}_{n-2}}\\
& = & \langle \tilde{\nu}, \tilde{\nu} \rangle_{R\tilde{P}_{n-2}} = 1
\end{eqnarray*}
since $R \le \ker(\hat{\lambda}^0)$ and 
$\hat{\lambda}^0\smash\downarrow^{U\tilde{P}_{n-2}}_{\tilde{P}_{n-2}}
= 1_{\tilde{P}_{n-2}}$. It follows that ${}^0\psi_\nu = (\hat{\lambda}^0 \cdot
\tilde{\nu})\smash\uparrow_{U\tilde{P}_{n-2}}^P$ is an
irreducible constituent of $\nu\smash\uparrow_{RQ_K}^P = ({}^s\nu)\smash\uparrow_{RQ_K}^P$.

Furthermore $\nu$ is a constituent of
$\nu\smash\uparrow_{RQ_K}^{UQ_K}$. Hence 
${}^1\psi_\Sigma = ({}^s\nu)\smash\uparrow_{UQ_K}^P =
\nu\smash\uparrow_{UQ_K}^P$ is a subcharacter of
$\nu\smash\uparrow_{RQ_K}^P = ({}^s\nu)\smash\uparrow_{RQ_K}^P$. Hence
$({}^s\nu)\smash\uparrow_{RQ_K}^P = {}^0\psi_\nu + {}^1\psi_\Sigma + (\dots)$.
Comparing degrees we get 
$({}^s\nu)\smash\uparrow_{RQ_K}^P = {}^0\psi_\nu + {}^1\psi_\Sigma$. 

Now we deal with the general case. Write 
$\nu = 1_A \boxtimes \zeta \boxtimes \nu'$ where $\zeta \in
\Irr(A_{n-2})$ and $\nu' \in \Irr(L_{n-2}')$. Then
${}^s\nu = {}^s\zeta \boxtimes 1_{A_{n-2}} \boxtimes \nu'$. 
Considering ${}^s\zeta$ as a linear character of $P$ via
inflation we obtain $({}^s\nu)\smash\uparrow_{RQ_K}^P = {}^s\zeta
\cdot ({}^0\psi_{\nu'} + {}^1\psi_{\Sigma'})$ where 
$\Sigma' := R_{L_K}^L(\nu')$. Hence we get
\[
({}^s\nu)\smash\uparrow_{RQ_K}^P = {}^s\zeta
\cdot {}^0\psi_{\nu'} + {}^s\zeta
\cdot {}^1\psi_{\Sigma'} = {}^0\psi_{\nu} + {}^1\psi_\Sigma.
\]

\medskip


(c)~By assumption, $\nu \in \Irr(P_{n-2})$ is of Type 0.
Let $\lambda^0_{n-2}$ be the irreducible character of $U_{n-2}$
analogous to $\lambda^0 \in \Irr(U)$ and let $\hat{\lambda}^0_{n-2}$
be the extension of $\lambda_{n-2}^0$ to $U_{n-2}\tilde{P}_{n-4}$ such
that
$\hat{\lambda}^0_{n-2}\smash\downarrow^{U_{n-2}\tilde{P}_{n-4}}_{\tilde{P}_{n-4}}
  = 1_{\tilde{P}_{n-4}}$. By definition
\begin{equation} \label{eq:type0}
\nu = (\hat{\lambda}_{n-2}^0 \cdot
\infl_{\tilde{P}_{n-4}}^{U_{n-2}\tilde{P}_{n-4}}(\tilde{\nu}_0))\smash\uparrow_{U_{n-2}\tilde{P}_{n-4}}^{P_{n-2}}. 
\end{equation}
As before we set $Q_K' := A \times U_{n-2}\tilde{P}_{n-4} \le Q_K$. 
We inflate the characters in (\ref{eq:type0}) over the normal subgroup
$RA$ to $RQ_K'$ and $RQ_K$, respectively. Suppressing the symbols for
inflation we obtain:
\[
\nu = (\hat{\lambda}_{n-2}^0 \cdot \tilde{\nu}_0)\smash\uparrow_{RQ_K'}^{RQ_K}. 
\]
Since $r \in P$ and $RQ_K$ is $s$-invariant we have
\begin{eqnarray} \label{eq:indconjr}
({}^s\nu)\smash\uparrow_{RQ_K}^P & = &
  {}^r(({}^s\nu)\smash\uparrow_{RQ_K}^P) = \nonumber
  {}^r(({}^s((\hat{\lambda}_{n-2}^0 \cdot
  \tilde{\nu}_0)\smash\uparrow_{RQ_K'}^{RQ_K}))\smash\uparrow_{RQ_K}^P)\\
  & = & {}^r(({}^s\hat{\lambda}_{n-2}^0 \cdot \nonumber
  {}^s\tilde{\nu}_0)\smash\uparrow_{{}^s(RQ_K')}^{RQ_K}\smash\uparrow_{RQ_K}^P)
  = {}^r(({}^s\hat{\lambda}_{n-2}^0 \cdot
  {}^s\tilde{\nu}_0)\smash\uparrow_{{}^s(RQ_K')}^P)\\
  & = & ({}^{rs}\hat{\lambda}_{n-2}^0 \cdot
  {}^{rs}\tilde{\nu}_0)\smash\uparrow_{{}^{rs}(RQ_K')}^P.
\end{eqnarray}
We set $\widetilde{\Sigma} := (1_A \boxtimes
\Sigma)\smash\downarrow^{A \times P_{n-2}}_{\tilde{P}_{n-2}}$ so that
${}^0\psi_\Sigma = (\hat{\lambda}^0
\cdot\infl^{U\tilde{P}_{n-2}}_{\tilde{P}_{n-2}}(\widetilde{\Sigma}))\smash\uparrow_{U\tilde{P}_{n-2}}^P$.

By Lemma~\ref{la:intersec_RQK} we have 
\begin{equation} \label{eq:intlem}
U\tilde{P}_{n-2} \cap {}^{rs}(RQ_K') = ({}^rR)Y
\end{equation}
with $Y \le \tilde{P}_{n-2}$ and $A \times Y = A \times ({}^rP_{n-2} \cap P_{n-2})$.
Now
\[
{}^{rs}U_{n-2} = \{\begin{bmatrix}
 1 & * & . & * & * & * & . & * & *\\
&1&.&.&.&.&.&.&*\\&&1&.&.&.&.&.&.\\&&&&&&.&.&*\\
&&&&I_{n-6}&&.&.&*\\&&&&&&.&.&*\\&&&&&&1&.&.\\&&&&&&&1&*\\
&&&&&&&&1\end{bmatrix}\} \le U. 
\]
Let $v = \begin{bmatrix}v_{m-2}, \dots, v_1, v_0, v_1',
  \dots, v_{m-2}'\end{bmatrix}^\tr \in \F_q^{n-4}$. Then
\[
{}^{rs}\mbu_{n-2}(v) = \mbu_n(\begin{bmatrix}*, 0,
    v_{m-2}'\end{bmatrix}^\tr)
\]
and thus ${}^{rs}\hat{\lambda}_{n-2}^0({}^{rs}\mbu_{n-2}(v)) =
\hat{\lambda}_{n-2}^0(\mbu_{n-2}(v)) = \xi(v_{m-2}') =
\lambda^0({}^{rs}\mbu_{n-2}(v))$. Hence the characters
${}^{rs}\hat{\lambda}_{n-2}^0$ and $\lambda^0$ coincide on
${}^{rs}U_{n-2}$. As above, we consider 
$\hat{\lambda}_{n-2}^0 \cdot \tilde{\nu}_0$ and $\tilde{\nu}_0$ as
irreducible characters of $RQ_K'$ via inflation. Thus 
${}^{rs}\hat{\lambda}_{n-2}^0 \cdot {}^{rs}\tilde{\nu}_0 \in
\Irr({}^{rs}(RQ_K'))$. By restriction via (\ref{eq:intlem}) we can
view ${}^{rs}\hat{\lambda}_{n-2}^0 \cdot {}^{rs}\tilde{\nu}_0$ as a
character of $({}^rR)Y \le UY$. An elementary matrix calculation shows
that ${}^sR \subseteq RU_{n-2} \subseteq \ker(\tilde{\nu}_0)$. It
follows that 
\[
{}^rR = \{\begin{bmatrix}1 & * & . & * & * & * & * & * & *\\
&1&.&.&.&.&.&.&*\\&&1&.&.&.&.&.&*\\&&&&&&.&.&*\\
&&&&I_{n-6}&&.&.&*\\&&&&&&.&.&*\\&&&&&&1&.&.\\&&&&&&&1&*\\
&&&&&&&&1\end{bmatrix}\} \le \ker({}^{rs}\tilde{\nu}_.).
\]
Since $Y$ normalizes ${}^rR$ and $U$ the character
${}^{rs}\tilde{\nu}_0$ has a unique extension to $UY$ 
with $U$ in its kernel and we denote this extension also by
${}^{rs}\tilde{\nu}_0$. An elementary matrix calculation shows that 
\[
\{\begin{bmatrix}1 & . & . & . & . & . & * & . & *\\
&1&.&.&.&.&.&.&.\\&&1&.&.&.&.&.&*\\&&&&&&.&.&.\\
&&&&I_{n-6}&&.&.&.\\&&&&&&.&.&.\\&&&&&&1&.&.\\&&&&&&&1&.\\
&&&&&&&&1\end{bmatrix}^{rs}\} \le \ker(\hat{\lambda}_{n-2}^.)
\]
and it follows that ${}^{rs}\hat{\lambda}_{n-2}^0$ coincides with
$\lambda^0$ and $\hat{\lambda}^0$ on ${}^rR$. Similarly, we see that 
$Y^{rs} \le \ker(\hat{\lambda}_{n-2}^0)$, hence
$Y \le \ker({}^{rs}\hat{\lambda}_{n-2}^0), \ker(\hat{\lambda}^0)$.
Thus ${}^{rs}\hat{\lambda}_{n-2}^0$ coincides with $\hat{\lambda}^0$
on~$Y$, too. Therefore, $\hat{\lambda}^0$ considered as an irreducible
character of $UY$ via restriction is an extension of 
${}^{rs}\hat{\lambda}_{n-2}^0$ to $UY$ and $\hat{\lambda}^0 \cdot
{}^{rs}\tilde{\nu}_0$ is an extension of 
${}^{rs}\hat{\lambda}_{n-2}^0 \cdot {}^{rs}\tilde{\nu}_0$ from
$({}^rR)Y$ to $UY$. Thus $\hat{\lambda}^0 \cdot
{}^{rs}\tilde{\nu}_0$ is a subcharacter of
$({}^{rs}\hat{\lambda}_{n-2}^0 \cdot
{}^{rs}\tilde{\nu}_0)\smash\uparrow_{({}^rR)Y}^{UY}$
and therefore $(\hat{\lambda}^0 \cdot
{}^{rs}\tilde{\nu}_0)\smash\uparrow_{UY}^{U\tilde{P}_{n-2}}$ 
is a subcharacter of $({}^{rs}\hat{\lambda}_{n-2}^0 \cdot
{}^{rs}\tilde{\nu}_0)\smash\uparrow_{({}^rR)Y}^{U\tilde{P}_{n-2}}$.
The isomorphism
\[
\tilde{P}_{n-2} \to P_{n-2}, 
\begin{bmatrix}
a& & &     & \\ &a&*&*    & \\ & &x&*    & \\
 & & &a^{-1}& \\ & & &     &a^{-1}\end{bmatrix} \mapsto
\begin{bmatrix}
1& & &     & \\ &a&*&*    & \\ & &x&*    & \\
 & & &a^{-1}& \\ & & &     &1\end{bmatrix}
\]
maps $Y$ onto ${}^rP_{n-2} \cap P_{n-2}$ and hence induces bijections
$\Irr(P_{n-2}) \to \Irr(\tilde{P}_{n-2})$ and 
$\Irr({}^rP_{n-2} \cap P_{n-2}) \to \Irr(Y)$ which we both
denote by $\phi$. We extend $\phi$ additively to non-irreducible
characters of these groups. An elementary matrix calculation shows
that $\widetilde{\Sigma} = \phi(\Sigma)$ and 
${}^{rs}\tilde{\nu}_0 = \phi({}^r\nu_0)$. The functoriality of
induction implies that $\phi$ commutes with induction, hence
\[
\widetilde{\Sigma} = \phi(\Sigma) =
\phi(({}^r\nu_0)\smash\uparrow_{{}^rP_{n-2} \cap P_{n-2}}^{P_{n-2}}) = 
\phi({}^r\nu_0)\smash\uparrow_Y^{\tilde{P}_{n-2}} = 
({}^{rs}\tilde{\nu}_0)\smash\uparrow_Y^{\tilde{P}_{n-2}}.
\]
Again suppressing the notation for inflation we get that 
\[
\hat{\lambda}^0 \cdot \widetilde{\Sigma} = \hat{\lambda}^0 \cdot
({}^{rs}\tilde{\nu}_0)\smash\uparrow_{UY}^{U\tilde{P}_{n-2}} = 
(\hat{\lambda}^0 \cdot {}^{rs}\tilde{\nu}_0)\smash\uparrow_{UY}^{U\tilde{P}_{n-2}}
\] 
is a subcharacter of
\[
({}^{rs}\hat{\lambda}_{n-2}^0 \cdot
{}^{rs}\tilde{\nu}_0)\smash\uparrow_{({}^rR)Y}^{U\tilde{P}_{n-2}} =
({}^{rs}\hat{\lambda}_{n-2}^0 \cdot 
{}^{rs}\tilde{\nu}_0)\smash\downarrow^{{}^{rs}(RQ_K')}_{{}^{rs}(RQ_K')
  \cap U\tilde{P}_{n-2}}\smash\uparrow_{{}^{rs}(RQ_K') \cap
  U\tilde{P}_{n-2}}^{U\tilde{P}_{n-2}}.
\]
It follows from Mackey's Theorem and (\ref{eq:indconjr}) that 
$\hat{\lambda}^0 \cdot \widetilde{\Sigma}$ is a subcharacter of
\[
({}^{rs}\hat{\lambda}_{n-2}^0 \cdot 
{}^{rs}\tilde{\nu}_0)\smash\uparrow_{{}^{rs}(RQ_K')}^P\smash\downarrow^P_{U\tilde{P}_{n-2}}
= ({}^s\nu)\smash\uparrow_{RQ_K}^P\smash\downarrow^P_{U\tilde{P}_{n-2}}.
\]
Write $\hat{\lambda}^0 \cdot \widetilde{\Sigma} = \sum_i m_i\chi_i$
with $\chi_i \in \Irr(U\tilde{P}_{n-2})$, hence 
\[
({}^s\nu)\smash\uparrow_{RQ_K}^P\smash\downarrow^P_{U\tilde{P}_{n-2}}
= \sum_i m_i\chi_i + (\dots)
\]
and thus
$\langle\chi_i\smash\uparrow_{U\tilde{P}_{n-2}}^P,
({}^s\nu)\smash\uparrow_{RQ_K}^P \rangle_P = \langle\chi_i,
({}^s\nu)\smash\uparrow_{RQ_K}^P\smash\downarrow^P_{U\tilde{P}_{n-2}}\rangle_{U\tilde{P}_{n-2}}
\ge m_i$. Note that the characters
$\chi_i\smash\uparrow_{U\tilde{P}_{n-2}}^P$ are irreducible by
Clifford theory. Hence, ${}^0\psi_\Sigma = (\hat{\lambda}^0 \cdot 
\widetilde{\Sigma})\smash\uparrow_{U\tilde{P}_{n-2}}^P$ is a
subcharacter of $({}^s\nu)\smash\uparrow_{RQ_K}^P$. We have
\begin{eqnarray*}
({}^s\nu)\smash\uparrow_{RQ_K}^P(1) & = & [P:RQ_K] \cdot \nu(1) =
  \frac{|U||L|}{|R||Q_K|} (q^{n-5}-1)\nu_0(1)\\
& = & q \cdot \frac{(q-1) \cdot |\SO_{n-2}(q)|}{(q-1) \cdot |P_{n-2}|}
  (q^{2m-4}-1)\nu_0(1)\\
& = & q \cdot \frac{(q^{2m-2}-1)(q^{2m-4}-1)}{q-1} \cdot \nu_0(1)
\end{eqnarray*}
and
\begin{eqnarray*}
{}^0\psi_\Sigma & = & (q^{n-3}-1) \cdot \Sigma(1) = (q^{n-3}-1) \cdot
[P_{n-2} : {}^rP_{n-2} \cap P_{n-2}] \cdot \nu_0(1)\\
& = & (q^{n-3}-1) \cdot \frac{|P_{n-2}|}{q^{n-5} \cdot (q-1) \cdot
  |P_{n-4}|} \cdot \nu_0(1)\\
& = & q \cdot \frac{(q^{2m-2}-1)(q^{2m-4}-1)}{q-1} \cdot \nu_0(1),
\end{eqnarray*}
thus $({}^s\nu)\smash\uparrow_{RQ_K}^P = {}^0\psi_\Sigma$ proving (c).

\medskip

(d)~We denote the inflation of the character $\vartheta_0 \cdot
\hat{\lambda}_{n-2}^+ \in \Irr(U_{n-2}L_{n-2}^+)$ to the group 
$R(A \times U_{n-2}L_{n-2}^+)$ also by $\vartheta_0 \cdot
\hat{\lambda}_{n-2}^+ $. Hence 
\[
{}^s\vartheta_0 \cdot {}^s\hat{\lambda}_{n-2}^+ \in \Irr({}^sR(A_{n-2} \times {}^s(U_{n-2}L_{n-2}^+))).
\]
An elementary matrix calculation shows that
$RP_{n-3}^+ \le {}^sR(A_{n-2} \times {}^s(U_{n-2}L_{n-2}^+))$.

The proof now proceeds in several steps.

\medskip

\textbf{Step 1:} We have $({}^s\vartheta_0 \cdot
{}^s\hat{\lambda}_{n-2}^+)\smash\downarrow^{{}^sR(A_{n-2}  
\times {}^s(U_{n-2}L_{n-2}^+))}_{RP_{n-3}^+} = (\tilde{\vartheta}_0 \cdot
\hat{\lambda}^+)\smash\downarrow^{UP_{n-3}^+}_{RP_{n-3}^+}$.

\textbf{Proof of Step 1:} Let $\mbu_n(\begin{bmatrix}v_{m-1}, 
  \dots, v_0, \dots, v_{m-2}', 0\end{bmatrix}^\tr) \in R$. Then
\begin{eqnarray*}
&&({}^s\vartheta_0 \cdot
{}^s\hat{\lambda}_{n-2}^+)(\mbu_n(\begin{bmatrix}v_{m-1}, 
  \dots, v_0, \dots, v_{m-2}', 0\end{bmatrix}^\tr))\\
& = & (\vartheta_0 \cdot \hat{\lambda}_{n-2}^+)(\mbu_n(\begin{bmatrix}v_{m-1}, 
  \dots, v_0, \dots, v_{m-2}', 0\end{bmatrix}^\tr)^s)\\
& = & \vartheta_0(1) \xi(v_0) = \vartheta_0(1) \cdot 
\hat{\lambda}^+(\mbu_n(\begin{bmatrix}v_{m-1}, 
  \dots, v_0, \dots, v_{m-2}', 0\end{bmatrix}^\tr)).
\end{eqnarray*}
It follows that $({}^s\vartheta_0 \cdot
{}^s\hat{\lambda}_{n-2}^+)\smash\downarrow^{{}^sR(A_{n-2}  
\times {}^s(U_{n-2}L_{n-2}^+))}_{RP_{n-3}^+} = \varphi \cdot 
(\hat{\lambda}^+\smash\downarrow^{UP_{n-3}^+}_{RP_{n-3}^+})$ where
$\varphi$ is the inflation of some character of $P_{n-3}^+$ to
$RP_{n-3}^+$. Note that $\varphi$ is uniquely determined by its
restriction to $P_{n-3}^+$. We have
\begin{gather*}
\varphi(\begin{bmatrix}a &   &   &   &   &   &  \\
  & b & * & . & * & * &  \\
  &   & A & . & B & * &  \\
  &   & . & a & . & . &  \\
  &   & C & . & D & * &  \\
  &   &   &   &   & b^{-1} &  \\
  &   &   &   &   &   & a
\end{bmatrix}) = ({}^s\vartheta_  \cdot
{}^s\hat{\lambda}_{n-2}^+)(
\begin{bmatrix}
a &   &   &   &   &   &  \\
  & b & * & . & * & * &  \\
  &   & A & . & B & * &  \\
  &   & . & a & . & . &  \\
  &   & C & . & D & * &  \\
  &   &   &   &   & b^{-1} &  \\
  &   &   &   &   &   & a
\end{bmatrix})\\
= (\vartheta_  \cdot \hat{\lambda}_{n-2}^+)( \hspace{- .1cm}
\begin{bmatrix}
a &   &   &   &   &   &  \\
  & b & * & . & * & * &  \\
  &   & A & . & B & * &  \\
  &   & . & a & . & . &  \\
  &   & C & . & D & * &  \\
  &   &   &   &   & b^{-1} &  \\
  &   &   &   &   &   & a
\end{bmatrix}^s \hspace{- .1cm} )
= (\vartheta_  \cdot \hat{\lambda}_{n-2}^+)( \hspace{- .1cm} 
\begin{bmatrix}
b & . & * & . & * & . & *\\
  & a & . & . & . & . & .\\
  &   & A & . & B & . & *\\
  &   & . & a & . & . & .\\
  &   & C & . & D & . & *\\
  &   &   &   &   & a & .\\
  &   &   &   &   &   & b^{-1}
\end{bmatrix} \hspace{- .1cm} )
\end{gather*}
\begin{gather*}
= \vartheta_0(\begin{bmatrix}
a &   &   &   &   \\
  & A & . & B &  \\
  & . & a & . &  \\
  & C & . & D &  \\
  &   &   &   & a\end{bmatrix})
= \tilde{\vartheta}_0(
\begin{bmatrix}
a &   &   &   &   &   &  \\
  & b & * & . & * & * &  \\
  &   & A & . & B & * &  \\
  &   & . & a & . & . &  \\
  &   & C & . & D & * &  \\
  &   &   &   &   & b^{-1} &  \\
  &   &   &   &   &   & a
\end{bmatrix}),
\end{gather*}
thus $\varphi\smash\downarrow^{RP_{n-3}^+}_{P_{n-3}^+} =
\tilde{\vartheta}_0\smash\downarrow^{RP_{n-3}^+}_{P_{n-3}^+}$ and then 
\[
({}^s\vartheta_0 \cdot
{}^s\hat{\lambda}_{n-2}^+)\smash\downarrow^{{}^sR(A_{n-2}  
\times {}^s(U_{n-2}L_{n-2}^+))}_{RP_{n-3}^+} = \tilde{\vartheta}_0 \cdot 
(\hat{\lambda}^+\smash\downarrow^{UP_{n-3}^+}_{RP_{n-3}^+}) = 
(\tilde{\vartheta}_0 \cdot \hat{\lambda}^+)\smash\downarrow^{UP_{n-3}^+}_{RP_{n-3}^+}.
\]

\medskip

\textbf{Step 2:} The character $\vartheta \cdot \hat{\lambda}^+$ is a
subcharacter of ${}^s\nu\smash\uparrow_{RQ_K}^P\smash\downarrow^P_{UL_n^+}$.

\textbf{Proof of Step 2:} It follows from Step~1 that 
$\tilde{\vartheta}_0 \cdot \hat{\lambda}^+$ is an irreducible
constituent of $({}^s\vartheta_0 \cdot
{}^s\hat{\lambda}_{n-2}^+)\smash\downarrow^{{}^sR(A_{n-2}  
\times {}^s(U_{n-2}L_{n-2}^+))}_{RP_{n-3}^+}\smash\uparrow_{RP_{n-3}^+}^{UP_{n-3}^+}$. 
Thus 
\[
(\tilde{\vartheta}_0 \cdot \hat{\lambda}^+)\smash\uparrow_{UP_{n-3}^+}^{UL_n^+}
= \tilde{\vartheta}_0\smash\uparrow_{UP_{n-3}^+}^{UL_n^+} \cdot \hat{\lambda}^+
= \vartheta \cdot \hat{\lambda}^+
\]
is a subcharacter of $({}^s\vartheta_0 \cdot
{}^s\hat{\lambda}_{n-2}^+)\smash\downarrow^{{}^sR(A_{n-2}  
\times {}^s(U_{n-2}L_{n-2}^+))}_{RP_{n-3}^+}\smash\uparrow_{RP_{n-3}^+}^{UL_n^+}$. 
By construction, the character $\nu \in \Irr(RQ_K)$ is given by
$\nu = (\vartheta_0 \cdot \hat{\lambda}_{n-2}^+)\smash\uparrow_{R(A \times
  U_{n-2}L_{n-2}^+)}^{RQ_K}$ and hence 
\begin{equation} \label{eq:constnus}
{}^s\nu = ({}^s\vartheta_0 \cdot {}^s\hat{\lambda}_{n-2}^+)\smash\uparrow_{{}^sR({}^sA \times
  {}^s(U_{n-2}L_{n-2}^+))}^{RQ_K} = ({}^s\vartheta_0 \cdot
{}^s\hat{\lambda}_{n-2}^+)\smash\uparrow_{{}^sR(A_{n-2} \times {}^s(U_{n-2}L_{n-2}^+))}^{RQ_K}.
\end{equation}
It follows from (\ref{eq:constnus}) that ${}^s\vartheta_0 \cdot {}^s\hat{\lambda}_{n-2}^+$
is a constituent of ${}^s\nu\smash\downarrow^{RQ_K}_{{}^sR(A_{n-2} \times {}^s(U_{n-2}L_{n-2}^+))}$.
Hence $({}^s\vartheta_0 \cdot
{}^s\hat{\lambda}_{n-2}^+)\smash\downarrow^{{}^sR(A_{n-2} \times
  {}^s(U_{n-2}L_{n-2}^+))}_{RP_{n-3}^+}\smash\uparrow_{RP_{n-3}^+}^{UL_n^+}$ is a subcharacter of
${}^s\nu\smash\downarrow^{RQ_K}_{RP_{n-3}^+}\smash\uparrow_{RP_{n-3}^+}^{UL_n^+}$
and from the beginning of the proof of Step~2 we see that 
$\vartheta \cdot \hat{\lambda}^+$ is a subcharacter of
${}^s\nu\smash\downarrow^{RQ_K}_{RP_{n-3}^+}\smash\uparrow_{RP_{n-3}^+}^{UL_n^+}$. 
An elementary matrix calculation shows that
\[
RQ_K \cap UL_n^+ = RP_{n-3}^+.
\]
Thus Mackey's Theorem implies that the character
${}^s\nu\smash\downarrow^{RQ_K}_{RP_{n-3}^+}\smash\uparrow_{RP_{n-3}^+}^{UL_n^+}$
is a subcharacter of 
${}^s\nu\smash\uparrow_{RQ_K}^P\smash\downarrow^P_{UL_n^+}$ 
and we can
conclude that $\vartheta \cdot \hat{\lambda}^+$ is a subcharacter of 
${}^s\nu\smash\uparrow_{RQ_K}^P\smash\downarrow^P_{UL_n^+}$.

\medskip

\textbf{Step 3:} We have $({}^s\nu)\smash\uparrow_{RQ_K}^P = {}^+\psi_\vartheta$.

\textbf{Proof of Step 3:} Write 
$\vartheta \cdot \hat{\lambda}^+ = \sum_i m_i\chi_i$
with $\chi_i \in \Irr(UL_n^+)$. By Step~2 we have
\[
{}^s\nu\smash\uparrow_{RQ_K}^P\smash\downarrow^P_{UL_n^+}
= \sum_i m_i\chi_i + (\dots)
\]
and thus
$\langle\chi_i\smash\uparrow_{UL_n^+}^P,
({}^s\nu)\smash\uparrow_{RQ_K}^P \rangle_P = \langle\chi_i,
({}^s\nu)\smash\uparrow_{RQ_K}^P\smash\downarrow^P_{UL_n^+}\rangle_{UL_n^+}
\ge m_i$. Note that the characters
$\chi_i\smash\uparrow_{UL_n^+}^P$ are irreducible by Clifford theory.
Hence, ${}^+\psi_\vartheta = (\vartheta \cdot
\hat{\lambda}^+)\smash\uparrow_{UL_n^+}^P$ is a  
subcharacter of $({}^s\nu)\smash\uparrow_{RQ_K}^P$. Computing the
degrees we get
\begin{eqnarray*}
{}^+\psi_\vartheta(1) & = & \frac12 q^{m-1}(q^{m-1}+1)(q-1)\vartheta(1)\\
& = & \frac12 q^{m-1}(q^{m-1}+1)(q-1) \cdot
\frac{|L_n^+|}{|P_{n-3}^+|} \cdot \vartheta_0(1)\\
& = & \frac12 q^{m-1}(q^{m-1}+1)(q-1) \cdot
\frac{(q^{m-1}-1) \cdot (q^{m-2}+1)}{q-1} \cdot \vartheta_0(1)
\end{eqnarray*}
and
\begin{eqnarray*}
({}^s\nu)\smash\uparrow_{RQ_K}^P(1) & = & \frac{|P|}{|RQ_K|} \cdot
  \nu(1)\\
& = & \frac{|U| \cdot |L_n|}{|R| \cdot (q-1) \cdot |P_{n-2}|}
  \cdot \frac12 q^{m-2} (q^{m-2}+1)(q-1) \cdot \vartheta_0(1)\\
& = & \frac12 q^{m-1}(q^{m-1}+1)(q-1) \cdot
\frac{(q^{m-1}-1) \cdot (q^{m-2}+1)}{q-1} \cdot \vartheta_0(1).
\end{eqnarray*}
Hence ${}^+\psi_\vartheta(1) = ({}^s\nu)\smash\uparrow_{RQ_K}^P(1)$
and then $({}^s\nu)\smash\uparrow_{RQ_K}^P = {}^+\psi_\vartheta$.

\medskip

(e)~Since $P_3$ does not have any characters of Type~$-$ we can assume 
$m \ge 3$. We denote the inflation of $\vartheta_0 \cdot
\hat{\lambda}_{n-2}^- \in \Irr(U_{n-2}L_{n-2}^-)$ to the group 
$R(A \times U_{n-2}L_{n-2}^-)$ also by $\vartheta_0 \cdot
\hat{\lambda}_{n-2}^- $. Hence 
\[
{}^s\vartheta_0 \cdot {}^s\hat{\lambda}_{n-2}^- \in \Irr({}^sR(A_{n-2} \times {}^s(U_{n-2}L_{n-2}^-))).
\]
An elementary matrix calculation shows that
$RP_{n-3}^- \le {}^sR(A_{n-2} \times {}^s(U_{n-2}L_{n-2}^-))$.

The proof now proceeds in several steps.

\medskip

\textbf{Step 1:} We have 
$({}^s\vartheta_0 \cdot
{}^s\hat{\lambda}_{n-2}^-)\smash\downarrow^{{}^sR(A_{n-2}  
\times {}^s(U_{n-2}L_{n-2}^-))}_{RP_{n-3}^-} = 
(\tilde{\vartheta}_0 \cdot \hat{\lambda}^-)\smash\downarrow^{UP_{n-3}^-}_{RP_{n-3}^-}$.

\textbf{Proof of Step 1:} Let $\mathbf{v} = \begin{bmatrix}v_{m-1}, 
  \dots, v_0, \dots, v_{m-2}', 0\end{bmatrix}^\tr \in \F_q^{n-2}$. Then
\begin{eqnarray*}
&&({}^s\vartheta_0 \cdot
{}^s\hat{\lambda}_{n-2}^-)(\mbu_n(\mathbf{v}))
= (\vartheta_0 \cdot \hat{\lambda}_{n-2}^-)(\mbu_n(\mathbf{v})^s)\\
& = & \vartheta_0(1) \xi((\mathbf{b}_{n-4}\begin{bmatrix}v_{m-2},
  \dots, v_0, \dots, v_{m-2}'\end{bmatrix}^\tr)_0)\\
& = & \vartheta_0(1) \xi((\mathbf{b}_{n-2}\mathbf{v})_0)
= \vartheta_0(1) \cdot \hat{\lambda}^-(\mbu_n(\mathbf{v})).
\end{eqnarray*}
It follows that $({}^s\vartheta_0 \cdot
{}^s\hat{\lambda}_{n-2}^-)\smash\downarrow^{{}^sR(A_{n-2}  
\times {}^s(U_{n-2}L_{n-2}^-))}_{RP_{n-3}^-} = \varphi \cdot 
(\hat{\lambda}^-\smash\downarrow^{UP_{n-3}^-}_{RP_{n-3}^-})$ where
$\varphi$ is the inflation of some character of $P_{n-3}^-$ to
$RP_{n-3}^-$. Note that $\varphi$ is uniquely determined by its
restriction to $P_{n-3}^-$. We have
\begin{eqnarray*}
&& \varphi(\mathbf{s}_n(
\mathbf{b}_{n-2}^{-1} \hspace{-0,1cm} \begin{bmatrix}
b & * & . & * & \hspace{- ,1cm} *\\
  & A & . & B & \hspace{- ,1cm} *\\
  & . & a & . & \hspace{- ,1cm} .\\
  & C & . & D & \hspace{- ,1cm} *\\
  &   &   &   & \hspace{- ,1cm} b^{-1}
\end{bmatrix} \hspace{- ,1cm} \mathbf{b}_{n-2}, a))\\
& = & ({}^s\vartheta_  \cdot
{}^s\hat{\lambda}_{n-2}^-)(
\mathbf{s}_n(
\mathbf{b}_{n-2}^{-1} \hspace{- ,1cm} \begin{bmatrix}
b & * & . & * & \hspace{- ,1cm} *\\
  & A & . & B & \hspace{- ,1cm} *\\
  & . & a & . & \hspace{- ,1cm} .\\
  & C & . & D & \hspace{- ,1cm} *\\
  &   &   &   & \hspace{- ,1cm} b^{-1}
\end{bmatrix} \hspace{- ,1cm} \mathbf{b}_{n-2}, a))\\
& = & (\vartheta_  \cdot \hat{\lambda}_{n-2}^-)(
\mathbf{s}_n(
\mathbf{b}_{n-2}^{-1} \hspace{- ,1cm} \begin{bmatrix}
b & * & . & * & \hspace{- ,1cm} *\\
  & A & . & B & \hspace{- ,1cm} *\\
  & . & a & . & \hspace{- ,1cm} .\\
  & C & . & D & \hspace{- ,1cm} *\\
  &   &   &   & \hspace{- ,1cm} b^{-1}
\end{bmatrix} \hspace{- ,1cm} \mathbf{b}_{n-2}, a)^s)
\end{eqnarray*}
\begin{eqnarray*}
& = & (\vartheta_  \cdot \hat{\lambda}_{n-2}^-)(
\mathbf{s}_n(
\mathbf{b}_{n-2}^{-1} \hspace{- ,1cm} \begin{bmatrix}
a &   &  &   & \hspace{- ,1cm}  \\
  & A & . & B & \hspace{- ,1cm}  \\
  & . & a & . & \hspace{- ,1cm}  \\
  & C & . & D & \hspace{- ,1cm}  \\
  &   &   &   & \hspace{- ,1cm} a
\end{bmatrix} \hspace{- ,1cm} \mathbf{b}_{n-2}, b) \cdot
\begin{bmatrix}1 & * & *\\   & I_{n-2} & *\\   &   & 1\end{bmatrix})\\
& = & \vartheta (\mathbf{s}_{n-2}(
\mathbf{b}_{n-4}^{-1} \hspace{- ,1cm} 
\begin{bmatrix}A & . & B\\. & a & .\\C & . & D
\end{bmatrix} \hspace{- ,1cm} \mathbf{b}_{n-4}, a))
= 
\tilde{\vartheta} (
\mathbf{s}_n(
\mathbf{b}_{n-2}^{-1} \hspace{- ,1cm} \begin{bmatrix}
b & * & . & * & \hspace{- ,1cm} *\\
  & A & . & B & \hspace{- ,1cm} *\\
  & . & a & . & \hspace{- ,1cm} .\\
  & C & . & D & \hspace{- ,1cm} *\\
  &   &   &   & \hspace{- ,1cm} b^{-1}
\end{bmatrix} \hspace{- ,1cm} \mathbf{b}_{n-2}, a)),
\end{eqnarray*}
thus $\varphi\smash\downarrow^{RP_{n-3}^-}_{P_{n-3}^-} =
\tilde{\vartheta}_0\smash\downarrow^{RP_{n-3}^-}_{P_{n-3}^-}$ and then 
\[
({}^s\vartheta_0 \cdot
{}^s\hat{\lambda}_{n-2}^-)\smash\downarrow^{{}^sR(A_{n-2}  
\times {}^s(U_{n-2}L_{n-2}^-))}_{RP_{n-3}^-} = \tilde{\vartheta}_0 \cdot 
(\hat{\lambda}^-\smash\downarrow^{UP_{n-3}^-}_{RP_{n-3}^-}) = 
(\tilde{\vartheta}_0 \cdot \hat{\lambda}^-)\smash\downarrow^{UP_{n-3}^-}_{RP_{n-3}^-}.
\]

\medskip

\textbf{Step 2:} The character $\vartheta \cdot \hat{\lambda}^-$ is a subcharacter of 
${}^s\nu\smash\uparrow_{RQ_K}^P\smash\downarrow^P_{UL_n^-}$.

\textbf{Proof of Step 2:} It follows from Step~1 that 
$\tilde{\vartheta}_0 \cdot \hat{\lambda}^-$ is an irreducible
constituent of $({}^s\vartheta_0 \cdot
{}^s\hat{\lambda}_{n-2}^-)\smash\downarrow^{{}^sR(A_{n-2}  
\times {}^s(U_{n-2}L_{n-2}^-))}_{RP_{n-3}^-}\smash\uparrow_{RP_{n-3}^-}^{UP_{n-3}^-}$. 
Thus 
\[
(\tilde{\vartheta}_0 \cdot \hat{\lambda}^-)\smash\uparrow_{UP_{n-3}^-}^{UL_n^-}
= \tilde{\vartheta}_0\smash\uparrow_{UP_{n-3}^-}^{UL_n^-} \cdot \hat{\lambda}^-
= \vartheta \cdot \hat{\lambda}^-
\]
is a subcharacter of $({}^s\vartheta_0 \cdot
{}^s\hat{\lambda}_{n-2}^-)\smash\downarrow^{{}^sR(A_{n-2}  
\times {}^s(U_{n-2}L_{n-2}^-))}_{RP_{n-3}^-}\smash\uparrow_{RP_{n-3}^-}^{UL_n^-}$. 
By construction, the character $\nu \in \Irr(RQ_K)$ is given by
$\nu = (\vartheta_0 \cdot \hat{\lambda}_{n-2}^-)\smash\uparrow_{R(A \times
  U_{n-2}L_{n-2}^-)}^{RQ_K}$ and hence 
\begin{equation} \label{eq:constnus-}
{}^s\nu = ({}^s\vartheta_0 \cdot {}^s\hat{\lambda}_{n-2}^-)\smash\uparrow_{{}^sR({}^sA \times
  {}^s(U_{n-2}L_{n-2}^-))}^{RQ_K} = ({}^s\vartheta_0 \cdot
{}^s\hat{\lambda}_{n-2}^-)\smash\uparrow_{{}^sR(A_{n-2} \times {}^s(U_{n-2}L_{n-2}^-))}^{RQ_K}.
\end{equation}
It follows from (\ref{eq:constnus-}) that ${}^s\vartheta_0 \cdot {}^s\hat{\lambda}_{n-2}^-$
is a constituent of ${}^s\nu\smash\downarrow^{RQ_K}_{{}^sR(A_{n-2} \times {}^s(U_{n-2}L_{n-2}^-))}$.
Hence $({}^s\vartheta_0 \cdot
{}^s\hat{\lambda}_{n-2}^-)\smash\downarrow^{{}^sR(A_{n-2} \times
  {}^s(U_{n-2}L_{n-2}^-))}_{RP_{n-3}^-}\smash\uparrow_{RP_{n-3}^-}^{UL_n^-}$ is a subcharacter of
${}^s\nu\smash\downarrow^{RQ_K}_{RP_{n-3}^-}\smash\uparrow_{RP_{n-3}^-}^{UL_n^-}$
and from the beginning of the proof of Step~2 we see that 
$\vartheta \cdot \hat{\lambda}^-$ is a subcharacter of
${}^s\nu\smash\downarrow^{RQ_K}_{RP_{n-3}^-}\smash\uparrow_{RP_{n-3}^-}^{UL_n^-}$. 
An elementary matrix calculation shows that
\[
RQ_K \cap UL_n^- = RP_{n-3}^-.
\]
Thus Mackey's Theorem implies that the character
${}^s\nu\smash\downarrow^{RQ_K}_{RP_{n-3}^-}\smash\uparrow_{RP_{n-3}^-}^{UL_n^-}$
is a subcharacter of 
${}^s\nu\smash\uparrow_{RQ_K}^P\smash\downarrow^P_{UL_n^-}$ and we can
conclude that $\vartheta \cdot \hat{\lambda}^-$ is a subcharacter of 
${}^s\nu\smash\uparrow_{RQ_K}^P\smash\downarrow^P_{UL_n^-}$.

\medskip

\textbf{Step 3:} We have $({}^s\nu)\smash\uparrow_{RQ_K}^P = {}^-\psi_\vartheta$.

\textbf{Proof of Step 3:} Write 
$\vartheta \cdot \hat{\lambda}^- = \sum_i m_i\chi_i$
with $\chi_i \in \Irr(UL_n^-)$. By Step~2 we have
\[
{}^s\nu\smash\uparrow_{RQ_K}^P\smash\downarrow^P_{UL_n^-}
= \sum_i m_i\chi_i + (\dots)
\]
and thus
$\langle\chi_i\smash\uparrow_{UL_n^-}^P,
({}^s\nu)\smash\uparrow_{RQ_K}^P \rangle_P = \langle\chi_i,
({}^s\nu)\smash\uparrow_{RQ_K}^P\smash\downarrow^P_{UL_n^-}\rangle_{UL_n^-}
\ge m_i$. Note that the characters
$\chi_i\smash\uparrow_{UL_n^-}^P$ are irreducible by Clifford theory.
Hence, ${}^-\psi_\vartheta = (\vartheta \cdot
\hat{\lambda}^-)\smash\uparrow_{UL_n^-}^P$ is a subcharacter of
$({}^s\nu)\smash\uparrow_{RQ_K}^P$. Computing the degrees we get
\begin{eqnarray*}
{}^-\psi_\vartheta(1) & = & \frac12 q^{m-1}(q^{m-1}-1)(q-1)\vartheta(1)\\
& = & \frac12 q^{m-1}(q^{m-1}-1)(q-1) \cdot
\frac{|L_n^-|}{|P_{n-3}^-|} \cdot \vartheta_0(1)\\
& = & \frac12 q^{m-1}(q^{m-1}-1)(q-1) \cdot
\frac{(q^{m-1}+1) \cdot (q^{m-2}-1)}{q-1} \cdot \vartheta_0(1)
\end{eqnarray*}
and
\begin{eqnarray*}
({}^s\nu)\smash\uparrow_{RQ_K}^P(1) & = & \frac{|P|}{|RQ_K|} \cdot
  \nu(1)\\
& = & \frac{|U| \cdot |L_n|}{|R| \cdot (q-1) \cdot |P_{n-2}|}
  \cdot \frac12 q^{m-2} (q^{m-2}-1)(q-1) \cdot \vartheta_0(1)\\
& = & \frac12 q^{m-1}(q^{m-1}-1)(q-1) \cdot
\frac{(q^{m-1}+1) \cdot (q^{m-2}-1)}{q-1} \cdot \vartheta_0(1).
\end{eqnarray*}
Hence ${}^-\psi_\vartheta(1) = ({}^s\nu)\smash\uparrow_{RQ_K}^P(1)$
and then $({}^s\nu)\smash\uparrow_{RQ_K}^P = {}^-\psi_\vartheta$.
\end{proof}

\section{Restriction of the Steinberg character}
\label{sec:restrictionSt}

We use the setting and notation from the previous sections. In
particular, we fix an odd integer $n=2m+1$ and let 
$G = G_n = \SO_{2m+1}(q)$. The restriction of the Steinberg character
$\St_G$ to the parabolic subgroup $P = P_n$ was already investigated
by Schm\"olzer in \cite{Schmoelzer}. See also
\cite[Section~3]{AnHissSteinberg} for a comparison to the symplectic case.

\begin{proposition}[\cite{Schmoelzer}, Corollary~2.3.8] \label{prop:resSt_SO}
Let $n \ge 5$ and $L' := L_n' \cong \SO_{n-2}(q)$. For each 
$\sigma \in \Irr(L')$ we denote its trivial extension to $L$ also
by~$\sigma$. Then
\begin{eqnarray*}
\sigma\smash\uparrow_L^P & = & {^1}\psi_\sigma +
\sum_{\mu\in\Irr(P_{n-2})} \langle \sigma\smash\downarrow^{L'}_{P_{n-2}},
\mu \rangle_{P_{n-2}} {^0}\psi_\mu \\
&& + \sum_{\vartheta\in\Irr(L_n^+)} \langle \sigma\smash\downarrow^{L}_{L_n^+},
\vartheta \rangle_{L_n^+} {^+}\psi_\vartheta
  + \sum_{\vartheta\in\Irr(L_n^-)} \langle \sigma\smash\downarrow^{L}_{L_n^-},
\vartheta \rangle_{L_n^-} {^-}\psi_\vartheta.
\end{eqnarray*}
\end{proposition}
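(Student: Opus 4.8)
The plan is to compute every multiplicity $\langle\sigma\uparrow_L^P,\chi\rangle_P$ with $\chi\in\Irr(P)$ directly via Frobenius reciprocity, i.e.\ as $\langle\sigma,\chi\downarrow^P_L\rangle_L$, and to check that the answers agree with the coefficients claimed in the statement. Since $\Irr(P)$ is exhausted by the four Types $1,0,+,-$ of Section~\ref{subsec:charsPn}, it suffices to treat those four cases. Type $1$ is immediate: for $\chi={}^1\psi_\tau=\infl_L^P\tau$ with $\tau\in\Irr(L)$ we have $\chi\downarrow^P_L=\tau$ by restriction along the splitting $L\hookrightarrow P$, so $\langle\sigma\uparrow_L^P,{}^1\psi_\tau\rangle_P=\langle\sigma,\tau\rangle_L=\delta_{\sigma,\tau}$, which yields the term ${}^1\psi_\sigma$.

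For the three remaining types I would apply Mackey's theorem. The essential structural input, read off from the matrix descriptions in Section~\ref{subsec:inertiasubgrps}, is that $\tilde P_{n-2}$, $L_n^+$ and $L_n^-$ are all contained in the Levi complement $L$. Combined with $U\le I^\varepsilon$, $U\cap L=1$ and the semidirect decompositions $I^0=U\rtimes\tilde P_{n-2}$, $I^\pm=U\rtimes L_n^\pm$, this gives $I^\varepsilon L=P$, so there is a single $(I^\varepsilon,L)$-double coset, represented by $1$, together with $I^0\cap L=\tilde P_{n-2}$ and $I^\pm\cap L=L_n^\pm$. Feeding this into Mackey's formula, and using that the extensions $\hat\lambda^0$ and $\hat\lambda^\pm$ are by construction trivial on $\tilde P_{n-2}$ and $L_n^\pm$, one obtains ${}^0\psi_\mu\downarrow^P_L=\tilde\mu\uparrow^L_{\tilde P_{n-2}}$ and ${}^\pm\psi_\vartheta\downarrow^P_L=\vartheta\uparrow^L_{L_n^\pm}$. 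A second application of Frobenius reciprocity then turns the multiplicities into inner products over the smaller groups, $\langle\sigma\uparrow_L^P,{}^0\psi_\mu\rangle_P=\langle\sigma\downarrow^L_{\tilde P_{n-2}},\tilde\mu\rangle_{\tilde P_{n-2}}$ and $\langle\sigma\uparrow_L^P,{}^\pm\psi_\vartheta\rangle_P=\langle\sigma\downarrow^L_{L_n^\pm},\vartheta\rangle_{L_n^\pm}$; the last two already match the coefficients in the statement.

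For the Type $0$ coefficient one must still rewrite $\langle\sigma\downarrow^L_{\tilde P_{n-2}},\tilde\mu\rangle_{\tilde P_{n-2}}$ as $\langle\sigma\downarrow^{L'}_{P_{n-2}},\mu\rangle_{P_{n-2}}$. Here I would use that the projection $L=L'\times A\to L'\cong\SO_{n-2}(q)$ restricts to a group isomorphism $\rho\colon\tilde P_{n-2}\to P_{n-2}$ (a comparison of orders shows $\rho$ is injective, hence bijective), that $\sigma\downarrow^L_{\tilde P_{n-2}}=(\sigma\downarrow^{L'}_{P_{n-2}})\circ\rho$ because $\sigma$ is the trivial extension of a character of $L'$, and that $\tilde\mu=\mu\circ\rho$ by the definition of the bijection $\mu\mapsto\tilde\mu$ in Section~\ref{subsec:charsPn}; transporting the inner product through $\rho$ gives the claim. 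Collecting the four contributions over $\Irr(P)$ produces the asserted formula, and as a sanity check both sides have degree $q^{n-2}\sigma(1)$, using the degree formulas for ${}^\varepsilon\psi$ from Section~\ref{subsec:charsPn} together with $\sum_\mu\langle\sigma\downarrow^{L'}_{P_{n-2}},\mu\rangle\mu(1)=\sigma(1)$ and the analogous identities for $L_n^\pm$.

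I expect the main obstacle to be organizational rather than conceptual. The matrix verifications ($\tilde P_{n-2},L_n^\pm\le L$; the single double coset; triviality of $\hat\lambda^\varepsilon$ on the relevant subgroups) are routine consequences of Section~\ref{subsec:inertiasubgrps}, but one has to thread the splitting $L=L'\times A$, the passage $\mu\leftrightarrow\tilde\mu$, and the various inflations consistently so that no spurious scalar coming from $A$ or from the extensions $\hat\lambda^\varepsilon$ creeps in. Notably, nothing about Harish-Chandra induction or the Steinberg character is needed for this statement; those enter only in later applications of Proposition~\ref{prop:resSt_SO}.
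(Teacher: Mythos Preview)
Your argument is correct. The paper itself gives no detailed proof of this proposition; it simply records ``Analogous to \cite[Proposition~3.2]{AnHissSteinberg}'' and refers to the symplectic analogue. Your Frobenius reciprocity plus Mackey computation---reducing to the single double coset $I^\varepsilon L=P$ with $I^\varepsilon\cap L=\tilde P_{n-2}$ or $L_n^\pm$, and then transporting the Type~$0$ inner product through the projection $\rho\colon\tilde P_{n-2}\to P_{n-2}$---is exactly the standard way to establish such a decomposition and is presumably what the cited reference does in the symplectic setting. All the structural verifications you list (that $\tilde P_{n-2},L_n^\pm\le L$, that $\hat\lambda^\varepsilon$ is trivial on these complements, and that $\tilde\mu=\mu\circ\rho$) are immediate from the explicit matrix descriptions in Section~\ref{subsec:inertiasubgrps}, so there is nothing missing.
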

\begin{proof}
Analogous to \cite[Proposition 3.2]{AnHissSteinberg}. 
\end{proof}
We are interested in the decomposition of $\St_G\smash\downarrow^G_P$
into irreducible characters. The following corollary reduces this
problem to calculations with characters of subgroups of $L$. In
Sections~\ref{sec:resuniSO5} and \ref{sec:resuniSO7}, this reduction
will be used to get a complete description of the restriction of the
Steinberg character for $n=5$ and to get a partial description for
$n=7$. 

\begin{corollary}[\cite{Schmoelzer}, Corollary~2.3.9] \label{cor:resSt_SO}
Suppose that $n \ge 5$. Then
\begin{eqnarray*}
\St_G\smash\downarrow^G_P & = & {^1}\psi_{\St_L} +
\sum_{\mu\in\Irr(P_{n-2})} \langle \St_{L'}\smash\downarrow^{L'}_{P_{n-2}},
\mu \rangle_{P_{n-2}} {^0}\psi_\mu \\
&& + \sum_{\vartheta\in\Irr(L_n^+)} \langle \St_L\smash\downarrow^{L}_{L_n^+},
\vartheta \rangle_{L_n^+} {^+}\psi_\vartheta
  + \sum_{\vartheta\in\Irr(L_n^-)} \langle \St_L\smash\downarrow^{L}_{L_n^-},
\vartheta \rangle_{L_n^-} {^-}\psi_\vartheta.
\end{eqnarray*}
\end{corollary}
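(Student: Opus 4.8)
The plan is to obtain Corollary~\ref{cor:resSt_SO} as the special case $\sigma = \St_{L'}$ of Proposition~\ref{prop:resSt_SO}, together with the standard Harish-Chandra description of $\St_G$. First I would recall that the Steinberg character of $G = G_n$ is, up to sign, the alternating sum $\sum_{J' \subseteq S} (-1)^{|J'|} R_{L_{J'}}^G(1)$ over the standard parabolic subgroups, and that its Harish-Chandra restriction behaves well: restricting $\St_G$ to the standard Levi $L = L_n$ one gets (a sign times) $\St_L$, and more precisely $\St_G = (\infl_L^P \St_L)\!\uparrow_P^G$ is \emph{not} literally true, but what \emph{is} true and what we need is that $\St_G\!\downarrow_L^G = \St_L$ in the sense of Harish-Chandra restriction, equivalently $\St_G$ is the unique irreducible constituent of $R_L^G(\St_{L'})$ (with $\St_{L'}$ extended trivially to $L$) lying in the principal series with that cuspidal support. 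Concretely, the cleanest route: it is classical (see e.g. the Harish-Chandra theory of the Steinberg character) that
\[
\St_G\smash\downarrow^G_P = \St_{L'}\smash\uparrow_L^P,
\]
where on the left $\St_G$ is restricted as an ordinary character to $P$ and on the right $\St_{L'}$ is extended trivially to $L$ and Harish-Chandra induced to $P$ via $R_L^P(-) = (\infl_L^P(-))\!\uparrow_P^P$, which is just ordinary induction from $L$ to $P$. This identity is exactly the $n \ge 5$ analogue of \cite[Proposition~3.2]{AnHissSteinberg} and is presumably what Proposition~\ref{prop:resSt_SO} is being applied through.

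Granting that, the proof is then essentially one substitution. Apply Proposition~\ref{prop:resSt_SO} with $\sigma := \St_{L'} \in \Irr(L')$, extended trivially to $L$. The proposition yields
\[
\St_{L'}\smash\uparrow_L^P = {}^1\psi_{\St_{L'}} + \sum_{\mu} \langle \St_{L'}\smash\downarrow^{L'}_{P_{n-2}}, \mu\rangle {}^0\psi_\mu + \sum_{\vartheta^+} \langle \St_{L'}\smash\downarrow^{L}_{L_n^+}, \vartheta\rangle {}^+\psi_\vartheta + \sum_{\vartheta^-} \langle \St_{L'}\smash\downarrow^{L}_{L_n^-}, \vartheta\rangle {}^-\psi_\vartheta,
\]
and combining with $\St_G\!\downarrow^G_P = \St_{L'}\!\uparrow_L^P$ gives the displayed formula once we identify the Type~1 term: the trivial extension of $\St_{L'}$ to $L = L' \times A$ is precisely $\St_L$ (since $A \cong \F_q^\times$ has trivial Steinberg), so ${}^1\psi_{\St_{L'}} = {}^1\psi_{\St_L}$. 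For the $0$-term the inner product $\langle \St_{L'}\!\downarrow_{P_{n-2}}, \mu\rangle_{P_{n-2}}$ already matches the statement verbatim. For the $\pm$-terms, note that $L_n^\pm \le L = L' \times A$ and the restriction of the trivial extension of $\St_{L'}$ to $L$ then restricted to $L_n^\pm$ is the same as $\St_L\!\downarrow^L_{L_n^\pm}$, again because the $A$-factor contributes trivially; so those coefficients coincide with those in the corollary.

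The only real content beyond bookkeeping is the identity $\St_G\!\downarrow^G_P = \St_{L'}\!\uparrow_L^P$, so that is where I would concentrate. Two ways to get it: (i) cite the Harish-Chandra-theoretic fact that $\St_G = \sum_{J'} (-1)^{|S|-|J'|} R_{L_{J'}}^G(1_{L_{J'}})$ and use transitivity of Harish-Chandra induction together with Proposition~\ref{prop:resSt_SO} applied to each $R_{L_{J'}}^{L}(1)$ summand — but this is more work than needed; (ii) more directly, since $\St_G$ occurs with multiplicity one in $R_L^G(\St_{L'})$ and $P$ is a parabolic with Levi $L$, Mackey/standard Steinberg restriction gives $\St_G\!\downarrow_P^G = (\infl_L^P \St_{L'})\!\uparrow_P^P \circ$(identification), i.e. exactly $\St_{L'}\!\uparrow_L^P$; this is verbatim the argument of \cite[Proposition~3.2]{AnHissSteinberg} transported from $\Sp$ to $\SO$, and since the excerpt already invokes that paper for the analogous Proposition~\ref{prop:resSt_SO}, I would simply write ``Analogous to \cite[Proposition~3.2]{AnHissSteinberg}, $\St_G\smash\downarrow^G_P = \St_{L'}\smash\uparrow_L^P$, and now the claim follows from Proposition~\ref{prop:resSt_SO} applied to $\sigma = \St_{L'}$, noting that the trivial extension of $\St_{L'}$ to $L$ equals $\St_L$.'' The main obstacle is thus purely the justification of that one restriction identity; everything else is substitution and the harmless observation that the split torus factor $A$ never affects Steinberg characters or the relevant multiplicities.
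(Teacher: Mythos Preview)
Your proposal is correct and follows essentially the same approach as the paper: the paper's proof is the single line ``This follows from \cite[Proposition~6.3.3]{Carter} and Proposition~\ref{prop:resSt_SO},'' where Carter's result supplies exactly the identity $\St_G\smash\downarrow^G_P = \St_L\smash\uparrow_L^P$ that you identified as the only nontrivial ingredient. The rest is, as you say, substitution of $\sigma = \St_{L'}$ into Proposition~\ref{prop:resSt_SO} together with the observation that the trivial extension of $\St_{L'}$ to $L = L' \times A$ is $\St_L$.
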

\begin{proof}
This follows from~\cite[Proposition~6.3.3]{Carter} and
Proposition~\ref{prop:resSt_SO}. 
\end{proof}

\section{Character values on $U$}
\label{sec:charvalU}

As before, we fix an odd integer $n=2m+1$ and let $G = G_n = \SO_n(q)$. 
We determine the values of the irreducible characters of the 
parabolic subgroup $P=P_n$ on those conjugacy classes of $P$ which are
contained in $U = U_n$. In this section we always assume that $n \ge 5$.

Let $\{e_{m-1}, \dots, e_1,e_0,e_1', \dots, e_{m-1}'\}$ be the
standard basis of $\F_q^{n-2}$ as in Section~\ref{subsec:SO}. 
The conjugation action of $L$ on the normal abelian subgroup $U$ is
given by ${}^{\mbs_n(\mbx, a)} \mbu_n(\mbv) =
\mbu_n(a\mathbf{xv})$. Since $\mbx \in \SO_{n-2}(q)$ the values
$Q_{n-2}(\mbv)$ and $Q_{n-2}(a\mathbf{xv})$ only differ by a non-zero
factor which is a square in $\F_q^\times$. 

We define $\mbv_0, \mbv_1, \mbv_2 \in \F_q^{n-2}$ by $\mbv_0 := e_{m-1}$, 
$\mbv_1 := e_0$, $\mbv_2 := e_{m-1}+\nu''e_{m-1}'$, where 
$\nu'' \in \F_q^\times$ is a non-square if $q$ is odd and 
$\nu'' = 1$ if $q$ is even. We write $\mbz_j := \mbu_n(\mbv_j)$ 
($j=0,1,2$) for the corresponding elements of $U$. We know from 
Brauer's Permutation Lemma~\cite[Corollary~(6.33)]{Isaacs:94} 
and the results in Section~\ref{subsec:inertiasubgrps} that
the number of conjugacy classes of $P$ which are contained in $U$ is
four.

The set $\{1, \mbz_0, \mbz_1, \mbz_2\}$ is a full set of
representatives for the conjugacy classes of~$P$ which are contained
in $U$. For odd $q$, this already follows from $Q_{n-2}(\mbv_0) = 0$,
$Q_{n-2}(\mbv_1) = 1$ and $Q_{n-2}(\mbv_2) = \nu''$. For even $q$, 
the definition of $\SO_{n-2}(q)$ and $Q_{n-2}$ implies that 
$e_0$ is a common eigenvector of all $\mbx \in \SO_{n-2}(q)$ so that
$\mbz_1$ is not conjugate to $\mbz_2$ in $P$. Now, also in this case, 
the values $Q_{n-2}(\mbv_0) = 0$, $Q_{n-2}(\mbv_1) = 1$ and 
$Q_{n-2}(\mbv_2) = \nu''$ imply that $\{1, \mbz_0, \mbz_1, \mbz_2\}$ 
is a full set of representatives for the conjugacy classes of~$P$
which are contained in $U$. By a straightforward calculation, similar
to those in~Section~\ref{subsec:inertiasubgrps}, one can compute
the centralizers $C_P(\mbz_0)$, $C_P(\mbz_1)$ and their orders
explicitly. It follows that 
\begin{eqnarray}
|C_P(\mbz_0)|  & = & q^{m^2}(q-1)(q^{2m-4}-1)(q^{2m-6}-1) \cdots (q^2-1), \nonumber\\
|C_P(\mbz_1)| & =
& \begin{cases}2q^{m^2-m+1}(q^{m-1}-1)(q^{2m-4}-1) \cdots (q^2-1) & \text{for odd} \,\, q ,\\
q^{m^2}(q^{2m-2}-1)(q^{2m-4}-1) \cdots (q^2-1) & \text{for even} \,\, q ,
\end{cases} \label{eq:cenorduniP}\\
|C_P(\mbz_2)| & =
& \begin{cases}2q^{m^2-m+1}(q^{m-1}+1)(q^{2m-4}-1) \cdots (q^2-1) & \text{for odd} \,\, q ,\\
q^{m^2}(q^{2m-4}-1)(q^{2m-6}-1) \cdots (q^2-1) & \text{for even} \,\, q .\nonumber
\end{cases}
\end{eqnarray}
For $j=0,1,2$ let $c_j$ be the $P$-conjugacy class containing $\mbz_j$
and $C_j$ the $G$-conjugacy class containing $\mbz_j$. To express the
dependency on $n$ we also write $c_j^{(n)}$ and $C_j^{(n)}$, 
respectively. 

\begin{remark} \label{rmk:conjPG}
For $n \ge 5$ the $G$-conjugacy classes $C_0$, $C_1$, $C_2$ are pairwise distinct.
\end{remark}
\begin{proof}
It follows from Lemma~\ref{la:intersec_par} (a), that a full set of
$P$-$P$-double coset representatives in $G$ is given by $\{1,s,t\}$,
where $s$ and $t$ are the elements defined in~\eqref{eq:defst}.
Elementary matrix calculations give that
\begin{eqnarray*}
{^s}U\cap U & = & \{\mbu_n(\begin{bmatrix}v_{m-1} & 0 & \dots &
  0 \end{bmatrix}^\tr) \mid v_{m-1} \in \F_q\} \ \text{and}\\  
{^t}U\cap U & = & \{1\}.
\end{eqnarray*} 
Hence ${}^sc_j \cap U = \emptyset$ for $j=1,2$ and 
${}^s c_0 \cap U \subseteq c_0$ and ${}^tc_j \cap U = \emptyset$ for
all $j$, so the claim follows.
\end{proof}

\begin{lemma} \label{la:vals_psi_U}
 Let $n \ge 5$. Let $\mu$ be a character of $P_{n-2}$ and $\vartheta$
 a character of $L_n^+$ or~$L_n^-$. The values the irreducible
 characters of $P$ take on the unipotent elements $\mbz_0$, $\mbz_1$,
 $\mbz_2$ are given in Tables~\ref{tab:valsonunipot}
 and~\ref{tab:valsonunipoteven}.  
 \begin{table}
 \[ \begin{array}{|c|ccc|}
  \hline
  & \mbz_0 & \mbz_1 & \mbz_2\\
  \hline
  {^0}\psi_\mu 
  & -\mu(1) & (q^{m-1}-1)\mu(1) &
  -(q^{m-1}+1)\mu(1)\\
  {^+}\psi_\vartheta 
  & \frac{1}{2}q^{m-1}(q-1)\vartheta(1) & -q^{m-1}\vartheta(1) & 0 \\
  {^-}\psi_\vartheta 
  & -\frac{1}{2}q^{m-1}(q-1)\vartheta(1)& 0& q^{m-1}\vartheta(1)\\
  \hline
 \end{array}
\] 
\caption{Character values on $U$ for odd $q$.}
\label{tab:valsonunipot}
\end{table}
 \begin{table}
 \[ \begin{array}{|c|ccc|}
  \hline
  & \mbz_0 & \mbz_1 & \mbz_2\\
  \hline
  {^0}\psi_\mu 
  & -\mu(1) & (q^{2m-2}-1)\mu(1) & -\mu(1)\\
  {^+}\psi_\vartheta 
  & \frac{1}{2}q^{m-1}(q-1)\vartheta(1) &
  -\frac{1}{2}q^{m-1}(q^{m-1}+1)\vartheta(1) &
  -\frac{1}{2}q^{m-1}\vartheta(1) \\
  {^-}\psi_\vartheta 
  & -\frac{1}{2}q^{m-1}(q-1)\vartheta(1)&
  -\frac{1}{2}q^{m-1}(q^{m-1}-1)\vartheta(1)& \frac{1}{2}q^{m-1}\vartheta(1)\\
  \hline
 \end{array}
\] 
\caption{Character values on $U$ for even $q$.}
\label{tab:valsonunipoteven}
\end{table}
\end{lemma}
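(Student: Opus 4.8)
The strategy is to compute each of the three character values directly from the explicit construction of ${}^0\psi_\mu$, ${}^+\psi_\vartheta$ and ${}^-\psi_\vartheta$ given in Section~\ref{subsec:charsPn}, using the standard induced-character formula together with the coset-counting data already built into the definitions. Recall that each of these characters is induced from a linear-times-inflated character on an inertia subgroup $I^\varepsilon = U L_n^\varepsilon$ (or $U\tilde P_{n-2}$ for $\varepsilon = 0$), so for a unipotent element $\mbz_j = \mbu_n(\mbv_j) \in U$ we have
\[
{}^\varepsilon\psi_\bullet(\mbz_j) = \frac{1}{|I^\varepsilon|}\sum_{g\in P,\ {}^g\mbz_j \in I^\varepsilon} (\hat\lambda^\varepsilon\cdot\mathrm{infl}\,\bullet)({}^g\mbz_j).
\]
Since $U$ is abelian and normal in $P$, every conjugate ${}^g\mbz_j$ lies in $U \subseteq I^\varepsilon$, and the inflated factor is trivial on $U$; hence the value reduces to $\frac{[P:I^\varepsilon\,]}{|\text{orbit of }\mbz_j|}\cdot(\text{value of }\hat\lambda^\varepsilon\text{ summed over the }L_n^\varepsilon\text{-orbit of }\mbv_j\text{ inside that }P\text{-orbit})$, which in turn is governed by the Gauss-sum-type quantities $\sum_{\mbx}\xi((\mbx\mbv_j)_{\text{relevant coord}})$. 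Concretely, I would first record, for each $j\in\{0,1,2\}$, the size of the $P$-class $c_j$ (equivalently $|C_P(\mbz_j)|$ from~\eqref{eq:cenorduniP}) and then evaluate $\hat\lambda^0, \hat\lambda^+, \hat\lambda^-$ on a set of orbit representatives of $U$-conjugates.

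The cleanest way to organise the bookkeeping is to use Clifford theory in reverse: by Frobenius reciprocity, ${}^\varepsilon\psi_\bullet(\mbz_j)$ is the average of $\hat\lambda^\varepsilon$ over the $P$-conjugates of $\mbz_j$, weighted by how the $P$-orbit decomposes into $I^\varepsilon$-orbits. Since $\mbz_j$ lies in $U$ and $\hat\lambda^\varepsilon|_U = \lambda^\varepsilon$, we get
\[
{}^\varepsilon\psi_\bullet(\mbz_j) = \bullet(1)\cdot\frac{[L_n:L_n^\varepsilon\,]}{|c_j|}\sum_{\mbx\in T_j}\lambda^\varepsilon(\mbu_n(\mbx\mbv_j)),
\]
where $T_j$ runs over a transversal producing the $P$-orbit of $\mbv_j$ from the $L_n^\varepsilon$-action — but in practice it is easier to note that $\sum_{g\in P}\lambda^\varepsilon({}^g\mbz_j)$ is a character sum over $\F_q$ of the shape $\sum_{v\in S}\xi(\ell(v))$ for a linear functional $\ell$ and an orbit $S\subseteq\F_q^{n-2}$, and such sums are $|S|$ if $\ell\equiv 0$ on $S$ and otherwise split according to how many $v\in S$ have each value of $\ell(v)$. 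For $\mbz_0 = \mbu_n(e_{m-1})$ the vector $e_{m-1}$ is isotropic; for $\mbz_1 = \mbu_n(e_0)$ the vector $e_0$ has $Q_{n-2}(e_0)=1$ (a square); for $\mbz_2$ the value is the nonsquare $\nu''$ (odd $q$) or again $1$ but in a class separated by the $e_0$-eigenvector phenomenon (even $q$). Feeding these orbit structures — which are exactly the $\mathrm{SO}^\pm$-orbit sizes on vectors of given norm, i.e.\ the quantities appearing in the group orders listed in Section~\ref{subsec:SO} — into the character sums yields the table entries. The factor-of-$2$ and $\frac12$ discrepancies between Tables~\ref{tab:valsonunipot} and~\ref{tab:valsonunipoteven} come precisely from $d=\gcd(2,q-1)$ and from $\mathrm{GO}_{2m+1}=\mathrm{SO}_{2m+1}$ in even characteristic, so the two parity cases must be run separately.

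For Type $1$ there is nothing to prove: ${}^1\psi_\sigma$ has $U$ in its kernel, so it is constant on $U$ and does not appear in the tables. The three nontrivial rows are then handled one at a time. The main obstacle is the careful determination, in each characteristic, of how the single $P$-conjugacy class $c_j$ breaks up into $L_n^\varepsilon$-orbits on $\F_q^{n-2}$ — equivalently, computing $|C_{L_n^\varepsilon}(\mbv_j)|$ and the relevant Gauss sum $\sum_{\mbx\in L_n^\varepsilon}\xi((\mbx\mbv_j)_0)$ (resp.\ $\xi(v_{m-1}')$, resp.\ $\xi((\mathbf b_{n-2}\mbx\mbv_j)_0)$). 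This is where the non-square/square dichotomy of $Q_{n-2}(\mbv_j)$ and the even-$q$ eigenvector argument enter, and it is the only genuinely delicate point; everything else is the routine induced-character computation and elementary manipulation of the orders recorded in Section~\ref{subsec:SO} and~\eqref{eq:cenorduniP}. Once the orbit counts are in hand, comparing them against the degrees ${}^0\psi_\mu(1) = (q^{2m-2}-1)\mu(1)$ and ${}^\pm\psi_\vartheta(1) = \tfrac12 q^{m-1}(q^{m-1}\pm1)(q-1)\vartheta(1)$ forces the stated values and also serves as an internal consistency check via $\sum_j |c_j|\,{}^\varepsilon\psi_\bullet(\mbz_j) + {}^\varepsilon\psi_\bullet(1) \cdot 1 = 0$ on the relevant part (orthogonality of the restriction to $U$).
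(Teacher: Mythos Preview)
Your approach is correct and shares its starting point with the paper: both reduce ${}^\varepsilon\psi_\bullet(\mbz_j)$ via the induced-character formula to the sum $\frac{|C_P(\mbz_j)|\cdot\bullet(1)}{|I^\varepsilon|}\sum_{\mbz\in c_j}\lambda^\varepsilon(\mbz)$, and both then evaluate this as a character sum of $\xi$ over the orbit $c_j$ governed by a single coordinate.

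The paper's execution differs from yours in two places that make it noticeably shorter. First, instead of decomposing $c_j$ into $L_n^\varepsilon$-orbits and computing stabilizers $|C_{L_n^\varepsilon}(\mbv_j)|$ as you propose, the paper partitions $c_j$ into regular orbits under the central torus $A\cong\F_q^\times$; each such orbit contributes $q-1$ if the relevant coordinate vanishes and $-1$ otherwise, so the whole sum collapses to $\frac{1}{q-1}(q\cdot{}^\varepsilon N_j-|c_j|)$ where ${}^\varepsilon N_j$ is simply the number of vectors in $c_j$ with that coordinate zero. These counts ${}^\varepsilon N_j$ are then read off recursively from the sizes $|c_j^{(n-2)}|$ (for $\varepsilon=0$) or from a single quadratic-form counting lemma (for $\varepsilon=+$). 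Second, the paper only computes the $\mbz_0,\mbz_1$ columns for $\varepsilon\in\{0,+\}$ directly; the $\mbz_2$ column is obtained from row orthogonality in $\Irr(U)$, and the entire ${}^-\psi_\vartheta$ row is obtained from column orthogonality (the coefficients of $\mu(1)$ and $\vartheta(1)$ in each column must sum to $-1$). You mention orthogonality only as a consistency check, but the paper uses it as the actual mechanism to avoid the delicate $\lambda^-$ computation involving $\mathbf b_{n-2}$ altogether.

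Both routes work; yours would require handling $\lambda^-$ explicitly and tracking $L_n^\varepsilon$-orbit structures, whereas the paper's $A$-action trick plus orthogonality bypasses most of that bookkeeping.
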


\begin{proof}
We use the notation from Sections~\ref{subsec:inertiasubgrps} and
\ref{subsec:charsPn}, except for the fact that in this proof we denote
irreducible characters of $P_{n-2}$ also by $\vartheta$ (and not by
$\mu$) to treat all characters in a uniform way.
Let $\varepsilon \in \{ 0,+,-\}$ and
$j \in \{0,1,2\}$ and let $\vartheta \in \Irr(P_{n-2})$ if
$\varepsilon = 0$ and $\vartheta \in \Irr(L_n^\pm)$ if 
$\varepsilon = \pm$. Since $U \unlhd P$ we get from the definition of
induced characters and the construction of ${^\varepsilon}\psi_\vartheta$ 
that 
\begin{equation} \label{eq:orbitsums}
{^\varepsilon}\psi_\vartheta(\mbz_j) =
\frac{1}{|I^\varepsilon|} \sum_{x \in P} \vartheta(1)
\lambda^\varepsilon(x \mbz_j x^{-1}) =
\frac{|C_P(\mbz_j)| \cdot \vartheta(1)}{|I^\varepsilon|} \sum_{\mbz \in c_j} 
\lambda^\varepsilon(\mbz).
\end{equation}
Since $|C_P(\mbz_j)|$ and $|I^\varepsilon|$ are known, we
only need to determine the sum of character values on the right hand
side of \eqref{eq:orbitsums}. 

The subgroup $A \le L$ defined in Section~\ref{subsec:Pn} acts
regularly on each of the conjugacy classes $c_j$ by 
${}^{\mathbf{s}_n(I_{n-2}, a)}\mbu_n(\mbw) 
= \mathbf{s}_n(I_{n-2}, a) \mbu_n(\mbw) \mathbf{s}_n(I_{n-2}, a)^{-1}
= \mbu_n(a\mbw)$. For any vector $\mbw = \begin{bmatrix}w_{m-1},
\dots, w_1,w_0,w_1', \dots, w_{m-1}'\end{bmatrix}^\tr \in \F_q^{n-2}$ we
have 
\begin{eqnarray*}
\sum_{a \in \F_q^\times}\lambda^0(\mbu_n(a\mbw)) & = & 
\sum_{a \in \F_q^\times}\xi(aw_{m-1}') = \begin{cases} q-1 &
  \text{if} \,\, w_{m-1}'=0,\\ -1 & \text{if} \,\,
  w_{m-1}' \neq 0, \end{cases} \quad \text{and}\\
\sum_{a \in \F_q^\times}\lambda^+(\mbu_n(a\mbw)) & = & 
\sum_{a \in \F_q^\times}\xi(aw_0) \hspace{0.5cm} = \begin{cases} q-1 &
  \text{if} \,\, w_0=0,\\ -1 & \text{if} \,\,
  w_0 \neq 0. \end{cases}
\end{eqnarray*}
We set ${}^0N_j := |\{\mbu_n(\mbw) \in c_j \mid w_{m-1}'=0\}|$ and
${}^+N_j := |\{\mbu_n(\mbw) \in c_j \mid w_0=0\}|$. By partitioning
$c_j$ into regular $A$-suborbits, we have
\begin{equation} \label{eq:orbitsum0}
\sum_{\mbz \in c_j} 
\lambda^\varepsilon(\mbz) = \frac{q-1}{q-1} \cdot {}^\varepsilon N_j +
\frac{|c_j|-{}^\varepsilon N_j}{q-1} \cdot (-1) = 
\frac{1}{q-1}(q \cdot {}^\varepsilon N_j - |c_j|).
\end{equation}
Suppose that $\varepsilon = 0$. For $j=0$ we have 
\begin{eqnarray*}
{}^0 N_0 & = & |\{\mbw \in \F_q^{n-2} \setminus \{0\} \mid Q_{n-2}(\mbw) =
0 \,\, \text{and} \,\, w_{m-1}'= 0\}|\\
& = & q \cdot |c_0^{(n-2)}| + q-1 = q \cdot (q^{2m-4}-1) + q-1 = q^{2m-3}-1,
\end{eqnarray*}
so that \eqref{eq:orbitsum0} evaluates to $-1$. Next, let $j=1$. If
$q$ is odd, we have
\begin{eqnarray*}
{}^0 N_1 & = & |\{\mbw \in \F_q^{n-2} \mid Q_{n-2}(\mbw) 
\,\, \text{is a square in} \,\, \F_q^\times \,\, \text{and} \,\, w_{m-1}'= 0\}|\\
& = & q \cdot |c_1^{(n-2)}| = \frac{q-1}{2} q^{m-1} (q^{m-2}+1),
\end{eqnarray*}
giving $\frac{1}{2}q^{m-1}(q-1)$ as the right hand side of
\eqref{eq:orbitsum0}. If $q$ is even then ${}^0 N_1 = q-1$, 
and the right hand side of \eqref{eq:orbitsum0} evaluates to $q-1$.
This gives the values of ${}^0\psi_\vartheta$ on~$\mbz_0$ and
$\mbz_1$. The values of ${}^0\psi_\vartheta$ on $\mbz_2$ follow from
row orthogonality relations for the characters of $U$. 

Suppose that $\varepsilon = +$. By \cite[Lemma~6.10]{JacobsonBAI} for
odd $q$ and even $q$ we have ${}^{+}N_0 = q^{m-2}(q^{m-1}+q-1) - 1$
giving $q^{m-1}-1$ for \eqref{eq:orbitsum0}. Similarly, we obtain
${}^+N_1 = \frac{q-1}{2}q^{m-2}(q^{m-1}-1)$ for odd $q$, and hence
the right hand side of \eqref{eq:orbitsum0} evaluates to $-q^{m-1}$ in 
this case. However, for even $q$ we obtain ${}^+N_1=0$, as the only
vector~$\mbw$ which is a scalar multiple of $e_0$ with $w_0=0$ is the
zero vector. Therefore \eqref{eq:orbitsum0} evaluates to $-1$. The
values of ${}^+\psi_\vartheta$ on $\mbz_2$ can be determined again
with the help of row orthogonality relations.

Suppose that $\varepsilon = -$. By the column orthogonality
relations for $\Irr(U)$, the sum of all coefficients of $\mu(1)$ 
and $\vartheta(1)$ in each column of Table~\ref{tab:valsonunipot}
and Table~\ref{tab:valsonunipoteven} has to be~$-1$. This gives the
values of ${}^-\psi_\vartheta$ and completes the proof. 
\end{proof}

\begin{remark} \label{rmk:valdegs}
Let $n\ge5$. Each character $\chi$ of $P=P_n$ can be written uniquely
as $\chi = {}^1\chi + {}^0\chi + {}^+\chi + {}^-\chi$ where
${}^\varepsilon\chi$ is the sum of the constituents of $\chi$ of Type
$\varepsilon$ and $\varepsilon \in \{0, 1, +, -\}$. So each
${}^\varepsilon\chi$ is of the form ${}^\varepsilon\psi_{\vartheta^\varepsilon}$ for
some not necessarily irreducible character $\vartheta^\varepsilon$. We call
${}^\varepsilon\chi$ the Type $\varepsilon$ component of $\chi$. 
Lemma~\ref{la:vals_psi_U} allows us to reconstruct the degrees  
${}^\varepsilon\chi(1)$ or equivalently the degrees $\vartheta^\varepsilon(1)$ 
from the values of~$\chi$ on the conjugacy classes of $P$ which are
contained in $U$ as follows: For odd $q$ we set 
\[
M := \begin{bmatrix}1 & q^{2m-2}-1 &
  \frac{q^{m-1}}{2}(q^{m-1}+1)(q-1) & 
\frac{q^{m-1}}{2}(q^{m-1}-1)(q-1)\\1 & -1 & \frac{q^{m-1}}{2}(q-1) & 
-\frac{q^{m-1}}{2}(q-1)\\1 & q^{m-1}-1 & -q^{m-1} & 0\\1 & -(q^{m-1}+1) & 0 & 
q^{m-1}\end{bmatrix},
\]
and for even $q$ we define
\[
M := \begin{bmatrix}
1 & q^{2m-2}-1 & \frac{q^{m-1}}{2}(q^{m-1}+1)(q-1) &
\frac{q^{m-1}}{2}(q^{m-1}-1)(q-1)\\
1 & -1 & \frac{q^{m-1}}{2}(q-1) & -\frac{q^{m-1}}{2}(q-1) \\
1 & q^{2m-2}-1 & -\frac{q^{m-1}}{2}(q^{m-1}+1) & -\frac{q^{m-1}}{2}(q^{m-1}-1) \\
1 & -1 & -\frac{q^{m-1}}{2} & \frac{q^{m-1}}{2}
\end{bmatrix}. 
\]
Because $\det(M) = q^{4m-2} \neq 0$ for odd $q$ and 
$\det(M) = \frac{1}{2}q^{5m-3} \neq 0$ for even $q$ we get from
Lemma~\ref{la:vals_psi_U} for all odd $q$ and all even $q$: 
\begin{equation} \label{eq:vals_mat}
\begin{bmatrix}\vartheta^1(1)\\\vartheta^0(1)\\\vartheta^+(1)\\\vartheta^-(1)\\\end{bmatrix}
  = M^{-1} \begin{bmatrix}\chi(1)\\\chi(\mbz_0)\\\chi(\mbz_1)\\\chi(\mbz_2)\end{bmatrix}.
\end{equation}
\end{remark}

\section{Restrictions of unipotent characters of $\SO_5(q)$}
\label{sec:resuniSO5}

When we speak of a unipotent character we always mean an irreducible
character. Recall (see \cite[Section~13.8]{Carter}) that for $n=2m+1$
the unipotent characters of the group $G = \SO_n(q)$ are parameterized
by symbols 
\[ 
\Lambda = \begin{pmatrix} \lambda_1 \: \lambda_2 \: \lambda_3 \: \cdots \: \lambda_r\\
\mu_1 \: \mu_2 \: \cdots \: \mu_s \end{pmatrix},
\]
for which $0\le \lambda_1 < \ldots < \lambda_r$ and 
$0 \le \mu_1 < \ldots < \mu_s$ are strictly increasing sequences of
non-negative integers and the second sequence may be
empty. Furthermore, the difference $r-s$ is odd and 
$\sum \lambda_i + \sum \mu_j - \lfloor (\frac{r+s-1}{2})^2 \rfloor = m$. 
We call $r-s$ the defect of $\Lambda$ and $m$ its rank. 
There is a bijection (see \cite[Section~11.4]{Carter}) between the set
of symbols of rank $m$ and defect $d=2d'+1$ and the set of
bipartitions $(\alpha,\beta)$ such that $|\alpha|+|\beta| = m - (d'^2+d')$.
Via this bijection we identify each symbol $\Lambda$ with the triple
$[\alpha,\beta,d]$ consisting of its corresponding bipartition and its
defect. We write $\chi_\Lambda$ for the unipotent character of $G$
corresponding to the symbol $\Lambda$. 

In this section we determine the decomposition into irreducible
characters of the restrictions of the unipotent characters of
$G_5 = \SO_5(q)$ to the maximal parabolic subgroup $P_5$. In the whole 
section we assume that $m=2$ and $n=2m+1=5$. We set 
$G:=\SO_5(q)$, $P:=P_5$, $L:=L_5$, and $L^\pm := L_5^\pm$ for
brevity. The degrees of the unipotent characters of $\SO_5(q)$ and
their labels are given in Table~\ref{tab:labelsSO5}. 

\begin{table}[h]
 \centering
 \begin{tabular}{|c|c|c||c|c|c|}
  \hline
  Bipartition & Symbol & Degree &  Bipartition & Symbol & Degree \\
  \hline
  ${[2,-,1]}$ & $\begin{pmatrix}2\\-\end{pmatrix}$ & $1$ &
    ${[1,1,1]}$ & $\begin{pmatrix}0 \: 2\\1\end{pmatrix}$ &
     $\frac{1}{2}q(q+1)^2$\\
    ${[-,-,3]}$ & $\begin{pmatrix} 0 \: 1\: 2\\ - \end{pmatrix}$ &
     $\frac{1}{2}q(q-1)^2$ &
   ${[-,2,1]}$ & $\begin{pmatrix} 0\: 1\\2 \end{pmatrix}$ &
    $\frac{1}{2}q(q^2+1)$\\
   ${[1^2,-,1]}$ & $\begin{pmatrix}1\: 2\\ 0\end{pmatrix}$ & 
    $\frac{1}{2}q(q^2+1)$ &
  ${[-,1^2,1]}$ & $\begin{pmatrix} 0\: 1\: 2\\ 1\:2 \end{pmatrix}$ & $q^4$\\
  \hline
 \end{tabular}
 \caption{Labels and degrees of the unipotent characters of $\SO_5(q)$.}
 \label{tab:labelsSO5}
\end{table}

As in Section~\ref{sec:charvalU} we denote the $P$-conjugacy class
of the unipotent element~$\mbz_j$ by~$c_j$ for $j=0,1,2$. For odd
prime powers $q$, the conjugacy classes and the values of the unipotent characters
of $G=\SO_5(q)$ were computed by Frank L\"ubeck (private communication). 
The group $G$ has exactly five unipotent conjugacy classes and the
order of the centralizer in $G$ of representatives for these classes
are: $|G|$, $q^4(q^2-1)$, $2q^3(q-1)$, $2q^3(q+1)$ and~$q^2$,
respectively. Remark~\ref{rmk:conjPG} and the centralizer
orders~\eqref{eq:cenorduniP} determine the fusion of the classes
$c_0$, $c_1$, $c_2$ into the unipotent classes of $G$ uniquely so that
we can read off the values of the unipotent characters of $G$ on the
elements $\mbz_0$, $\mbz_1$ and $\mbz_2$ in Table~\ref{tab:uniGzj}
for odd $q$ from L\"ubeck's data. In the table zeros are replaced by dots.

To determine the values of the unipotent characters of $G$ on $\mbz_j$
for even $q$ we identify $G=\SO_5(q)$ with $\Sp_4(q)$ via the isomorphism
described in Remark~\ref{rmk:even_iso_Sp}. The conjugacy classes and
the irreducible characters of $\Sp_4(q)$ for even $q$ were determined by
Enomoto~\cite{Enomoto}. Using the notation for the unipotent conjugacy
classes and the representatives in~\cite{Enomoto} we see that 
$\mbz_0 \in A_{31}$,  $\mbz_1 \in A_2$ and $\mbz_2 \in A_{32}$.
Now the character table of $\Sp_4(q)$ in~\cite{Enomoto} or
CHEVIE~\cite{CHEVIE} gives the values in Table~\ref{tab:uniGzj} for
even $q$. 

\begin{table}[h]
 \centering
 \[ 
 \begin{array}{|c|ccc|ccc|}
 \hline 
 & \multicolumn{3}{|c|}{q \,\, \text{odd}} & \multicolumn{3}{|c|}{q
     \,\, \text{even}}\\
 \Lambda & \mbz_0 & \mbz_1 & \mbz_2 & \mbz_0 & \mbz_1 & \mbz_2 \\\hline
 {[-,-,3]} & -\frac{1}{2}q(q-1) & . & q & -\frac12 q (q-1) & -\frac12
 q (q-1) & \frac12 q\\
 {[1^2,-,1]} & -\frac{1}{2}q(q-1) & q & . & -\frac12 q (q-1) & \frac12 q
 (q+1) & \frac12 q\\
 {[1,1,1]} & \frac{1}{2}q(q+1) & q & . & \frac12 q (q+1) & \frac12 q
 (q+1) & \frac12 q\\
 {[-,2,1]} & \frac{1}{2}q(q+1) & . & q & \frac12 q(q+1) & -\frac12 q
 (q-1) & \frac12 q\\
 \hline
 \end{array}
\]
 \caption{Values $\chi_\Lambda(\mbz_j)$ of unipotent characters of
   $\SO_5(q)$.}
 \label{tab:uniGzj}
\end{table}

Additionally, we have the values $\chi_{[2,-,1]}(\mbz_j) = 1$ for all
$j$ for the trivial character and $\chi_{[-,1^2,1]}(\mbz_j) = 0$ for
all $j$ for the Steinberg character. 
To describe the restriction of the irreducible characters of 
$G = \SO_5(q)$ to the parabolic subgroup~$P$ we collect some
information on the conjugacy classes and irreducible characters of
some subgroups of $G$.

\begin{remark} \label{rmk:charsubSO5}
(a)~The parabolic subgroup $P_3 \cong q:(q-1)$ of 
$L'  := L_5' \cong \SO_3(q)$ is a Borel subgroup and has $q-1$ linear
characters and a unique non-linear irreducible character, which we
denote by $\mu$. Its degree is $\mu(1) = q-1$. 

\smallskip

\noindent (b)~By \cite[Theorem 11.4]{Taylor} the group 
$L^\pm \cong \GO_2^\pm(q)$ is dihedral of order $2(q \mp 1)$. More
specifically: $L^\pm$ has a cyclic normal subgroup $K^\pm$ of index
$2$, and an outer involution acts on $K^\pm$ by inverting each
element. For odd $q$ we have $K^\pm = L^\pm \cap L'$. In other words:
$K^\pm = \SO_2^\pm(q)$ when we identify $L^\pm$ with $\GO_2^\pm(q)$ and
$q$ is odd.

For odd $q$, the group~$L^\pm$ has four linear characters
$1_{L^\pm}$, $\nu_1^\pm$, $\nu_2^\pm$, $\nu_3^\pm$ and 
$\frac{q\mp 1}{2} - 1$ irreducible characters $\chi_j^\pm$ of
degree~$2$ and we choose the numbering so that 
$K^\pm \le \ker(\nu_1^\pm)$~and 
$K^\pm \not\le \ker(\nu_2^\pm), \ker(\nu_3^\pm)$. We will see in
Theorem~\ref{thm:unipotresSO5} that exactly one of the characters
$\nu_2^\pm$, $\nu_3^\pm$ is a constituent of
$\St_L\smash\downarrow^L_{L^\pm}$ and we choose the notation so 
that $\nu_3^\pm$ is this constituent. The group $L^\pm$ has exactly
two conjugacy classes which are not contained in $K^\pm$. Both 
of them consist of involutions and have size $\frac{q \mp 1}{2}$. 

For even $q$, the group~$L^\pm$ has two linear characters
$1_{L^\pm}$, $\nu_1^\pm$ and $\frac{q-1\mp 1}{2}$ irreducible
characters $\chi_j^\pm$ of degree~$2$ and we have 
$K^\pm \le \ker(\nu_1^\pm)$. The group $L^\pm$ has only one
conjugacy class which is not contained in $K^\pm$. This conjugacy
class is the unique conjugacy class of involutions and has size $q \mp 1$. 

For odd $q$ and for even $q$ we write $\Xi^\pm$ for the sum of all
irreducible characters~$\chi_j^\pm$ of of $L^\pm$ degree $2$. 
\end{remark}

\begin{theorem}\label{thm:unipotresSO5}
For odd $q$ and even $q$ the unipotent characters of $\SO_5(q)$
restricted to $P_5$ decompose as given in Table~\ref{tab:unipotresSO5}.
\end{theorem}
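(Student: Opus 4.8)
The plan is to organise the six unipotent characters of $G=\SO_5(q)$ according to their Harish-Chandra series: by the theory of Harish-Chandra series in type $B_2$ (see \cite[Section~13.8]{Carter}), five of them lie in the principal series — namely $\chi_{[2,-,1]}$ (trivial), $\chi_{[-,1^2,1]}$ (Steinberg), $\chi_{[1,1,1]}$, $\chi_{[1^2,-,1]}$, $\chi_{[-,2,1]}$ — while $\chi_{[-,-,3]}$, of degree $\tfrac12 q(q-1)^2$, is cuspidal. First I would dispose of $\chi_{[2,-,1]}\smash\downarrow^G_P = {}^1\psi_{1_L}$, and then compute $\St_G\smash\downarrow^G_P$ from Corollary~\ref{cor:resSt_SO}. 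This requires $\St_{L'}\smash\downarrow^{L'}_{P_3}=1_{P_3}+\mu$ (the trivial character plus the unique non-linear character of the Borel subgroup $P_3$ of $\SO_3(q)\cong\mathrm{PGL}_2(q)$), and the decomposition of $\St_L\smash\downarrow^L_{L^\pm}$, i.e.\ the restriction of the degree-$q$ Steinberg character of $\SO_3(q)$ to the dihedral groups $\GO_2^\pm(q)$, read off from the character tables of $\mathrm{PGL}_2(q)$ and of dihedral groups. In particular this singles out the constituent $\nu_3^\pm$ of $\St_L\smash\downarrow^L_{L^\pm}$ as announced in Remark~\ref{rmk:charsubSO5}, and the cases of odd and even $q$ must be separated here because the groups $\GO_2^\pm(q)$, hence the characters available, differ.

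Next, for the three remaining principal-series characters I would use the restriction of Harish-Chandra induced characters. Applying the Mackey formula \eqref{eq:LGP} together with Theorem~\ref{thm:indnu} — noting that ${}^t\sigma=\sigma$ for $\sigma=\sigma'\boxtimes 1_A$ with $\sigma'\in\Irr(\SO_3(q))$ (conjugation by $t$ induces an inner automorphism of the $\SO_3$-factor and inverts $A$, on which $\sigma$ is trivial), so that part~(a) reduces the $t$-term to $\sigma\smash\uparrow^P_L$ which is given by Proposition~\ref{prop:resSt_SO}, and that parts~(b) and~(d) handle the Type~$1$ constituent $1_{P_3}$ and the Type~$+$ constituent $\mu$ of $\sigma'\smash\downarrow_{P_3}$ — I can write $R_L^G(\sigma)\smash\downarrow^G_P$ explicitly for $\sigma=1_{L'}\boxtimes 1_A$ and $\sigma=\St_{L'}\boxtimes 1_A$ as a sum of the characters ${}^\varepsilon\psi_\vartheta$. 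Comparing with the decompositions $R_L^G(1_{L'}\boxtimes 1_A)=\chi_{[2,-,1]}+\chi_{[1,1,1]}+\chi'$ and $R_L^G(\St_{L'}\boxtimes 1_A)=\chi_{[-,1^2,1]}+\chi_{[1,1,1]}+\chi''$, where $\{\chi',\chi''\}=\{\chi_{[1^2,-,1]},\chi_{[-,2,1]}\}$ (this follows from the generic degrees of the unipotent characters of $\SO_5(q)$ and the structure of Harish-Chandra series), and subtracting the already-determined restrictions of $\chi_{[2,-,1]}$ and $\chi_{[-,1^2,1]}$, I obtain $\chi_{[1,1,1]}\smash\downarrow^G_P+\chi'\smash\downarrow^G_P$ and $\chi_{[1,1,1]}\smash\downarrow^G_P+\chi''\smash\downarrow^G_P$ explicitly.

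To split these two sums into the individual restrictions I would bring in the degree information. For each of $\chi_{[1,1,1]}$, $\chi_{[1^2,-,1]}$, $\chi_{[-,2,1]}$ the values on $1,\mbz_0,\mbz_1,\mbz_2$ are given by Table~\ref{tab:uniGzj}, so Remark~\ref{rmk:valdegs} (multiplication by $M^{-1}$) yields the degrees $\vartheta^1(1),\vartheta^0(1),\vartheta^+(1),\vartheta^-(1)$ of the Type~$\varepsilon$ components of each restriction. Since within each type the sum at hand is a known non-negative combination of characters ${}^\varepsilon\psi_\vartheta$ whose degrees are controlled by the explicitly described character degrees of $P_3$, $L^+$, $L^-$ (Remark~\ref{rmk:charsubSO5}), matching degrees type-by-type isolates $\chi_{[1,1,1]}\smash\downarrow^G_P$ and simultaneously identifies which of $\chi_{[1^2,-,1]}$, $\chi_{[-,2,1]}$ is $\chi'$ and which is $\chi''$.

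Finally, for the cuspidal character $\chi_{[-,-,3]}$ the Type~$1$ component vanishes: since ${}^1(\chi\smash\downarrow^G_P)={}^1\psi_{{}^{*}R_L^G(\chi)}$ and ${}^{*}R_L^G(\chi_{[-,-,3]})=0$ by cuspidality. The degrees of the Types $0,+,-$ components are then obtained from Table~\ref{tab:uniGzj} and Remark~\ref{rmk:valdegs}, and since these degrees must be realised by genuine characters, respectively Clifford constituents, of $P_3$, $L^+$, $L^-$, Remark~\ref{rmk:charsubSO5} forces the constituents; a check that they add up to $\tfrac12 q(q-1)^2$ closes this case and yields Table~\ref{tab:unipotresSO5}. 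The main obstacle is the separation step: this is where the bookkeeping is heaviest, where the arithmetic of the degrees of the dihedral groups $\GO_2^\pm(q)$ genuinely enters, and where the odd- and even-$q$ cases diverge, producing the two halves of the table.
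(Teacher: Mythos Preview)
Your overall strategy matches the paper's: compute the Steinberg restriction via Corollary~\ref{cor:resSt_SO}, handle the remaining principal-series characters by restricting $R_L^G(1_L)$ and $R_L^G(\St_L)$ via~\eqref{eq:LGP} and Theorem~\ref{thm:indnu}, and separate the summands using the component degrees from Remark~\ref{rmk:valdegs}. All of that is sound and essentially what the paper does.

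The gap is in your treatment of the cuspidal character $\chi_{[-,-,3]}$. Remark~\ref{rmk:valdegs} tells you only that the restriction has the form ${}^-\psi_\vartheta$ with $\vartheta \in \Irr(L^-)$ linear; but $L^- \cong \GO_2^-(q)$ is dihedral of order $2(q+1)$ and has four linear characters when $q$ is odd (two when $q$ is even). Nothing in Remark~\ref{rmk:charsubSO5} or in your degree check --- which yields $\tfrac12 q(q-1)^2$ for \emph{any} linear $\vartheta$ --- distinguishes $\nu_1^-$ from $1_{L^-}$, $\nu_2^-$, or $\nu_3^-$. The paper closes this by an extra computation: from ${}^-\psi_\vartheta\smash\downarrow^P_{L^-} = \vartheta\smash\uparrow^L_{L^-}\smash\downarrow^L_{L^-}$ one evaluates $\chi_{[-,-,3]}$ on representatives of the outer involution classes of $L^-$ (two classes for odd $q$, one for even $q$), and the actual character values of $\chi_{[-,-,3]}$ on these elements force $\vartheta(\tau) = -1$, hence $\vartheta = \nu_1^-$. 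You need this step, or an equivalent argument, to pin down the Type~$-$ constituent.
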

\begin{table}[h]
 \[
 \begin{array}{|c||c|c|c|c|}
  \hline
   & \multicolumn{4}{|c|}{\chi_\Lambda\downarrow^G_P}\\
  \Lambda & \text{Type} \,\, 1 & \text{Type} \,\, 0 & \text{Type} \,\, + & \text{Type} \,\, - \\
  \hline
 {[2,-,1]}   & {[1,-,1]} &&& \\
 {[-,-,3]}   & & & & \nu_1^- \\
 {[1^2,-,1]} & {[1,-,1]} & 1_{P_3} & & 1_{L^-} \\
 {[1,1,1]}   & {[1,-,1]}+{[-,1,1]} & 1_{P_3} & 1_{L^+} & \\
 {[-,2,1]}   & {[-,1,1]} & & \nu_1^+ & \\
 {[-,1^2,1]} & {[-,1,1]} & 1_{P_3}+\mu & 1_{L^+}+\nu_1^+(+\nu_3^+)+\Xi^+ & (\nu_3^-+)\Xi^- \\\hline
 \end{array}
 \]
 \caption{Unipotent characters of $\SO_5(q)$ restricted to $P_5$.}
\label{tab:unipotresSO5}
\end{table}

\begin{remark} \label{rmk:rmkresSO5}
The first column of Table~\ref{tab:unipotresSO5} contains the symbols
parameterizing the unipotent characters $\chi_\Lambda$ of $G = \SO_5(q)$.
The second column lists symbols parameterizing characters $\sigma$
of $L$ such that ${}^1\psi_\sigma$ is the Type~$1$ component of
$\chi_\Lambda\smash\downarrow^G_P$. The entries in columns 3-5 are
characters $\vartheta$ such that ${}^\varepsilon\psi_\vartheta$ is the
Type~$\varepsilon$ component of $\chi_\Lambda\smash\downarrow^G_P$.

The characters $\mu$, $\nu_j^\pm$ and $\Xi^\pm$ are defined in
Remark~\ref{rmk:charsubSO5}. The characters in brackets in the last
row of Table~\ref{tab:unipotresSO5} only exist for odd $q$. More
precisely: For odd prime powers $q$, the Type~$+$ component of 
$\chi_{[-,1^2,1]}\smash\downarrow^G_P$ is 
${}^+\psi_{1_{L^+}+\nu_1^++\nu_3^++\Xi^+}$ and the Type~$-$
component is ${}^-\psi_{\nu_3^-+\Xi^-}$. For even prime powers $q$,
the Type~$+$ component of $\chi_{[-,1^2,1]}\smash\downarrow^G_P$ is 
${}^+\psi_{1_{L^+}+\nu_1^++\Xi^+}$ and the Type~$-$ component is
${}^-\psi_{\Xi^-}$. 
\end{remark}

\begin{proof} (of Theorem~\ref{thm:unipotresSO5})
Let $\chi_\Lambda$ be a unipotent character of $G$. As in
Remark~\ref{rmk:valdegs} we write the restriction of $\chi_\Lambda$ to
$P$ as $\chi_\Lambda\smash\downarrow^G_P = {}^1\chi + {}^0\chi +
{}^+\chi + {}^-\chi$ where 
${}^\varepsilon\chi = {}^\varepsilon\psi_{\vartheta^\varepsilon}$ is
the Type~$\varepsilon$ component of $\chi_\Lambda\smash\downarrow^G_P$. 
The degrees $\vartheta^\varepsilon(1)$ can be computed from
Table~\ref{tab:uniGzj} via~\eqref{eq:vals_mat}. The result is given in
Table~\ref{tab:resSO5degs}; it turns out that it does not depend on
whether $q$ is odd or even.

\begin{table}
 \[
 \begin{array}{|c|cccc|}
 \hline \Lambda & \vartheta^1(1) & \vartheta^0(1) & \vartheta^+(1) & \vartheta^-(1)\\\hline
 {[2,-,1]}   & 1 & . & . & . \\
 {[-,-,3]}   & . & . & . & 1 \\
 {[1^2,-,1]} & 1 & 1 & . & 1 \\
 {[1,1,1]}   & q+1 & 1 & 1 & .\\
 {[-,2,1]}   & q & . & 1 & . \\
 {[-,1^2,1]} & q & q & q & q\\\hline
 \end{array}
 \]
 \caption{Component degrees of the restrictions of the unipotent
   irreducible characters of $\SO_5(q)$ to $P_5$ for odd $q$ and 
   even $q$.} 
\label{tab:resSO5degs}
\end{table}

The first row of Table~\ref{tab:unipotresSO5} is trivial. Also, the
first column of Table~\ref{tab:unipotresSO5} can be determined from 
Table~\ref{tab:resSO5degs} and the combinatorics of bipartitions which
encode Harish-Chandra induction and restriction. In particular, we
obtain 
\[ 
\R{G}{L}(1_L) = \R{G}{L}(\chi_{[1,-,1]}) = \chi_{[2,-,1]} + \chi_{[1^2,-,1]} + \chi_{[1,1,1]}.
\]
By Proposition~\ref{prop:resSt_SO} we have 
$1_L\smash\uparrow_L^P = {^1}\psi_{1_L} + {^0}\psi_{1_{P_3}} +
{^+}\psi_{1_{L^+}} + {^-}\psi_{1_{L^-}}$. Theorem~\ref{thm:indnu}~(b)
and the paragraph preceding it now yield for $\sigma = 1_L$ and 
$\Sigma = 1_L + \St_L$ the equation 
\[ 
\R{G}{L}(1_L)\downa{G}{P} = 
   3\cdot{^1}\psi_{1_{L}} + {^1}\psi_{\St_L} + 2\cdot{^0}\psi_{1_{P_3}} + 
   {^+}\psi_{1_{L^+}} + {^-}\psi_{1_{L^-}}.
\]
Rows 3 and 4 of Table~\ref{tab:unipotresSO5} now follow
from Table~\ref{tab:resSO5degs}. Next, we determine the restriction of the
Steinberg character $\St_G = \chi_{[-,1^2,1]}$ to $P$. To apply
Corollary~\ref{cor:resSt_SO} we have to describe the decomposition 
of $\St_{L'}\smash\downarrow^{L'}_{P_3}$ and of
$\St_L\smash\downarrow^L_{L^\pm}$ into irreducibles. 

Since $\St_{L'}$ vanishes on the non-trivial unipotent elements, the
non-linear character $\mu \in \Irr(P_3)$ of degree $q-1$ is a
constituent of $\St_{L'}\smash\downarrow^{L'}_{P_3}$. Since
$1_{P_3}\smash\uparrow_{P_3}^{L'} = 1_{L'} + \St_{L'}$, Frobenius
reciprocity implies that 
\begin{equation} \label{eq:resStL'}
\St_{L'}\downa{L'}{P_3} = 1_{P_3} + \mu.
\end{equation}
Next, we consider the restriction of $\St_L$ to $L^\pm$ and $K^\pm$,
where $K^\pm$ is the cyclic normal subgroup of $L^\pm$ of index $2$
which is defined in Remark~\ref{rmk:charsubSO5}~(b). 
Suppose that $q$ is odd. By~\cite[Theorem~6.5.9]{Carter} we have
$\St_L(1) = q$ and $\St_L(\mbx) = 1$ for all $\mbx \in K^+ \setminus \{1\}$. Thus, 
$\St_L\smash\downarrow^L_{K^+}$ is the sum of the trivial character 
$1_{K^+}$ and the regular character of $K^+$. It follows that 
exactly one of the characters $\nu_2^+$, $\nu_3^+$ is a constituent 
of~$\St_L\smash\downarrow^L_{L^+}$ and that this constituent occurs
with multiplicity one. By definition, this constituent is $\nu_3^+$ 
so that we get 
\begin{eqnarray*}
\St_L\smash\downarrow^L_{L^+} & = & 1_{L^+} + \nu_1^+ + \nu_3^+ + \Xi^+ \quad \text{or}\\
\St_L\smash\downarrow^L_{L^+} & = & 2 \cdot 1_{L^+} + \nu_3^+ + \Xi^+ \quad \text{or}\\
\St_L\smash\downarrow^L_{L^+} & = & 2 \cdot \nu_1^+ + \nu_3^+ + \Xi^+.
\end{eqnarray*}
Let $\mbx$, $\mbx'$ be representatives for the two conjugacy classes
of $L^+$ which are not contained in~$K^+$. Hence, $\mbx$ and $\mbx'$
are involutions and by~\cite[Theorem~6.5.9]{Carter} we have
$\St_L(\mbx), \St_L(\mbx') \in \{\pm 1\}$. Thus
\[
\langle 1_{L^+}, \St_L\smash\downarrow^L_{L^+} \rangle_{L^+} = 1 +
\frac{\frac{q-1}{2}(\St_L(\mbx) + \St_L(\mbx'))}{2(q-1)} = 1 +
\frac14(\St_L(\mbx) + \St_L(\mbx')).
\]
It follows that 
$\langle 1_{L^+}, \St_L\downarrow^L_{L^+} \rangle_{L^+} = 1$ and
therefore $\St_L\smash\downarrow^L_{L^+} = 1_{L^+} + \nu_1^+ + \nu_3^+ + \Xi^+$. 
Next, we consider the restriction $\St_L\smash\downarrow^L_{L^-}$ for
odd $q$. By~\cite[Theorem~6.5.9]{Carter} we have $\St_L(1) = q$ and
$\St_L(\mbx) = -1$ for all $\mbx \in K^- \setminus \{1\}$. Thus, 
$\St_L\smash\downarrow^L_{K^-}$ is the regular character of $K^-$ with
the trivial character $1_{K^-}$ removed. It follows that exactly one
of $\nu_2^-$, $\nu_3^-$ is a constituent of $\St_L\smash\downarrow^L_{L^-}$ 
and that this constituent occurs with multiplicity one. By definition,
this constituent is $\nu_3^-$ and thus 
$\St_L\smash\downarrow^L_{L^-} = \nu_3^- + \Xi^-$. 

Now suppose that $q$ is even. As above, we see that 
$\St_L\smash\downarrow^L_{K^+}$ is the sum of $1_{K^+}$ and the
regular character of $K^+$ and that 
$\langle 1_{L^+}, \St_L\smash\downarrow^L_{L^+} \rangle_{L^+} = 1$. 
Hence, in this case we obtain that 
$\St_L\smash\downarrow^L_{L^+} = 1_{L^+} + \nu_1^+ + \Xi^+$. 
As above, we see that $\St_L\smash\downarrow^L_{K^-}$ is the regular
character of $K^-$ with the trivial character removed and get 
$\St_L\smash\downarrow^L_{L^-} = \Xi^-$. 

Applying Corollary~\ref{cor:resSt_SO} we get
\begin{equation} \label{eq:steinSO5odd}
\St_G\smash\downarrow^G_P = \chi_{[-,1^2,1]}\smash\downarrow^G_P = 
{}^1\psi_{\St_L} + {}^0\psi_{1_{P_3}+\mu} + 
{}^+\psi_{1_{L^+}+\nu_1^++\nu_3^++\Xi^+} + 
{}^-\psi_{\nu_3^- + \Xi^-}
\end{equation}
for odd $q$ and
\begin{equation} \label{eq:steinSO5even}
\St_G\smash\downarrow^G_P = \chi_{[-,1^2,1]}\smash\downarrow^G_P = 
{}^1\psi_{\St_L} + {}^0\psi_{1_{P_3}+\mu} + 
{}^+\psi_{1_{L^+}+\nu_1^++\Xi^+} + {}^-\psi_{\Xi^-}
\end{equation}
for even $q$, which proves the entries in row 6 of
Table~\ref{tab:unipotresSO5}. 
Next, we determine row~5. We have
\begin{equation} \label{eq:HCStSO5}
\R{G}{L}(\St_L) = \R{G}{L}(\chi_{[-,1,1]}) = \chi_{[1,1,1]} + \chi_{[-,1^2,1]} + \chi_{[-,2,1]}
\end{equation}
and we want to determine $\R{G}{L}(\St_L)\smash\downarrow^G_P$. Hence, we
have to compute the summands on the right hand side of
\eqref{eq:LGP} where $\sigma = \St_L$. Since conjugation with $t$
permutes the unipotent characters of $L$ 
we have ${}^t\sigma = {}^t\St_L= \St_L$. Hence, Theorem~\ref{thm:indnu}~(a)
and \cite[Proposition~6.3.3]{Carter} imply that   
\begin{equation} \label{eq:righttermSO5}
{}^t\sigma\smash\downarrow^{{}^tP}_{L}\smash\uparrow_{L}^P =
\St_L\smash\uparrow_L^P = \St_G\smash\downarrow^G_P.
\end{equation}
Suppressing symbols for inflation as in Section~\ref{subsec:resHC}
we get from \eqref{eq:resStL'} that 
\begin{equation} 
{}^s\sigma\smash\downarrow^{{}^sP}_{RQ_K}\smash\uparrow_{RQ_K}^P = 
1_{P_3}\smash\uparrow_{RQ_K}^P + \mu\smash\uparrow_{RQ_K}^P =
{}^1\psi_{1_{L_3}}\smash\uparrow_{RQ_K}^P + {^+}\psi_{1_{L_3^+}}\smash\uparrow_{RQ_K}^P.
\end{equation}
Note that $L_3^+ = \{1\}$. From Theorem~\ref{thm:indnu} (b) and (d)
we obtain
\begin{eqnarray} \label{eq:s_termSO5}
{}^1\psi_{1_{L_3}}\smash\uparrow_{RQ_K}^P & = & {}^0\psi_{1_{P_3}} +
{}^1\psi_{\R{L_5}{L_3}(1_{L_3})} = {}^0\psi_{1_{P_3}} +
{}^1\psi_{1_L} + {}^1\psi_{\St_L} ,\\
{^+}\psi_{1_{L_3^+}}\smash\uparrow_{RQ_K}^P & = &
{}^+\psi_{\R{L^+}{L_3^+}(1_{L_3^+})} = {}^+\psi_{1_{L^+}} +
{}^+\psi_{\nu_1^+}. \nonumber
\end{eqnarray}
Thus, we get from \eqref{eq:LGP} and
\eqref{eq:HCStSO5}-\eqref{eq:s_termSO5} that
\begin{eqnarray*}
\R{G}{L}(\St_L)\smash\downarrow^G_P & = & {}^1\psi_{\St_L} + {}^0\psi_{1_{P_3}} +
{}^1\psi_{1_L} + {}^1\psi_{\St_L} + {}^+\psi_{1_{L^+}} +
{}^+\psi_{\nu_1^+} + \St_G\smash\downarrow^G_P\\
& = & \chi_{[1,1,1]}\smash\downarrow^G_P +
\chi_{[-,2,1]}\smash\downarrow^G_P + \St_G\smash\downarrow^G_P.
\end{eqnarray*}
Since we already know $\chi_{[1,1,1]}\smash\downarrow^G_P$ we get
$\chi_{[-,2,1]}\smash\downarrow^G_P = {}^1\psi_{\St_L} + {}^+\psi_{\nu_1^+}$,
completing row~5 of Table~\ref{tab:unipotresSO5}.

Finally, we consider $\chi_{[-,-,3]}\smash\downarrow^G_P$. We know
from Table~\ref{tab:resSO5degs} that 
$\chi_{[-,-,3]}\downa{G}{P} = {^-}\psi_\vartheta$ for a linear
character $\vartheta \in \Irr(L^-)$. Therefore from the definition 
of ${^-}\psi_\vartheta$ it follows
\[ 
\chi_{[-,-,3]}\downa{G}{L^-} = {^-}\psi_\vartheta\downa{P}{L^-} =
\vartheta\upa{L}{L^-}\downa{L}{L^-}.
\]
Suppose that $q$ is odd. It follows from
Remark~\ref{rmk:charsubSO5}~(b) that $L^-$ has exactly three conjugacy
classes of involutions and we choose representatives $\tau_1$,
$\tau_2$, $\tau_3$ for these classes such that 
$\tau_1, \tau_2 \not\in K^-$ and $\tau_3 \in Z(L^-)$. Let $c^L_j$
be the $L$-conjugacy class of~$\tau_j$ for $j=1,2,3$. We have already
seen that $\St_L\smash\downarrow^L_{L^-} = \nu_3^- + \Xi^-$. Since
$\Xi^-$ vanishes on $L^- \setminus K^-$ we have $\St_L(\tau_1) =
\nu_3^-(\tau_1) \neq \nu_3^-(\tau_2) = \St_L(\tau_2)$, hence 
$c^L_1 \neq c^L_2$. Also from Remark~\ref{rmk:charsubSO5}~(b) we get
that $c^L_3 \subseteq L' \unlhd L$ and 
$c^L_1, c^L_2 \not\subseteq L'$. Thus $c^L_1$, $c^L_2$, $c^L_3$ are
pairwise distinct. Since the conjugacy classes of 
$L = A \times L' \cong \F_q^\times \times \SO_3(q)$ are known we see
that $|C_L(\tau_1)|, |C_L(\tau_2)| \in \{2(q^2-1), 2(q-1)^2\}$. Hence 
\[
\chi_{[-,-,3]}(\tau_j) = \vartheta\smash\uparrow_{L^-}^L(\tau_j) \in 
\left\{ \frac{q^2-1}{2} \vartheta(\tau_j), \frac{(q-1)^2}{2}
\vartheta(\tau_j) \right\}
\]
for $j=1,2$. From the character values of $\chi_{[-,-,3]}$ 
we get $\vartheta(\tau_1) = \vartheta(\tau_2) = -1$ and hence 
$\vartheta = \nu_1^-$.

Suppose that $q$ is even. There is just one conjugacy class of
involutions in~$L^-$; let $\tau$ be a representative for this
class. The group $L \cong \F_q^\times \times \SO_3(q) \cong
\F_q^\times \times \SL_2(q)$ has only one conjugacy class of
involutions and we see that $|C_L(\tau)| = q(q-1)$. Hence
we compute $\chi_{[-,-,3]}(\tau) = \vartheta\smash\uparrow_{L^-}^L(\tau) 
= \frac12 q(q-1) \vartheta(\tau)$. From the character table of 
$\SO_5(q) \cong \Sp_4(q)$ in CHEVIE we get
$\chi_{[-,-,3]}(\tau) = -\frac12 q(q-1)$, thus $\vartheta = \nu_1^-$.
This completes the proof of Theorem~\ref{thm:unipotresSO5}.
\end{proof}

\section{Restrictions of unipotent characters of $\SO_7(q)$}
\label{sec:resuniSO7}

In this section we obtain some information on the decomposition
into irreducible characters of the restrictions of the unipotent
characters of $G_7 = \SO_7(q)$ to the parabolic subgroup $P_7$. 
In the whole section we assume that $m=3$ and $n=2m+1=7$. We
set $G:=\SO_7(q)$, $P:=P_7$, $L:=L_7$, and $L^\pm := L_7^\pm$ for
brevity. The degrees of the unipotent characters of $\SO_7(q)$ and
their labels are given in Table~\ref{tab:labelsSO7}. We
use the abbreviations $\phi_1 := q-1$, $\phi_2 := q+1$, 
$\phi_3 := q^2+q+1$, $\phi_4 := q^2+1$ and $\phi_6 := q^2-q+1$.

\begin{table}
 \centering
 \begin{tabular}{|c|c|c||c|c|c|}
  \hline
  Bipartition & Symbol & Degree &  Bipartition & Symbol & Degree \\
  \hline
  ${[3,-,1]}$ & $\begin{pmatrix}3\\-\end{pmatrix}$ & $1$ &
    ${[1^2,1,1]}$ & $\begin{pmatrix}1\:2\\1\end{pmatrix}$ & 
     $q^3\phi_3\phi_6$\\
  ${[2,1,1]}$ & $\begin{pmatrix}0\:3\\1\end{pmatrix}$ &
      $\frac{1}{2}q\phi_3\phi_4$ & ${[1,1^2,1]}$ &
      $\begin{pmatrix}0\:1\:3\\1\:2\end{pmatrix}$ & 
      $\frac{1}{2}q^4\phi_3\phi_4$\\ 
  ${[-,3,1]}$ & $\begin{pmatrix}0\:1\\3\end{pmatrix}$ &
      $\frac{1}{2}q\phi_4\phi_6$ & ${[-,2\,1,1]}$ &
      $\begin{pmatrix}0\:1\:2\\1\:3\end{pmatrix}$ & 
      $\frac{1}{2}q^4\phi_2^2\phi_6$\\
  ${[2\,1,-,1]}$ & $\begin{pmatrix}1\:3\\0\end{pmatrix}$ &
      $\frac{1}{2}q\phi_2^2\phi_6$ & ${[1^3,-,1]}$ &
      $\begin{pmatrix}1\:2\:3\\0\:1\end{pmatrix}$ & 
      $\frac{1}{2}q^4\phi_4\phi_6$\\
  ${[1,-,3]}$ & $\begin{pmatrix}0\:1\:3\\-\end{pmatrix}$ &
      $\frac{1}{2}q\phi_1^2\phi_3$ & ${[-,1,3]}$ &
      $\begin{pmatrix}0\:1\:2\:3\\1\end{pmatrix}$ & 
      $\frac{1}{2}q^4\phi_1^2\phi_3$\\
  ${[1,2,1]}$ & $\begin{pmatrix}0\:2\\2\end{pmatrix}$ &
      $q^2\phi_3\phi_6$ & ${[-,1^3,1]}$ &
      $\begin{pmatrix}0\:1\:2\:3\\1\:2\:3\end{pmatrix}$ & $q^9$\\
  \hline
 \end{tabular}
 \caption{Labels and degrees of the unipotent characters of $\SO_7(q)$.}
 \label{tab:labelsSO7}
\end{table}

As in Section~\ref{sec:charvalU} we denote the $P$-conjugacy class
of the unipotent element~$\mbz_j$ by~$c_j$ for $j=0,1,2$. The
conjugacy classes and the values of the unipotent characters 
of $G=\SO_7(q)$ were computed by Frank L\"ubeck (private communication). 
For odd $q$ the group $G$ has $10$ unipotent conjugacy classes
and for even $q$ the group~$G$ has $12$ unipotent classes.
Remark~\ref{rmk:conjPG} and the centralizer
orders~\eqref{eq:cenorduniP} determine the fusion of the classes 
$c_0$, $c_1$, $c_2$ into the unipotent classes of $G$ uniquely so that
we can read off the values of the unipotent characters of $G$ on the
elements $\mbz_0$, $\mbz_1$, $\mbz_2$ in Table~\ref{tab:uniGzjSO7}
from L\"ubeck's data. In the table zeros are replaced by dots.

\begin{table}[h]
 \centering
 \[ 
 \begin{array}{|c|ccc|ccc|}
 \hline 
 & \multicolumn{3}{|c|}{q \,\, \text{odd}} & \multicolumn{3}{|c|}{q
     \,\, \text{even}}\\
 \Lambda & \mbz_0 & \mbz_1 & \mbz_2 & \mbz_0 & \mbz_1 & \mbz_2 \\\hline
  {[2,1,1]} & \frac12 q (2 q^2+q+1) & \frac12 q \phi_2^2 &
   \frac12 q \phi_4& \frac12 q (2q^2+q+1) & 
   \frac12 q \phi_2 \phi_4 & \frac12 q \phi_3 \\
  {[-,3,1]} & \frac12 q (2 q^2-q+1)  & \frac12 q \phi_1^2 &
   \frac12 q \phi_4 & \frac12 q (2q^2-q+1) & 
   -\frac12 q \phi_1 \phi_4 & \frac12 q \phi_6\\
  {[2\,1,-,1]} & \frac12 q \phi_2 & \frac12 q \phi_2^2 &
   \frac12 q \phi_4 & \frac12 q \phi_2 & \frac12 q \phi_2 \phi_4 & 
   \frac12 q \phi_3\\
  {[1,-,3]} & -\frac12 q \phi_1 & \frac12 q \phi_1^2 &
   \frac12 q \phi_4 & -\frac12 q \phi_1 & 
   -\frac12 q \phi_1 \phi_4 & \frac12 q \phi_6\\
  {[1,2,1]} & q^2 \phi_4 & q^2 & q^2&
   q^2 \phi_4 & q^2 & q^2\\
  {[1^2,1,1]} & q^3 & 2 q^3 & .&
   q^3 & q^3 \phi_4 & q^3\\
  {[1,1^2,1]} & \frac12 q^4 \phi_2 & q^4 & .&
   \frac12 q^4 \phi_2 & \frac12 q^4 \phi_4 & \frac12 q^4\\
  {[-,2\,1,1]} & \frac12 q^4 \phi_2 & . & q^4&
   \frac12 q^4 \phi_2 & -\frac12 q^4 \phi_1 \phi_2 & \frac12 q^4\\
  {[1^3,-,1]} & -\frac12 q^4 \phi_1 & q^4 & .&
   -\frac12 q^4 \phi_1 & \frac12 q^4 \phi_4 & \frac12 q^4\\
  {[-,1,3]} & -\frac12 q^4 \phi_1 & . & q^4&
   -\frac12 q^4 \phi_1 & -\frac12 q^4 \phi_1 \phi_2 & \frac12 q^4\\
  \hline
 \end{array}
\]
\caption{Values $\chi_\Lambda(\mbz_j)$ of unipotent characters of
  $\SO_7(q)$.}
\label{tab:uniGzjSO7}
\end{table}

Additionally, we have the values $\chi_{[3,-,1]}(\mbz_j) = 1$ for all
$j$ for the trivial character and $\chi_{[-,1^3,1]}(\mbz_j) = 0$ for
all $j$ for the Steinberg character. 

\begin{theorem}\label{thm:unipotresSO7}
For odd $q$ and even $q$ the Type~$1$ and Type~$0$ components of the
restrictions of the unipotent characters of $\SO_7(q)$ to $P_7$ are
given in Table~\ref{tab:unipotresSO7}. 
\end{theorem}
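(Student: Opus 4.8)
The plan is to run, for $G=\SO_7(q)$, exactly the recursive scheme used in the proof of Theorem~\ref{thm:unipotresSO5}, now feeding in the restrictions of the unipotent characters of $\SO_5(q)$ to $P_5$ established in Theorem~\ref{thm:unipotresSO5} and Table~\ref{tab:unipotresSO5}. The first step is purely computational: writing $\chi_\Lambda\downa{G}{P}={}^1\chi+{}^0\chi+{}^+\chi+{}^-\chi$ with ${}^\varepsilon\chi={}^\varepsilon\psi_{\vartheta^\varepsilon}$ as in Remark~\ref{rmk:valdegs}, one plugs the entries of Table~\ref{tab:uniGzjSO7} into \eqref{eq:vals_mat} to obtain the degrees $\vartheta^\varepsilon(1)$ for all twelve unipotent characters and both parities of $q$; in particular the Type~$1$ and Type~$0$ degrees are read off directly.

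\emph{Type~$1$ components.} Since the Type~$1$ characters of $P$ are precisely the inflations of $\Irr(L)$, one has $\vartheta^1={}^*\R{G}{L}(\chi_\Lambda)$, the Harish-Chandra restriction of $\chi_\Lambda$ to $L\cong\SO_5(q)\times\F_q^\times$; for a unipotent character this is a nonnegative sum of characters (unipotent character of $\SO_5(q)$)$\,\boxtimes\,1_{\F_q^\times}$ governed by the combinatorics of symbols. Together with the degrees from the first step this pins down the Type~$1$ column, just as the first column of Table~\ref{tab:unipotresSO5} was obtained. Along the way one records the six Harish-Chandra induced characters $\R{G}{L}(\sigma)$, where $\sigma$ runs over the unipotent characters of $L'\cong\SO_5(q)$, extended trivially to $L$; because $\SO_7(q)$ has no cuspidal unipotent character and $L$ is a maximal Levi, every one of the twelve unipotent characters of $\SO_7(q)$ is a constituent of at least one $\R{G}{L}(\sigma)$.

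\emph{Recursion for the Type~$0$ components.} For each such $\sigma$, equation \eqref{eq:LGP} together with Theorem~\ref{thm:indnu}(a) gives
\[
\R{G}{L}(\sigma)\downa{G}{P}\;=\;{}^1\psi_\sigma\;+\;{}^s\sigma\downa{{}^sP}{RQ_K}\upa{P}{RQ_K}\;+\;({}^t\sigma)\upa{P}{L}.
\]
The middle term is a sum of characters $({}^s\nu)\upa{P}{RQ_K}$ over the irreducible constituents $\nu$ of $\sigma\downa{\SO_5(q)}{P_5}$, which are listed in Table~\ref{tab:unipotresSO5}, and each $({}^s\nu)\upa{P}{RQ_K}$ is resolved by the matching case of Theorem~\ref{thm:indnu}: part~(b) when $\nu$ is of Type~$1$ (yielding one ${}^0\psi$ and one ${}^1\psi$), part~(c) when $\nu$ is of Type~$0$ (yielding a single ${}^0\psi_\Sigma$ with $\Sigma$ an explicit induced character of $P_5$ in the sense of (c)), and parts~(d),(e) when $\nu$ is of Type~$\pm$ (yielding only Type~$\pm$ characters, irrelevant here). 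The term $({}^t\sigma)\upa{P}{L}$ is expanded by Proposition~\ref{prop:resSt_SO}, whose Type~$1$ and Type~$0$ parts are ${}^1\psi_{{}^t\sigma}$ and $\sum_\mu\langle{}^t\sigma\downa{\SO_5(q)}{P_5},\mu\rangle\,{}^0\psi_\mu$; here one uses that $t$ permutes the unipotent characters of $L'$ in a known way, so that ${}^t\sigma\downa{\SO_5(q)}{P_5}$ is again taken from Table~\ref{tab:unipotresSO5}. This determines the Type~$1$ and Type~$0$ parts of $\R{G}{L}(\sigma)\downa{G}{P}$ for all six $\sigma$; adjoining $\St_G\downa{G}{P}=\chi_{[-,1^3,1]}\downa{G}{P}$, which comes directly from Corollary~\ref{cor:resSt_SO} (using $\St_{\SO_5(q)}\downa{\SO_5(q)}{P_5}$ and $\St_L\downa{L}{L_7^\pm}$ with $L_7^\pm\cong\GO_4^\pm(q)$), one solves these equations for the individual $\chi_\Lambda\downa{G}{P}$ by processing the six inductions in an order of increasing complexity (beginning with $\sigma=1_{\SO_5(q)}$, ending with $\sigma=\St_{\SO_5(q)}$ and the cuspidal $\chi_{[-,-,3]}$) and subtracting at each stage the contributions of the $\chi_\Lambda\downa{G}{P}$ already known; the degrees from the first step serve as a running consistency check and break the few residual ambiguities.

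\emph{Main difficulty.} The conceptual content is identical to the $\SO_5(q)$ case; the obstacle is the volume and delicacy of the bookkeeping — keeping the inflations, restrictions and conjugations straight, computing correctly the action of $t$ and $s$ on the unipotent characters of $\SO_5(q)$ and on $\Irr(P_5)$, and determining the restrictions of the unipotent characters of $\SO_5(q)$ to the subgroups $L_7^\pm\cong\GO_4^\pm(q)$ needed to run Proposition~\ref{prop:resSt_SO} — together with the combinatorial task of ordering the six inductions $\R{G}{L}(\sigma)$ so that each unipotent character of $\SO_7(q)$ is separated out. Since the theorem claims only the Type~$1$ and Type~$0$ components, the Type~$\pm$ parts (whose final identification inside $\GO_4^\pm(q)$-type subgroups is harder) need only be carried along well enough not to lose the Type~$1,0$ information.
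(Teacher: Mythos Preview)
Your proposal is correct and follows essentially the same route as the paper: compute the component degrees via~\eqref{eq:vals_mat} and Table~\ref{tab:uniGzjSO7}, read off the Type~$1$ column from Harish-Chandra combinatorics, and then unwind~\eqref{eq:LGP} for each unipotent $\sigma\in\Irr(L')$ using Theorem~\ref{thm:indnu}(a)--(e), Proposition~\ref{prop:resSt_SO}, Corollary~\ref{cor:resSt_SO}, and the $\SO_5(q)$ input from Table~\ref{tab:unipotresSO5}, subtracting already-known restrictions in a suitable order. Two small remarks: first, the paper does not appeal to a ``known'' permutation action of $t$ but checks ${}^t\sigma=\sigma$ for each unipotent $\sigma$ individually (by degree and by the value on the $t$-stable class of $\mbz_0$); second, the restrictions $\St_L\downa{L}{L_7^\pm}$ to $\GO_4^\pm(q)$ are \emph{not} needed for the Type~$0$ column, since the Type~$0$ part of Proposition~\ref{prop:resSt_SO} and Corollary~\ref{cor:resSt_SO} involves only $\sigma\downa{L'}{P_5}$---indeed the paper defers the $\GO_4^\pm(q)$ restrictions to future work (Remark~\ref{rmk:Types+-}).
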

\begin{table}[h]
 \[
 \begin{array}{|c||c|c|c|c|c|}
  \hline
   & \multicolumn{5}{|c|}{\chi_\Lambda\downarrow^G_P}\\
   & \text{Type} \,\, 1 & \multicolumn{4}{|c|}{\text{Type} \,\, 0 }\\
  \Lambda && 1 & 0 & + & -\\
  \hline
  {[3,-,1]} & [2,-,1] & & & & \\
  {[2,1,1]} & [2,-,1]+[1,1,1] & [1,-,1] & & & \\
  {[-,3,1]} & [-,2,1] & & & & \\
  {[2\,1,-,1]} & [2,-,1]+[1^2,-,1] & [1,-,1] & & & \\
  {[1,-,3]} & [-,-,3] & & & & \\
  {[1,2,1]} & [1,1,1]+[-,2,1] & [-,1,1] & & & \\
  {[1^2,1,1]} & [1^2,-,1]+[1,1,1] & [1,-,1]+[-,1,1] & 1_{P_3} & & \\
  {[1,1^2,1]} & [-,1^2,1]+[1,1,1] & [1,-,1]+[-,1,1] & 1_{P_3} & 1_{L_5^+} & \\
  {[-,2\,1,1]} & [-,1^2,1]+[-,2,1] & [-,1,1] & & \nu_1^+ & \\
  {[1^3,-,1]} & [1^2,-,1] & [1,-,1] & 1_{P_3} & & 1_{L_5^-} \\
  {[-,1,3]} & [-,-,3] & & & & \nu_1^- \\
  {[-,1^3,1]} & [-,1^2,1] & [-,1,1] & \hspace{-0.1cm} 1_{P_3}+\mu \hspace{-0.1cm} & 
    \hspace{-0.1cm} 1_{L_5^+}+\nu_1^+(+\nu_3^+)+\Xi^+ \hspace{-0.1cm}
    & \hspace{-0.1cm} (\nu_3^-+)\Xi^- \hspace{-0.1cm} \\\hline 
 \end{array}
 \]
 \caption{Type~$1$ and Type~$0$ components of the restrictions of the
   unipotent characters of $\SO_7(q)$ to $P_7$.} 
\label{tab:unipotresSO7}
\end{table}

\begin{remark} \label{rmk:rmkresSO7}
The first column of Table~\ref{tab:unipotresSO7} contains the symbols
parameterizing the unipotent characters $\chi_\Lambda$ of $G = \SO_7(q)$.
The second column lists symbols parameterizing characters $\sigma$
of $L$ such that ${}^1\psi_\sigma$ is the Type~$1$ component of
$\chi_\Lambda\smash\downarrow^G_P$. The entries in columns 3-6 are
characters $\vartheta^1$, $\vartheta^0$, $\vartheta^+$, $\vartheta^-$,
respectively, such that ${}^0\psi_{({}^1\psi_{\vartheta^1})} + 
{}^0\psi_{({}^0\psi_{\vartheta^0})} +
{}^0\psi_{({}^+\psi_{\vartheta^+})} +
{}^0\psi_{({}^-\psi_{\vartheta^-})}$ is the Type~$0$ component of
$\chi_\Lambda\smash\downarrow^G_P$. 

The characters $\mu$, $\nu_j^\pm$ and $\Xi^\pm$ are defined in
Remark~\ref{rmk:charsubSO5}. The characters in brackets in the last
row of Table~\ref{tab:unipotresSO7} only exist for odd $q$. More
precisely: The Type~$0$ component of
$\chi_{[-,1^3,1]}\smash\downarrow^G_P$ is ${}^0\psi_\Gamma$ where
\[
\Gamma := {}^1\psi_{\chi_{[-,1,1]}} 
+ {}^0\psi_{1_{P_3}} + {}^0\psi_{\mu}
+ {}^+\psi_{1_{L_5^+}} + {}^+\psi_{\nu_1^+} 
+ {}^+\psi_{\nu_3^+} + {}^+\psi_{\Xi^+}
+ {}^-\psi_{\nu_3^-} + {}^-\psi_{\Xi^-}
\]
for odd $q$ and
\[
\Gamma := {}^1\psi_{\chi_{[-,1,1]}} 
+ {}^0\psi_{1_{P_3}} + {}^0\psi_{\mu}
+ {}^+\psi_{1_{L_5^+}} + {}^+\psi_{\nu_1^+} 
+ {}^+\psi_{\Xi^+} + {}^-\psi_{\Xi^-}
\]
for even $q$.
\end{remark}

\begin{proof} (of Theorem~\ref{thm:unipotresSO7})
Let $\chi_\Lambda$ be a unipotent character of $G$. As in
Remark~\ref{rmk:valdegs} we write the restriction of $\chi_\Lambda$ to
$P$ as $\chi_\Lambda\smash\downarrow^G_P = {}^1\chi + {}^0\chi +
{}^+\chi + {}^-\chi$ where 
${}^\varepsilon\chi = {}^\varepsilon\psi_{\vartheta^\varepsilon}$ is
the Type~$\varepsilon$ component of $\chi_\Lambda\smash\downarrow^G_P$. 
The degrees $\vartheta^\varepsilon(1)$ can be computed from
Table~\ref{tab:uniGzjSO7} via~\eqref{eq:vals_mat}. The result is given in
Table~\ref{tab:resSO7degs}; it turns out that it does not depend on
whether $q$ is odd or even.

\begin{table}
 \[
 \begin{array}{|c|cccc|}
 \hline \Lambda & \vartheta^1(1) & \vartheta^0(1) & \vartheta^+(1) & \vartheta^-(1)\\\hline
  {[3,-,1]} & 1 & . & . & .\\
  {[2,1,1]} &  \frac12 (q+2)(q^2+1) & 1 & 1 & .\\
  {[-,3,1]} & \frac12 q(q^2+1) & . & 1 & .\\
  {[2\,1,-,1]} & \frac12 (q+1) (q^2-q+2) & 1 & . & 1\\
  {[1,-,3]} & \frac12 q (q-1)^2 & . & . & 1\\
  {[1,2,1]} & q (q^2+q+1) & q & 2 q & .\\
  {[1^2,1,1]} & q (q^2+q+1) & q (q+1) & q^2 & q^2\\
  {[1,1^2,1]} & \frac12 q (2q+1)(q^2+1) & \frac12 q (q+1)^2 & \frac12 q (q+1)^2 & \frac12 q (q^2+1)\\
  {[-,2\,1,1]} & \frac12 q (q^2+2q^3+1) & \frac12 q (q^2+1) & \frac12 q (q+1)^2 & \frac12 q (q^2+1)\\
  {[1^3,-,1]} & \frac12 q (q^2+1) & \frac12 q (q^2+1) & \frac12 q (q-1)^2 & \frac12 q (q^2+1)\\
  {[-,1,3]} & \frac12 q(q-1)^2 & \frac12 q (q-1)^2 & \frac12 q (q-1)^2 & \frac12 q (q^2+1)\\
  {[-,1^3,1]} & q^4 & q^4 & q^4 & q^4\\\hline
 \end{array}
 \]
 \caption{Component degrees of the restrictions of the unipotent
   irreducible characters of $\SO_7(q)$ to $P_7$ for both odd $q$ and 
   even $q$.} 
\label{tab:resSO7degs}
\end{table}

The first row of Table~\ref{tab:unipotresSO7} is trivial and rows 3 
and 5 follow from Table~\ref{tab:resSO7degs}. Also, the
first column of Table~\ref{tab:unipotresSO7} can be determined from 
Table~\ref{tab:resSO7degs} and the combinatorics of bipartitions which
encode Harish-Chandra induction and restriction. In particular, we
obtain 
\[ 
\R{G}{L}(1_L) = \R{G}{L}(\chi_{[2,-,1]}) = \chi_{[3,-,1]} + \chi_{[2,1,1]} + \chi_{[21,-,1]}.
\]
By Proposition~\ref{prop:resSt_SO} we have 
$1_L\smash\uparrow_L^P = {^1}\psi_{1_L} + {^0}\psi_{1_{P_5}} +
{^+}\psi_{1_{L^+}} + {^-}\psi_{1_{L^-}}$. Theorem~\ref{thm:indnu}~(b)
and the paragraph preceding it now yield for $\sigma = 1_L$,
$\Sigma = \chi_{[2,-,1]}+\chi_{[1^2,-,1]}+\chi_{[1,1,1]}$ the equation 
\begin{eqnarray*}
\R{G}{L}(1_L)\downa{G}{P} & = & 1_P + 1_{RQ_K}\smash\uparrow_{RQ_K}^P + 1_{L}\smash\uparrow_L^P\\
  & = & 1_P + {^0}\psi_{1_{P_5}} + {^1}\psi_\Sigma + 
  {^1}\psi_{1_L} + {^0}\psi_{1_{P_5}} +{^+}\psi_{1_{L^+}} +
  {^-}\psi_{1_{L^-}}.
\end{eqnarray*}
Now the entries in rows 2 and 4 follow from
Table~\ref{tab:resSO7degs}. We set $\eta := \chi_{[-,-,3]} \in \Irr(L)$. 
We have 
\begin{equation} \label{eq:etaHC}
\R{G}{L}(\eta) = \R{G}{L}(\chi_{[-,-,3]}) = \chi_{[1,-,3]}+\chi_{[-,1,3]}.
\end{equation}
Since conjugation with $t$ permutes the unipotent characters of $L$
and since $\eta$ is the unique non-trivial unipotent character of $L$
of degree $\frac12 q (q-1)^2$ we have ${}^t\eta = \eta$. Hence, we get
from~\eqref{eq:LGP} that
\[
\R{G}{L}(\eta)\smash\downarrow^G_P = {}^1\psi_\eta + 
{}^s\eta\smash\downarrow^{{}^sP}_{RQ_K}\smash\uparrow_{RQ_K}^P +
\eta\smash\uparrow_L^P.
\]
By Proposition~\ref{prop:resSt_SO} and
Theorem~\ref{thm:unipotresSO5} the character ${}^0\psi_{({}^-\psi_{\nu_1^-})}$ 
is a constituent of~$\eta\smash\uparrow_L^P$. From~\eqref{eq:etaHC} we
get $\R{G}{L}(\eta)\smash\downarrow^G_P = 
\chi_{[1,-,3]}\smash\downarrow^G_P+\chi_{[-,1,3]}\smash\downarrow^G_P$,
and since $\chi_{[1,-,3]}\smash\downarrow^G_P$ does not have any
Type~$0$ component, we see that ${}^0\psi_{({}^-\psi_{\nu_1^-})}$ is a 
constituent of $\chi_{[-,1,3]}\smash\downarrow^G_P$. From the degree
${}^-\psi_{\nu_1^-}(1) = \frac12 q (q-1)^2$ and Table~\ref{tab:resSO7degs}
we can conclude that ${}^0\psi_{({}^-\psi_{\nu_1^-})}$ is the Type~$0$
component of $\chi_{[-,1,3]}\smash\downarrow^G_P$ proving the entries
in row 11 of Table~\ref{tab:unipotresSO7}. We have 
\begin{equation} \label{eq:etaHC-21}
\R{G}{L}(\chi_{[-,2,1]}) = \chi_{[-,3,1]}+\chi_{[1,2,1]}+\chi_{[-,21,1]}.
\end{equation}
Conjugation with $t$ permutes the unipotent characters of $L$. The
characters $\chi_{[-,2,1]}$ and $\chi_{[1^2,-,1]}$ are the only unipotent
characters of $L$ of degree $\frac12 q(q^2+1)$. Since the conjugacy
class of $L$ containing $\mbz_0$ is fixed by conjugation with $t$ and
since the values of $\chi_{[-,2,1]}$ and $\chi_{[1^2,-,1]}$ on this class
differ we have ${}^t\chi_{[-,2,1]}=\chi_{[-,2,1]}$ and 
${}^t\chi_{[1^2,-,1]}=\chi_{[1^2,-,1]}$. Hence, we get from~\eqref{eq:LGP} 
that
\[
\R{G}{L}(\chi_{[-,2,1]})\smash\downarrow^G_P = {}^1\psi_{\chi_{[-,2,1]}} + 
{}^s\chi_{[-,2,1]}\smash\downarrow^{{}^sP}_{RQ_K}\smash\uparrow_{RQ_K}^P +
\chi_{[-,2,1]}\smash\uparrow_L^P.
\]
By Theorem~\ref{thm:unipotresSO5} and Theorem~\ref{thm:indnu}~(b) the
character ${}^0\psi_{{}^1\psi_{\chi_{[-,1,1]}}}$ is a constituent of 
${}^s\chi_{[-,2,1]}\smash\downarrow^{{}^sP}_{RQ_K}\smash\uparrow_{RQ_K}^P$.
Furthermore, it follows from Theorem~\ref{thm:unipotresSO5} and
Proposition~\ref{prop:resSt_SO} that 
${}^0\psi_{{}^1\psi_{\chi_{[-,1,1]}}} + {}^0\psi_{({}^+\psi_{\nu_1^+})}$ 
is a subcharacter of $\chi_{[-,2,1]}\smash\uparrow_L^P$. Thus,
\eqref{eq:etaHC-21} implies that 
$2 \cdot {}^0\psi_{{}^1\psi_{\chi_{[-,1,1]}}} + {}^0\psi_{({}^+\psi_{\nu_1^+})}$ 
is a subcharacter of $\chi_{[-,3,1]}\smash\downarrow^G_P + 
\chi_{[1,2,1]}\smash\downarrow^G_P + \chi_{[-,21,1]}\smash\downarrow^G_P$.
Since $\chi_{[-,3,1]}\smash\downarrow^G_P$ does not have any
Type~$0$ component, we see that 
$2 \cdot {}^0\psi_{{}^1\psi_{\chi_{[-,1,1]}}} + {}^0\psi_{({}^+\psi_{\nu_1^+})}$ 
is a subcharacter of the restriction $\chi_{[1,2,1]}\smash\downarrow^G_P +
\chi_{[-,21,1]}\smash\downarrow^G_P$. Because
${}^1\psi_{\chi_{[-,1,1]}}(1) = q$ and ${}^+\psi_{\nu_1^+}(1) =
\frac12 q (q^2-1)$ we get from Table~\ref{tab:resSO7degs} that 
${}^0\psi_{{}^1\psi_{\chi_{[-,1,1]}}}$ is the Type~$0$ component of 
$\chi_{[1,2,1]}\smash\downarrow^G_P$ and
${}^0\psi_{{}^1\psi_{\chi_{[-,1,1]}}} + {}^0\psi_{({}^+\psi_{\nu_1^+})}$  
is the Type~$0$ component of $\chi_{[-,21,1]}\smash\downarrow^G_P$.
This proves the entries in rows 6 and 9 of Table~\ref{tab:unipotresSO7}. 

From Corollary~\ref{cor:resSt_SO} and Theorem~\ref{thm:unipotresSO5}
we get that the Type~$0$ component of the restriction 
$\chi_{[-,1^3,1]}\smash\downarrow^G_P = \St_G\smash\downarrow^G_P$ is
\begin{gather*}
{}^0\psi_{({}^1\psi_{\chi_{[-,1,1]}})} 
+ {}^0\psi_{({}^0\psi_{1_{P_3}})} 
+ {}^0\psi_{({}^0\psi_\mu)} 
+ {}^0\psi_{({}^+\psi_{1_{L_5^+}})}  
+ {}^0\psi_{({}^+\psi_{1_{\nu_1^+}})}  
\left(+ {}^0\psi_{({}^+\psi_{1_{\nu_3^+}})}\right)\\
+ {}^0\psi_{({}^+\psi_{\Xi^+})}  
\left(+ {}^0\psi_{({}^-\psi_{1_{\nu_3^-}})}\right)
+ {}^0\psi_{({}^-\psi_{\Xi^-})}
\end{gather*}
where the summands in brackets only occur for odd $q$. This proves the
entries in row 12 of Table~\ref{tab:unipotresSO7}. We have 
\begin{equation} \label{eq:etaHC-1h2}
\R{G}{L}(\chi_{[-,1^2,1]}) = \chi_{[1,1^2,1]}+\chi_{[-,21,1]}+\chi_{[-,1^3,1]}.
\end{equation}
Since conjugation with $t$ permutes the unipotent characters of $L$
we have ${}^t\chi_{[-,1^2,1]}={}^t\St_L = \chi_{[-,1^2,1]}$. Hence, we get
from~\eqref{eq:LGP} and \cite[Proposition~6.3.3]{Carter} that
\begin{eqnarray*}
\R{G}{L}(\chi_{[-,1^2,1]})\smash\downarrow^G_P & = & {}^1\psi_{\chi_{[-,1^2,1]}} + 
{}^s\chi_{[-,1^2,1]}\smash\downarrow^{{}^sP}_{RQ_K}\smash\uparrow_{RQ_K}^P +
\chi_{[-,1^2,1]}\smash\uparrow_L^P\\
& = & {}^1\psi_{\chi_{[-,1^2,1]}} + 
{}^s\chi_{[-,1^2,1]}\smash\downarrow^{{}^sP}_{RQ_K}\smash\uparrow_{RQ_K}^P +
\chi_{[-,1^3,1]}\smash\downarrow^G_P.
\end{eqnarray*}
Together with~\eqref{eq:etaHC-1h2} this implies that the Type~$0$
component of
$\chi_{[1,1^2,1]}\smash\downarrow^G_P+\chi_{[-,21,1]}\smash\downarrow^G_P$ 
is the Type~$0$ component of 
${}^s\chi_{[-,1^2,1]}\smash\downarrow^{{}^sP}_{RQ_K}\smash\uparrow_{RQ_K}^P$.
We obtain from Theorems~\ref{thm:unipotresSO5} and \ref{thm:indnu}~(b), (c)
that ${}^0\psi_{({}^1\psi_{\chi_{[-,1,1]}})} + {}^0\psi_\Sigma$ is a subcharacter of 
${}^s\chi_{[-,1^2,1]}\smash\downarrow^{{}^sP}_{RQ_K}\smash\uparrow_{RQ_K}^P$,
where $\Sigma := 1_{{}^rP_5 \cap P_5}\smash\uparrow_{{}^rP_5 \cap P_5}^{P_5}$.
The permutation character $\Sigma$ can be computed via Theorem~\ref{thm:indnu}
(b), and we get
$\Sigma = {}^0\psi_{1_{P_3}} + {}^1\psi_{1_{L_5}} + {}^1\psi_{\St_{L_5}}$
so that
\[
{}^0\psi_{({}^1\psi_{\chi_{[-,1,1]}})} + {}^0\psi_{({}^0\psi_{1_{P_3}})} +
{}^0\psi_{({}^1\psi_{\chi_{[1,-,1]}})} + {}^0\psi_{({}^1\psi_{\chi_{[-,1,1]}})} 
\]
is a subcharacter of the Type~$0$ component of
$\chi_{[1,1^2,1]}\smash\downarrow^G_P+\chi_{[-,21,1]}\smash\downarrow^G_P$.
Because we already know the Type~$0$ component of
$\chi_{[-,21,1]}\smash\downarrow^G_P$ we see that 
\[
{}^0\psi_{({}^0\psi_{1_{P_3}})} +
{}^0\psi_{({}^1\psi_{\chi_{[1,-,1]}})} +
{}^0\psi_{({}^1\psi_{\chi_{[-,1,1]}})}  
\]
is a subcharacter of the Type~$0$ component of $\chi_{[1,1^2,1]}\smash\downarrow^G_P$.
Next, we consider the Type~$0$ component of
$\R{G}{L}(\chi_{[1,1,1]})\smash\downarrow^G_P$. We have
\begin{equation} \label{eq:etaHC-111}
\R{G}{L}(\chi_{[1,1,1]}) = \chi_{[2,1,1]}+\chi_{[1,2,1]}+\chi_{[1^2,1,1]}+\chi_{[1,1^2,1]}.
\end{equation}
As before, we see from the degrees that conjugation with $t$ fixes
$\chi_{[1,1,1]}$. Hence we get from from~\eqref{eq:LGP} that
\[
\R{G}{L}(\chi_{[1,1,1]})\smash\downarrow^G_P = {}^1\psi_{\chi_{[1,1,1]}} + 
{}^s\chi_{[1,1,1]}\smash\downarrow^{{}^sP}_{RQ_K}\smash\uparrow_{RQ_K}^P +
\chi_{[1,1,1]}\smash\uparrow_L^P.
\]
Via Theorems~\ref{thm:unipotresSO5} and \ref{thm:indnu}~(b),(c) we
can compute the Type~$0$ component of 
${}^s\chi_{[1,1,1]}\smash\downarrow^{{}^sP}_{RQ_K}\smash\uparrow_{RQ_K}^P$
and get:
\[
{}^0\psi_{({}^1\psi_{\chi_{[1,-,1]}})} + {}^0\psi_{({}^1\psi_{\chi_{[-,1,1]}})} + {}^0\psi_\Sigma = 
2 \cdot {}^0\psi_{({}^1\psi_{\chi_{[1,-,1]}})} + 2 \cdot {}^0\psi_{({}^1\psi_{\chi_{[-,1,1]}})} +
{}^0\psi_{({}^0\psi_{1_{P_3}})}.
\]
The Type~$0$ component of $\chi_{[1,1,1]}\smash\uparrow_L^P$ can be
determined with the help of Theorem~\ref{thm:unipotresSO5} and
Proposition~\ref{prop:resSt_SO} and we get:
\[
{}^0\psi_{({}^1\psi_{\chi_{[1,-,1]}})} + {}^0\psi_{({}^1\psi_{\chi_{[-,1,1]}})} +
{}^0\psi_{({}^0\psi_{1_{P_3}})} + {}^0\psi_{({}^+\psi_{1_{L_5^+}})}.
\]
Together with \eqref{eq:etaHC-111} it follows that the Type~$0$
component of
$\chi_{[2,1,1]}\smash\downarrow^G_P +
\chi_{[1,2,1]}\smash\downarrow^G_P +
\chi_{[1^2,1,1]}\smash\downarrow^G_P + \chi_{[1,1^2,1]}\smash\downarrow^G_P$ is:
$3 \cdot {}^0\psi_{({}^1\psi_{\chi_{[1,-,1]}})} + 3 \cdot {}^0\psi_{({}^1\psi_{\chi_{[-,1,1]}})} +
2 \cdot {}^0\psi_{({}^0\psi_{1_{P_3}})} + {}^0\psi_{({}^+\psi_{1_{L_5^+}})}$.
Since we already know the Type~$0$ component of $\chi_{[2,1,1]}\smash\downarrow^G_P +
\chi_{[1,2,1]}\smash\downarrow^G_P$ we see that the Type~$0$ component
of $\chi_{[1^2,1,1]}\smash\downarrow^G_P + \chi_{[1,1^2,1]}\smash\downarrow^G_P$ is:
\begin{equation} \label{eq:1h21+11h2type0}
2 \cdot {}^0\psi_{({}^1\psi_{\chi_{[1,-,1]}})} + 2 \cdot {}^0\psi_{({}^1\psi_{\chi_{[-,1,1]}})} +
2 \cdot {}^0\psi_{({}^0\psi_{1_{P_3}})} + {}^0\psi_{({}^+\psi_{1_{L_5^+}})}.
\end{equation}
Next, we consider the Type~$0$ component of
$\R{G}{L}(\chi_{[1^2,-,1]})\smash\downarrow^G_P$. We have
\begin{equation} \label{eq:etaHC-1h2m}
\R{G}{L}(\chi_{[1^2,-,1]}) = \chi_{[21,-,1]}+\chi_{[1^2,1,1]}+\chi_{[1^3,-,1]}.
\end{equation}
We have already seen above that conjugation with $t$ fixes $\chi_{[1^2,-,1]}$.
Hence we get from from~\eqref{eq:LGP} that
\[
\R{G}{L}(\chi_{[1^2,-,1]})\smash\downarrow^G_P = {}^1\psi_{\chi_{[1^2,-,1]}} + 
{}^s\chi_{[1^2,-,1]}\smash\downarrow^{{}^sP}_{RQ_K}\smash\uparrow_{RQ_K}^P +
\chi_{[1^2,-,1]}\smash\uparrow_L^P.
\]
Via Theorems~\ref{thm:unipotresSO5} and \ref{thm:indnu}~(b),(c) we
can compute the Type~$0$ component of 
${}^s\chi_{[1^2,-,1]}\smash\downarrow^{{}^sP}_{RQ_K}\smash\uparrow_{RQ_K}^P$
and get:
\[
{}^0\psi_{({}^1\psi_{\chi_{[1,-,1]}})} + {}^0\psi_\Sigma = 
{}^0\psi_{({}^1\psi_{\chi_{[1,-,1]}})} + {}^0\psi_{({}^0\psi_{1_{P_3}})} +
{}^0\psi_{({}^1\psi_{\chi_{[1,-,1]}})} + {}^0\psi_{({}^1\psi_{\chi_{[-,1,1]}})}.
\]
The Type~$0$ component of $\chi_{[1^2,-,1]}\smash\uparrow_L^P$ can be
determined via Theorem~\ref{thm:unipotresSO5} and
Proposition~\ref{prop:resSt_SO}. One gets:
${}^0\psi_{({}^1\psi_{\chi_{[1,-,1]}})} + {}^0\psi_{({}^0\psi_{1_{P_3}})} 
+ {}^0\psi_{({}^-\psi_{1_{L_5^-}})}$. 
Together with \eqref{eq:etaHC-1h2m} it follows that the Type~$0$
component of $\chi_{[21,-,1]}\smash\downarrow^G_P +
\chi_{[1^2,1,1]}\smash\downarrow^G_P + \chi_{[1^3,-,1]}\smash\downarrow^G_P$ is:
\[
3 \cdot {}^0\psi_{({}^1\psi_{\chi_{[1,-,1]}})} + 
{}^0\psi_{({}^1\psi_{\chi_{[-,1,1]}})} + 2 \cdot
{}^0\psi_{({}^0\psi_{1_{P_3}})} + {}^0\psi_{({}^-\psi_{1_{L_5^-}})}. 
\]
Since we already know the Type~$0$ component of
$\chi_{[21,-,1]}\smash\downarrow^G_P$ we see that the Type~$0$
component of $\chi_{[1^2,1,1]}\smash\downarrow^G_P + \chi_{[1^3,-,1]}\smash\downarrow^G_P$ is:
\begin{equation} \label{eq:1h21+1h3mtype0}
2 \cdot {}^0\psi_{({}^1\psi_{\chi_{[1,-,1]}})} + 
{}^0\psi_{({}^1\psi_{\chi_{[-,1,1]}})} + 2 \cdot
{}^0\psi_{({}^0\psi_{1_{P_3}})} + {}^0\psi_{({}^-\psi_{1_{L_5^-}})}. 
\end{equation}
Equation~\eqref{eq:1h21+1h3mtype0} implies that
${}^0\psi_{({}^+\psi_{1_{L_5^+}})}$ is no constituent of
$\chi_{[1^2,1,1]}\smash\downarrow^G_P$ and hence it follows
from~\eqref{eq:1h21+11h2type0} that
${}^0\psi_{({}^+\psi_{1_{L_5^+}})}$ is a constituent of 
$\chi_{[1,1^2,1]}\smash\downarrow^G_P$. Hence
\begin{equation} \label{eq:11h2type0}
{}^0\psi_{({}^0\psi_{1_{P_3}})} +
{}^0\psi_{({}^1\psi_{\chi_{[1,-,1]}})} +
{}^0\psi_{({}^1\psi_{\chi_{[-,1,1]}})} +
{}^0\psi_{({}^+\psi_{1_{L_5^+}})}
\end{equation}
is a subcharacter of the Type~$0$ component of $\chi_{[1,1^2,1]}\smash\downarrow^G_P$.
Comparing degrees with Table~\ref{tab:resSO7degs} we see that
\eqref{eq:11h2type0} is the Type~$0$ component of
$\chi_{[1,1^2,1]}\smash\downarrow^G_P$. Now the Type~$0$ components of
$\chi_{[1^2,1,1]}\smash\downarrow^G_P$ and $\chi_{[1^3,-,1]}\smash\downarrow^G_P$
follow from \eqref{eq:1h21+11h2type0} and \eqref{eq:1h21+1h3mtype0}.
This gives the entries in rows 7,8,10 of Table~\ref{tab:unipotresSO7}
and completes the proof of the theorem.
\end{proof}

\begin{remark} \label{rmk:Types+-} \hspace{-0.2cm}
The proof of Theorem~\ref{thm:unipotresSO7} gives partial
information on the Type~$+$ and Type~$-$ components of the
restrictions of the unipotent characters of $G=\SO_7(q)$ to the
maximal parabolic subgroup~$P=P_7$. In particular, we see that the 
restrictions of $[3,-,1]$, $[21,-,1]$, $[1,-,3]$ to $P$ do not
have any constituents of Type~$+$ and that the Type $+$ component of 
$[2,1,1]\smash\downarrow^G_P$ is ${}^+\psi_{1_{L^+}}$. Furthermore,
the restrictions of $[3,-,1]$, $[2,1,1]$, $[-,3,1]$, $[1,2,1]$ to $P$
do not have any constituents of Type~$-$ and the Type $-$ component of 
$[21,-,1]\smash\downarrow^G_P$ is ${}^-\psi_{1_{L^-}}$. 

The constituents of Type $+/-$ are parameterized by irreducible
characters of the groups $\GO_4^\pm(q)$. At present, 
there is only limited information on the irreducible characters of
these groups. The character tables of these groups and the remaining
Type $+/-$ components will be treated in a forthcoming project. 
\end{remark}

\subsection*{Acknowledgements}

We thank Frank L\"ubeck for fruitful discussions and for providing
the unipotent characters of $\SO_5(q)$ and $\SO_7(q)$. 
We also thank Gerhard Hiss for many insightful discussions.


\newcommand{\etalchar}[1]{$^{#1}$}

\end{document}